\definecolor{red1}{RGB}{230,25,75}
\crefname{graph}{Graph}{Graphs}
\newtheorem{theorem}{Theorem}[section]
\newtheorem{proposition}[theorem]{Proposition}
\newtheorem{corollary}[theorem]{Corollary}
\newtheorem{lemma}[theorem]{Lemma}
\newtheorem{open}[theorem]{Open Problem}
\begin{document}
\title{The Structure of Metrizable Graphs}
\author{Maria Chudnovsky\thanks{Princeton University, Princeton, NJ 08544, USA. e-mail: mchudnov@math.princeton.edu.   {Supported by NSF-EPSRC Grant DMS-2120644 and by AFOSR grant FA9550-22-1-0083}} \and{Daniel Cizma\thanks{Einstein Institute of Mathematics, Hebrew University, Jerusalem 91904, Israel. e-mail: daniel.cizma@mail.huji.ac.il.}} \and  {Nati Linial\thanks{School of Computer Science and Engineering, Hebrew University, Jerusalem 91904, Israel. e-mail: nati@cs.huji.ac.il~. {Supported by NSF-BSF US-Israel Grant 2021690 "The Global Geometry of Graphs"}}}}

\maketitle

\begin{abstract}
    A {\em consistent path system} in a graph $G$ is an intersection-closed collection of paths, with exactly one path between any two vertices in $G$. We call $G$ {\em metrizable} if every consistent path system in it is the system of geodesic paths defined by assigning some positive lengths to its edges. We show that metrizable graphs are, in essence, subdivisions of a small family of basic graphs with additional compliant edges. In particular, we show that every metrizable graph with $11$ vertices or more is outerplanar plus one vertex.
\end{abstract}
\section{Introduction}
Let $G=(V,E)$ be
a connected graph, and let $w: E\to \mathbb{R}_{>0}$ be a positive weight function on its edges. This induces a metric on $V$, where the distance between any two vertices is the
least $w$-length of a path between them. What can be said about such a system of geodesics? E.g., what does the collection of $w$-geodesics tell us about $w$? Is it possibly true that {\em every} collection of paths in a graph constitute the system of geodesics corresponding to some graph metric? To simplify matters, suppose that $w$ is such that the shortest path between any two vertices is unique. Clearly, any subpath of a geodesic in $G$ is itself a geodesic. With this in mind, we define the notion of a {\em consistent path system} $\mathcal{P}$ in $G$. This is a collection of paths that is closed under taking subpaths, with a unique $uv$ path in $\mathcal{P}$ for each pair $u,v\in V$. So, we refine the question and ask if every consistent path system coincides with the set of geodesics that corresponds to some positive weight function on the edges. We already know \cite{CL} that 
this is not necessarily so, and here we seek to understand {\em metrizable} graphs. Namely, graphs in which {\em every} consistent path system is induced by some graph metric. For  questions of a similar flavor that come from physics see, e.g., \cite{M}.

The study of metrizable graphs was initiated in \cite{CL} where it was shown that metrizable graphs are in fact quite rare. For example, all large metrizable graphs are planar and not $3$-connected. On the other hand, that paper exhibits an infinite family of metrizable graphs, viz., all outerplanar graphs. In this paper we hone in on the structure of metrizable graphs. 
In particular, we show that every large $2$-connected metrizable graph can be obtained by taking some basic graph, subdividing its edges and iteratively adding edges between vertices connected by flat paths. (Recall a path in $G$ is said to be {\em flat} if every internal vertex in it has degree $2$ in $G$.) For instance, every $2$-connected outerplanar graph can be constructed using this procedure starting with a cycle. Explicitly, if $G$ is outerplanar,
then there exist graphs $G_0,G_1,\dots, G_t=G$ where $G_0$ is an cycle and $G_{i+1} = G_i\cup e_i$ is obtained from $G_i$ by adding an edge $e_i$ between two vertices which are connected by a flat path in $G_i$. We show here that every metrizable graph on $11$ vertices or more can be constructed by this procedure starting with $G_0=K_{2,n}$ or is else a subdivision of $K_{2,3},C_3, K_4, W_4, W_4'$, see \cref{fig:mainTheorem}. This, in particular, implies that every large metrizable graph can be made outerplanar by removing at most one vertex. 

\section{Preliminaries and Overview}
All graphs in this paper are finite and simple.  We recall some definitions and concepts from \cite{CL}. A {\em consistent path system} in a graph $G=(V,E)$ is a collection of paths $\mathcal{P}$ in $G$ satisfying two properties: 
 \begin{enumerate}[1)]
     \item For every $u,v\in V$ there is exactly one $uv$-path in $\mathcal{P}$
     \item Every two paths in $\mathcal{P}$ are either: (i) vertex disjoint, or (ii) have exactly one vertex in common, or (iii) their intersection is a path in $\mathcal{P}$.
 \end{enumerate}
A path system $\mathcal{P}$ of $G=(V,E)$ is {\em metric} if there is a positive weight function \mbox{$w:E\to \mathbb{R}_{>0}$} such that each path in $\mathcal{P}$ is a $w$-shortest path. We call $G$ {\em metrizable} if every consistent path system in $G$ is metric. It is known
\begin{proposition}[\cite{CL}]
The family of metrizable graphs is closed under topological minors.   
\end{proposition}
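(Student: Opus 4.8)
\medskip
\noindent\textbf{Proof plan.}
I plan to isolate three elementary graph operations, show each preserves metrizability, and observe that every topological-minor relation factors through them. The operations are: (i) deleting an edge whose removal leaves the graph connected; (ii) deleting a vertex of degree $1$; and (iii) \emph{suppressing} a vertex $v$ of degree $2$ with non-adjacent neighbours $a,b$, that is, replacing $G$ by $G'=(G-v)+ab$. To see that these suffice, let $H$ be a topological minor of $G$ and fix a subgraph $\widetilde H\subseteq G$ that is a subdivision of $H$; we may assume $H$, hence $\widetilde H$, is connected, a disconnected graph being vacuously metrizable since it admits no consistent path system. One strips $G$ down to $\widetilde H$ as follows: for each vertex $z$ of $G$ not in $\widetilde H$, while $z$ has degree at least $2$ delete one of its incident edges — this keeps the graph connected, since $z$ stays attached to the rest, which contains the connected subgraph $\widetilde H$ — and once $z$ has degree $1$ delete it by (ii); after all such $z$ are gone the current graph has vertex set $V(\widetilde H)$ and contains $\widetilde H$, so one can delete its edges not in $\widetilde H$ one at a time, each deletion keeping the graph connected as it still contains the now-spanning $\widetilde H$. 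This reaches $\widetilde H$, and one finishes by suppressing the subdivision vertices of $\widetilde H$ one by one via (iii) to reach $H$. So it remains to prove that (i), (ii), (iii) preserve metrizability.

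Operations (i) and (ii) are easy, because the consistent path system barely changes. For (i): if $\mathcal P$ is a consistent path system of $G-e$ then, since $V(G-e)=V(G)$ and the defining intersection conditions refer only to $\mathcal P$ itself, $\mathcal P$ is also a consistent path system of $G$; a positive weight function realizing it on $G$ restricts to one realizing it on $G-e$, as every $G-e$-path is a $G$-path and so no path becomes shorter. For (ii): if $z$ has degree $1$ with neighbour $u$, any consistent path system $\mathcal Q$ of $G-z$ extends to one of $G$ by letting the $zx$-path be $Q_{ux}$ with $z$ prepended via the edge $zu$; consistency is immediate (the new paths all begin with $zu$, and two of them meet in $\{z\}$ together with the meet of the corresponding $Q$-paths at $u$), and a realizing weight function on $G$ restricts to one on $G-z$.

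Operation (iii) is the substantive case, and the one I expect to be the main obstacle. Write $G'=(G-v)+ab$. Since $a,b$ are non-adjacent in $G$, there is a correspondence $R\mapsto\widetilde R$ between paths of $G'$ and paths of $G$ joining two vertices $\neq v$: $\widetilde R$ is obtained from $R$ by replacing the edge $ab$, if $R$ uses it, by the length-two path $a,v,b$; this is a bijection, and if $w>0$ on $E(G)$ and we set $w'(ab):=w(va)+w(vb)$ and $w'=w$ elsewhere, then $w'(R)=w(\widetilde R)$. Given a consistent path system $\mathcal Q$ of $G'$, I will build one of $G$ extending $\{\widetilde Q_{xy}:x,y\neq v\}$ by defining, for every $x$, the $vx$-path to be $Q_{ax}$ with $v$ prepended via $va$ if $Q_{ax}$ does not use the edge $ab$, and otherwise $Q_{bx}$ with $v$ prepended via $vb$ (in the second case $Q_{ax}$ begins $a,b,\dots$ and $Q_{bx}$ is its sub-path from $b$). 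Verifying that this collection is a consistent path system is a finite case analysis according to which of the paths $Q_{ax},Q_{bx},Q_{ab},\dots$ traverse the edge $ab$; the recurring point that makes it work is that an endpoint of a $\mathcal Q$-path occurs on it exactly once, so $a$ and $b$ cannot both lie in the interior of a single $\mathcal Q$-path. Granting this, let $w>0$ realize the resulting system on the metrizable graph $G$ and let $w'$ be as above; for any $x,y\neq v$ and any $G'$-path $R$ from $x$ to $y$ one has $w'(R)=w(\widetilde R)\ge w(\widetilde Q_{xy})=w'(Q_{xy})$, so $w'$ realizes $\mathcal Q$ on $G'$, proving $G'$ metrizable. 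Essentially all the work is in the consistency case analysis; everything else is bookkeeping.
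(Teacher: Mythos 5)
The paper does not actually prove this proposition; it is imported from \cite{CL}. So I am judging your argument on its own terms. Your decomposition into edge deletions, pendant-vertex deletions, and suppressions of degree-$2$ vertices is the right skeleton, and your suppression construction (route the $vx$-path through $a$ unless $Q_{ax}$ uses the edge $ab$, in which case route it through $b$; realize $\mathcal Q$ on $G'$ by setting $w'(ab)=w(va)+w(vb)$) is exactly the construction that works. Two points, however, need repair.

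First, the stripping of $G$ down to $\widetilde H$. You delete an edge at a vertex $z\notin V(\widetilde H)$ of degree at least $2$ and assert that connectivity is preserved ``since $z$ stays attached to the rest.'' That does not address whether the \emph{other} side of the deleted edge stays attached, and it can fail: let $G$ be a triangle $abc$ (equal to $\widetilde H$) together with the path $a\,z\,p$. If you process $z$ first, it has degree $2$ and both of its incident edges are bridges, so deleting either one disconnects the graph. You cannot afford disconnected intermediate graphs: a disconnected graph is only vacuously metrizable, none of your three operations ever reconnects a graph, and so the chain of implications would end in a vacuous statement rather than in the metrizability of the connected graph $H$. The fix is to prescribe the order: extend a spanning tree of $\widetilde H$ to a spanning tree $T$ of $G$, repeatedly pick $z\notin V(\widetilde H)$ that is a leaf of $T$ (one exists, e.g.\ a vertex of $V(G)\setminus V(\widetilde H)$ at maximum $T$-distance from $\widetilde H$), delete the non-tree edges at $z$ first (harmless, since $T$ survives), and only then the pendant vertex $z$.

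Second, the consistency of the lifted system $\mathcal P$, which you defer and which you yourself identify as the substantive point. The case analysis does close, but not for the reason you give: your ``recurring point'' that $a$ and $b$ cannot both lie in the interior of a single $\mathcal Q$-path is false --- a path $Q_{xy}$ with $x,y\notin\{a,b\}$ may perfectly well traverse the edge $ab$ internally. The facts actually needed are: (i) if two paths of $\mathcal Q$ share two vertices, their intersection is a common subpath belonging to $\mathcal Q$; consequently a path of $\mathcal Q$ whose two endpoints lie on another path of $\mathcal Q$ is a subpath of it (this gives $Q_{bx}\subseteq Q_{ax}$ when $Q_{ax}$ starts $a,b,\dots$, and identifies the intersections of lifted paths as lifted $\mathcal Q$-paths, e.g.\ it shows $Q_{as}=ab\cup Q_{bs}$ so that the intersection $v\,b\,Q_{bs}$ really is the chosen $P_{vs}$); (ii) no path of $\mathcal Q$ containing both $a$ and $b$ but not the edge $ab$ can meet a path using the edge $ab$ in both $a$ and $b$, since their intersection would have to be a common subpath through $a$ and $b$, which is impossible; and (iii) if $Q_{ax}$ avoids the edge $ab$ while $Q_{ay}$ uses it, then $Q_{ax}\cap Q_{by}=\emptyset$ (intersect $Q_{ax}$ with $Q_{ay}$ to see this), which is what makes $P_{vx}\cap P_{vy}=\{v\}$ when the two paths leave $v$ by different edges. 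With (i)--(iii) every case checks out, so your plan is sound; but as written the argument is not yet a proof, and the one concrete claim you do make about the crucial step is incorrect.
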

\noindent
(Recall that $H$ is a topological minor of $G$ if $G$ contains a subdivision of $H$ as a subgraph.) Therefore, every graph that contains a subdivision of a non-metrizable graph is itself non-metrizable. \\
The length of a path $P$, denoted $|P|$, is the number of its edges. A vertex is called a {\em branch vertex} if it has degree at least $3$. A {\em flat path}, aka a suspended path in $G$ is a path of length at least $2$ all of whose internal vertices are of degree $2$ in $G$. We call an edge $xy$ in $G$ {\em compliant} if $x$ and $y$ are also connected by a flat path. The role
of compliant edges in the present context is captured by the following 
result: 
\begin{proposition}[\cite{CL}]\label{compliant_can_be_deleted}
If $e$ is a compliant edge in $G$, then $G$ is metrizable if and only if $G-e$ is metrizable.
\end{proposition}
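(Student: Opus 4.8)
The plan is to prove the two implications separately; the forward one is essentially free, and all the work lies in the converse. For the forward direction, note that $G-e$ is a subgraph of $G$ — in particular a subdivision of itself sitting inside $G$ — hence a topological minor of $G$; since metrizability passes to topological minors (the preceding proposition), $G$ metrizable forces $G-e$ metrizable, and this half does not even use compliance. For the converse, assume $G-e$ is metrizable, let $\mathcal P$ be a consistent path system of $G$, write $e=xy$, fix a flat path $P$ from $x$ to $y$, and set $C=P\cup\{e\}$, a cycle all of whose vertices other than $x,y$ have degree $2$ in $G$. If some path of $\mathcal P$ uses the edge $e$, then $x,y$ are consecutive on it, so its $x$--$y$ subpath is $e$ itself, and by subpath-closure $e\in\mathcal P$. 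Hence if $e\notin\mathcal P$ then no path of $\mathcal P$ uses $e$, so $\mathcal P$ is a consistent path system of $G-e$, and we finish by metrizing it in $G-e$ and assigning $e$ a weight exceeding the total weight of all other edges, which keeps every $e$-path strictly longer than every path avoiding $e$. So it remains to treat the case $e\in\mathcal P$.

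The idea is now to reroute the $e$-using paths of $\mathcal P$ around $C$ to obtain a consistent path system $\mathcal P'$ of $G-e$, metrize $\mathcal P'$, and lift the metrization back to $G$. Splitting an $e$-using path $Q\in\mathcal P$ as $Q=Q_1+e+Q_2$ at the edge $e$ and propagating along the degree-$2$ vertices of $C$, one sees that each $Q_i$ is either internally disjoint from $C$ or is a sub-arc of $P$ ending at the relevant one of $x,y$; thus $Q$ is of one of three types: (i) $Q_1,Q_2$ both internally disjoint from $C$; (ii) a ``dead-end'' path with an endpoint at an internal vertex $p_a$ of $P$ whose portion inside $C$ is an arc of $P$ from $p_a$ to $x$ (or $y$) followed by $e$; (iii) an arc of $C$ through $e$. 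Define $\phi$ to fix every path of $\mathcal P$ avoiding $e$, to replace $e$ by $P$ on a type-(i) path, and on a type-(ii) or (iii) path to replace the ``arc-of-$P$ plus $e$'' portion by the complementary arc of $C$ (so, e.g., $p_a,\dots,p_1,x,y,Q_2\mapsto p_a,p_{a+1},\dots,p_k,y,Q_2$). Each $\phi(Q)$ is a simple path of $G-e$ with the same endpoints as $Q$, so $\mathcal P':=\phi(\mathcal P)$ has exactly one path per pair; the first main step is to check that $\mathcal P'$ satisfies the intersection axiom, a finite but fiddly case analysis comparing $\phi$-images on $C$ and off $C$ and repeatedly invoking subpath-closure of $\mathcal P$ to recognize intersections as members of $\mathcal P'$. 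The same analysis yields the structural fact we need below: calling a dead-end path \emph{left} or \emph{right} according as its $P$-arc is $P_{[x,p_a]}$ or $P_{[p_a,y]}$, every left-index is strictly smaller than every right-index — otherwise two such paths would intersect in a disconnected subgraph, contradicting the intersection axiom for $\mathcal P$.

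Finally, metrize $\mathcal P'$ by a weight function $w'$ on $E(G-e)$, keep $w=w'$ there, and choose $w(e)>0$. For a path of $\mathcal P$ that avoids $e$ or is of type (i), ``swapping $e$ for $P$'' turns $G$-competitors into $G-e$-competitors, so such a path stays geodesic once $w(e)$ lies in a window determined by $w'$; for a left (respectively right) dead-end path at $p_a$ one further needs $w(e)\le w'(P_{[p_a,y]})-w'(P_{[x,p_a]})$ (respectively the reverse, a lower bound), which with $L_a:=w'(P_{[x,p_a]})$ and $T:=w'(P)$ reads $L_a<T/2$ for left-indices and $L_a>T/2$ for right-indices. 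Since, by the structural fact, all left-indices precede all right-indices, it suffices to choose $w'$ inside the feasibility polytope of $\mathcal P'$ so that the partial sums of the edge-weights along $P$ cross $T/2$ strictly between the two groups of indices; then the resulting system of inequalities on $w(e)$ is feasible and $\mathcal P$ is metric. The hard part will be precisely these two points: (a) verifying that $\mathcal P'$ is a consistent path system and extracting the ``left before right'' fact from the case analysis, and (b) the choice in this last step — a generic metrization of $\mathcal P'$ does \emph{not} extend (this already fails when $P$ has length $2$), so one must genuinely exploit the freedom in the feasibility polytope together with the combinatorial constraint from (a). Of these, (b) is the more delicate, since it turns on understanding which metrizations of $\mathcal P'$ admit a compatible weight on $e$.
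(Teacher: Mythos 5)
The paper does not prove this proposition; it is imported wholesale from \cite{CL}, so there is no in-paper argument to measure you against, and I am judging the proposal on its own terms. Its skeleton is sound: the forward implication via topological-minor closure is correct; the reduction to the case $e\in\mathcal P$ is correct; degree-$2$ propagation along $P$ really does force each half of an $e$-using path to be either internally disjoint from $C$ or an arc of $P$ ending at an endpoint of the path, so your types (i)--(iii) exhaust the cases; and the ``left before right'' fact is true for the reason you give --- a left dead-end at $p_a$ and a right dead-end at $p_b$ with $b\le a$ intersect in a subgraph having the edge $xy$ in one component and the arc $P_{[p_b,p_a]}$ in another, which is neither empty, a single vertex, nor a path.

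The genuine gap is that the final step --- the only place where metrizability of $G-e$ is actually converted into a weight function for $\mathcal P$ --- is not carried out, and the one concrete sufficient condition you state there is wrong. First, ``the partial sums along $P$ cross $T/2$ strictly between the two groups'' does not suffice: a type-(iii) path $p_c\cdots x\,y\cdots p_d$ must beat the arc $P_{[p_c,p_d]}$, which forces $w(e)\le 2(L_d-L_c)-T$ and hence $L_d-L_c>T/2$; this is strictly stronger than $L_c<T/2<L_d$, since both partial sums may sit arbitrarily close to $T/2$. Second, you treat only upper bounds on $w(e)$ (indeed the constraint you call ``a lower bound'' for right dead-ends, $w(e)\le 2L_b-T$, is again an upper bound), but genuine lower bounds exist: every path of $\mathcal P$ that avoids $e$ while a competitor with the same endpoints runs through $e$ imposes one --- e.g.\ if the chosen $xp_j$-path is $P_{[x,p_j]}$, the competitor $e+P_{[y,p_j]}$ forces $w(e)\ge 2L_j-T$. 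The proposition is precisely the assertion that the feasibility polytope of $\mathcal P'$ contains a point at which all of these upper and lower windows for $w(e)$ have a common positive intersection; saying one must ``exploit the freedom in the feasibility polytope'' restates that assertion rather than proving it. A completion would show that weight can be redistributed along $P$ within the polytope so that $P_{[x,p_{a^*}]}$ and $P_{[p_{b^*},y]}$ carry negligible weight (with $a^*$ the largest left and $b^*$ the smallest right index), using the facts that only paths with an endpoint at an internal vertex of $P$ see the individual $P$-edge weights rather than their sum $T$, and that consistency of $\mathcal P$ leaves those constraints slack; that argument, together with the promised but unperformed ``fiddly'' verification that $\mathcal P'$ is consistent, is the actual content of the proof and is missing here.
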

Our main result is that every metrizable graph can be constructed by subdividing some basic graph, 
\cref{fig:mainTheorem}, and adding compliant edges along its subdivided edges.
As usual, $W_4$ is the $4$-wheel. We denote the $4$-wheel plus one edge by $W_4'$. 
\begin{theorem}\label{thm:mainThm}
If a $2$-connected metrizable graph $G$ with at least $11$ vertices has
no compliant edges, then it is either $K_{2,n}$ for some $n\ge 4$ or a subdivision of one of the following: $K_{2,3}$, $K_4$, $W_4$ or $W_4'$.
\end{theorem}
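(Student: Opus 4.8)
The plan is to pass to the \emph{core} of $G$ and then combine the structural results of \cite{CL} with a short list of small non-metrizable graphs. First I would suppress every degree-$2$ vertex of $G$, obtaining a multigraph $H$ of which $G$ is a subdivision. Since every edge of a cycle is compliant, $G$ is not a cycle, so $H$ is loopless, $2$-connected, and of minimum degree at least $3$; moreover $H$ and all of its ``simplifications'' (choosing one representative among parallel edges, or passing to a sub-multigraph) are topological minors of $G$ and hence metrizable, and since $G$ has at least $11$ vertices, by \cite{CL} $G$ is planar and not $3$-connected, so in particular $H$ (a minor of $G$) is planar. Finally, the hypothesis that $G$ has no compliant edge translates into a condition on $H$: between any two vertices one never simultaneously has a length-$1$ edge and a path of length $\ge 2$ (an honest edge parallel to a subdivided one is exactly a compliant edge). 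The goal is then to prove that $H$ is either a dipole with $n\ge 3$ parallel edges — which forces $G$ to be a subdivision of $K_{2,3}$, or to equal $K_{2,n}$ — or one of $K_4$, $W_4$, $W_4'$.

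The heart of the argument is a \emph{reduction lemma}: no $2$-cut of $G$ separates two branch vertices; equivalently, whenever $\{u,v\}$ is a $2$-cut of $G$, one of its two sides is a union of flat paths joining $u$ to $v$. I would prove this by contradiction. Suppose $\{u,v\}$ is a $2$-cut with branch vertices $p$ and $q$ lying strictly on opposite sides. Then the two sides of the corresponding decomposition of $H$ are $2$-connected multigraphs of minimum degree $\ge 3$ on their interiors; using the $2$-connectivity of $G$ one gets a $u$--$v$ path through each side, and splicing these with internally disjoint $u$-- and $v$--paths to $p$ and to $q$ within each side, and recursing into a side when it is larger, one builds a subdivision of one of a small number of non-metrizable graphs — for instance ``$K_{2,4}$ with one subdivided edge,'' or the three-vertex multigraph whose parallel multiplicities are $(2,2,1)$, each of which is non-metrizable by \cite{CL} or by a direct check. \textbf{This case analysis is the main obstacle}: one must split on the sizes of the two sides, on the multiplicities of the edges incident with $u$ and $v$, and on how $p$ and $q$ attach to $\{u,v\}$, and verify in every case that the configuration genuinely forces a forbidden subdivision rather than a metrizable one.

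Granting the reduction lemma, the rest is quick. If $H$ has at most two vertices it is a dipole $D_n$; since $G$ is simple some parallel path is subdivided, so by the no-compliant-edge condition every parallel path has length $\ge 2$, and $n\ge 3$ (a dipole with $n\le 2$ yields a path or a cycle, which is impossible). Thus $G$ is a subdivision of $K_{2,3}$ when $n=3$; and when $n\ge 4$, if any path had length $\ge 3$ then $G$ would contain a subdivision of ``$K_{2,4}$ with one subdivided edge,'' a non-metrizable graph, so every path has length exactly $2$ and $G=K_{2,n}$. If $H$ has at least four vertices then, since $H$ has minimum degree $\ge 3$, any $2$-cut of $H$ would separate branch vertices, so the reduction lemma makes $H$ $3$-connected; its underlying simple graph is then $3$-connected, planar, and metrizable, hence has at most $10$ vertices by \cite{CL}, and a finite check over the $3$-connected planar graphs on at most $10$ vertices — using that $K_5$, $K_{3,3}$, $W_5$, the prism, the octahedron, the cube, ``$K_4$ with a parallel edge,'' and so on all contain a forbidden subdivision or become non-metrizable after subdividing a single edge — leaves exactly $K_4$, $W_4$, and $W_4'$. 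The only remaining case, $|V(H)|=3$, is handled directly: every such multigraph contains a subdivision of the $(2,2,1)$ multigraph and is therefore non-metrizable. Combining the cases proves the theorem.
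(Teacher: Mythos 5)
Your outline reroutes the proof through a different structural statement than the paper uses, but the two places where all of the actual difficulty lives are left unexecuted, so as it stands this is a plan rather than a proof. The ``reduction lemma'' (no $2$-cut of $G$ separates two branch vertices) is not a routine step you can defer: it is essentially equivalent in depth to \cref{lem:disjointCycles}, which is the paper's main engine and which consumes most of the paper --- its proof requires locating, in every configuration, a subdivision of one of the sixteen computer-certified non-metrizable graphs of \cref{fig:zoo}, each of which needs an explicit path system and an LP-duality certificate. Saying that splicing paths through the two sides of the cut ``builds a subdivision of one of a small number of non-metrizable graphs'' without specifying that list, proving each member non-metrizable, and carrying out the splicing in every subcase (sizes of the sides, how $p$ and $q$ attach to $\{u,v\}$, whether the cycles you find in the two sides are disjoint or share $u,v$) is precisely the content you would need to supply. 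You flag this yourself as ``the main obstacle,'' and it is: without it nothing downstream is established.

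The second gap is the endgame. A ``finite check over the $3$-connected planar graphs on at most $10$ vertices'' is not something one can actually write down --- there are thousands of such graphs, and for each one you must show not merely that it is non-metrizable but that every subdivision of it on at least $11$ vertices with no compliant edges is non-metrizable (and, since $H$ is a multigraph, classifying its underlying simple graph does not yet forbid parallel edges sitting on top of $K_4$, $W_4$ or $W_4'$). You are also leaning on a quantitative form of ``large $3$-connected metrizable graphs do not exist'' ($\le 10$ vertices) that is not the statement quoted from \cite{CL}. The paper sidesteps all of this: after disposing of two disjoint cycles via \cref{lem:disjointCycles}, it invokes Lov\'asz's classification (\cref{lem:Lovasz}) of graphs with no two disjoint cycles and minimum degree $3$, which pins the suppressed graph $\tilde G$ down to $K_5$, wheels, and the $K_{3,n}$ variants with no enumeration at all; the parallel-flat-path (multigraph) case is then handled by a separate argument culminating in a $K_{2,4}'$ subdivision and \cref{lem:K24}. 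If you want to pursue your route, you must either prove the reduction lemma with the same level of detail as the paper's \cref{lem:disjointCycles}, or find a classification theorem (as the paper does with Lov\'asz) that replaces the brute-force enumeration.
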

\begin{figure}
    \centering
    	\begin{minipage}[c]{0.12\textwidth}
		\centering
	\begin{tikzpicture}[scale=0.4, every node/.style={scale=0.4}]
		\def\vtxSize{.4cm}
		\def\edgeWidth{1pt}
		\def\radius{2.5cm}
		
		\node[draw,circle,minimum size=\vtxSize,inner sep=1pt,fill, color=black] (1) at (-2,0) [scale=1] {};
		\node[draw,circle,minimum size=\vtxSize,inner sep=1pt,fill, color=black] (2) at (0,0) [scale=1]{};
		\node[draw,circle,minimum size=\vtxSize,inner sep=1pt,fill, color=black] (3) at (2,0)[scale=1] {};
		\node[draw,circle,minimum size=\vtxSize,inner sep=1pt,fill, color=black] (4) at (0,-2)[scale=1] {};
		\node[draw,circle,minimum size=\vtxSize,inner sep=1pt,fill, color=black] (5) at (0,2)[scale=1] {};

		\draw [line width=\edgeWidth,-] (1) -- (4);
		\draw [line width=\edgeWidth,-] (2) -- (4);
		\draw [line width=\edgeWidth,-] (3) -- (4);
		\draw [line width=\edgeWidth,-] (1) -- (5);
		\draw [line width=\edgeWidth,-] (2) -- (5);
		\draw [line width=\edgeWidth,-] (3) -- (5);

	\end{tikzpicture}
	\end{minipage}
        \hspace{5mm}
		\begin{minipage}[c]{0.12\textwidth}
			\centering
		\begin{tikzpicture}[scale=0.4, every node/.style={scale=0.4}]
			\def\vtxSize{.4cm}
			\def\edgeWidth{1pt}
			\def\radius{2.5cm}
			
			\node[draw,circle,minimum size=\vtxSize,inner sep=1pt,fill, color=black ](1) at (-0*360/3 +90: \radius) [scale=1] {};
			\node[draw,circle,minimum size=\vtxSize,inner sep=1pt,fill, color=black] (2) at (-1*360/3 +90: \radius) [scale=1]{};
			\node[draw,circle,minimum size=\vtxSize,inner sep=1pt,fill, color=black] (3) at (-2*360/3 +90: \radius)[scale=1] {};

			\draw [line width=\edgeWidth,-] (1) -- (2);
			\draw [line width=\edgeWidth,-] (2) -- (3);
			\draw [line width=\edgeWidth,-] (1) -- (3);

		\end{tikzpicture}
	\end{minipage}
        \hspace{5mm}
	\begin{minipage}[c]{0.12\textwidth}
		\centering
	\begin{tikzpicture}[scale=0.4, every node/.style={scale=0.4}]
		\def\vtxSize{.4cm}
		\def\edgeWidth{1pt}
		\def\radius{2.5cm}
		
		\node[draw,circle,minimum size=\vtxSize,inner sep=1pt,fill, color=black ](1) at (-0*360/3 +90: \radius) [scale=1] {};
		\node[draw,circle,minimum size=\vtxSize,inner sep=1pt,fill, color=black] (2) at (-1*360/3 +90: \radius) [scale=1]{};
		\node[draw,circle,minimum size=\vtxSize,inner sep=1pt,fill, color=black] (3) at (-2*360/3 +90: \radius)[scale=1] {};
		\node[draw,circle,minimum size=\vtxSize,inner sep=1pt,fill, color=black] (4) at (0,0)[scale=1] {};

		\draw [line width=\edgeWidth,-] (1) -- (2);
		\draw [line width=\edgeWidth,-] (1) -- (3);
		\draw [line width=\edgeWidth,-] (1) -- (4);
		\draw [line width=\edgeWidth,-] (2) -- (3);
		\draw [line width=\edgeWidth,-] (2) -- (4);
		\draw [line width=\edgeWidth,-] (3) -- (4);
		
	\end{tikzpicture}
\end{minipage}
\hspace{5mm}
\begin{minipage}[c]{0.12\textwidth}
	\centering
\begin{tikzpicture}[scale=0.4, every node/.style={scale=0.4}]
	\def\vtxSize{.4cm}
	\def\edgeWidth{1pt}
	\def\radius{2.5cm}
	
	\node[draw,circle,minimum size=\vtxSize,inner sep=1pt,fill, color=black ](1) at (-0*360/4 +45 : \radius) [scale=1] {};
	\node[draw,circle,minimum size=\vtxSize,inner sep=1pt,fill, color=black] (2) at (-1*360/4 +45: \radius) [scale=1]{};
	\node[draw,circle,minimum size=\vtxSize,inner sep=1pt,fill, color=black] (3) at (-2*360/4 +45: \radius)[scale=1] {};
	\node[draw,circle,minimum size=\vtxSize,inner sep=1pt,fill, color=black] (4) at (-3*360/4 +45: \radius)[scale=1] {};
	\node[draw,circle,minimum size=\vtxSize,inner sep=1pt,fill, color=black] (5) at (0,0)[scale=1] {};

	\draw [line width=\edgeWidth,-] (1) -- (2);
	\draw [line width=\edgeWidth,-] (1) -- (4);
	\draw [line width=\edgeWidth,-] (1) -- (5);
	\draw [line width=\edgeWidth,-] (2) -- (3);
	\draw [line width=\edgeWidth,-] (2) -- (5);
	\draw [line width=\edgeWidth,-] (3) -- (4);
	\draw [line width=\edgeWidth,-] (3) -- (4);
	\draw [line width=\edgeWidth,-] (3) -- (5);
	\draw [line width=\edgeWidth,-] (4) -- (5);
	
\end{tikzpicture}
\end{minipage}
\hspace{5mm}
\begin{minipage}[c]{0.12\textwidth}
	\centering
\begin{tikzpicture}[scale=0.4, every node/.style={scale=0.4}]
	\def\vtxSize{.4cm}
	\def\edgeWidth{1pt}
	\def\radius{2.1cm}
	
	\node[draw,circle,minimum size=\vtxSize,inner sep=1pt,fill, color=black](1) at (0*360/5 +162 : \radius) [scale=1] {};
	\node[draw,circle,minimum size=\vtxSize,inner sep=1pt,fill, color=black] (2) at (1*360/5 +162: \radius) [scale=1]{};
	\node[draw,circle,minimum size=\vtxSize,inner sep=1pt,fill, color=black] (3) at (2*360/5 +162: \radius)[scale=1] {};
	\node[draw,circle,minimum size=\vtxSize,inner sep=1pt,fill, color=black] (4) at (3*360/5 +162: \radius)[scale=1] {};
	\node[draw,circle,minimum size=\vtxSize,inner sep=1pt,fill, color=black] (5) at (4*360/5 +162: \radius)[scale=1] {};

	\draw [line width=\edgeWidth,-] (1) -- (2);
	\draw [line width=\edgeWidth,-] (1) -- (3);
	\draw [line width=\edgeWidth,-] (1) -- (4);
	\draw [line width=\edgeWidth,-] (1) -- (5);
	\draw [line width=\edgeWidth,-] (2) -- (4);
	\draw [line width=\edgeWidth,-] (2) -- (5);
	\draw [line width=\edgeWidth,-] (3) -- (4);
	\draw [line width=\edgeWidth,-] (3) -- (5);
	\draw [line width=\edgeWidth,-] (4) -- (5);
	
	\end{tikzpicture}
\end{minipage}
    \caption{Up to adding compliant edges, metrizable graphs are either $K_{2,n}$ or subdivisions of one of the above graphs.}
    \label{fig:mainTheorem}
\end{figure}
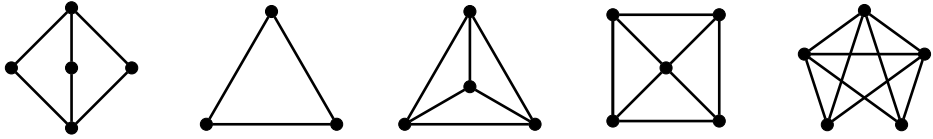
\noindent
The proof of this theorem builds on the very restricted structure of metrizable graphs.
\begin{theorem}\label{lem:disjointCycles}
If a graph $G$ with at least $11$ vertices is
(i) $2$-connected, (ii) has no compliant edges, (iii) has at least two disjoint cycles,
then $G$ is non-metrizable.
\end{theorem}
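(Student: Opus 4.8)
The strategy is to locate in $G$ a subdivision of a graph already known to be non-metrizable; since the class of metrizable graphs is closed under topological minors, this suffices. The prototypical obstruction here is a \emph{double cycle}: two disjoint cycles joined by two vertex-disjoint paths, in a form that carries no compliant edge. Graphs of this shape — for instance the triangular prism $C_3\,\square\,K_2$, or two disjoint $4$-cycles joined at antipodal vertices — can be shown non-metrizable by exhibiting an explicit consistent path system that is not realized by any edge weighting (this is the kind of obstruction isolated in \cite{CL}).

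First I would fix two disjoint cycles $C_1,C_2$ minimizing $|C_1|+|C_2|$; minimality makes them induced. By $2$-connectivity and Menger's theorem, $G$ contains two vertex-disjoint $C_1$--$C_2$ paths; choose them internally disjoint from $C_1\cup C_2$ and minimizing total length, and call them $P$ (from $a\in C_1$ to $c\in C_2$) and $Q$ (from $b\in C_1$ to $d\in C_2$). Because $C_1$ and $C_2$ are disjoint, automatically $a\neq b$ and $c\neq d$, so $H_0:=C_1\cup C_2\cup P\cup Q$ is a subdivision of the multigraph on $\{a,b,c,d\}$ with a double edge $ab$ (the two arcs of $C_1$), a double edge $cd$ (the two arcs of $C_2$), and single edges $ac$ and $bd$. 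Two consequences of the hypotheses are then extracted. From ``no compliant edge'': if $C_i$ is a triangle, then its vertex off $P\cup Q$ has degree $\ge 3$ in $G$ — otherwise the opposite edge of the triangle is compliant — so that vertex carries an $H_0$-bridge; more generally, no length-$1$ arc of $H_0$ may have its endpoints joined by a flat path of $G$. From ``$|V(G)|\ge 11$'': either some arc of $H_0$ is long, or $G$ strictly contains $H_0$ and hence, being $2$-connected, has an $H_0$-bridge.

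The core is a case analysis driven by an $H_0$-bridge $B$ (one exists by the previous paragraph) according to where its feet lie. If both feet are interior to a single arc, re-routing through $B$ produces a cycle still disjoint from the other $C_i$ — contradicting the minimality of $|C_1|+|C_2|$ — or else a $K_4$-subdivision confined to one ``side''. If the feet lie on two different arcs of the same $C_i$, then $B$ is a chord-path of $C_i$; together with $P$, $Q$ and $C_{3-i}$ this completes a subdivision of the prism or of a larger double-cycle obstruction. If the feet meet both cycles, or meet $P\cup Q$, then $B$ supplies a third vertex-disjoint rung between $C_1$ and $C_2$, giving a prism subdivision outright. In every branch $G$ contains a subdivision of a non-metrizable graph, so $G$ is non-metrizable.

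The main obstacle is the fully degenerate regime: $C_1$ and $C_2$ are both triangles and every $H_0$-bridge is short, so each structure produced above is compliant-reducible and thus — before the hypotheses are invoked — potentially metrizable. Here the argument must use ``no compliant edge'' decisively: the apex bridges forced on the two triangles have their own feet, and $2$-connectivity compels the resulting paths to close up into further cycles or rungs; one then chases these attachments a bounded number of steps and shows the configuration either completes a prism or double-cycle subdivision, or violates $2$-connectivity, the minimality of $C_1\cup C_2$, or the absence of compliant edges. It is precisely the bound $|V(G)|\ge 11$ that guarantees enough room for this chase to terminate in an obstruction rather than collapsing into one of the small metrizable graphs of \cref{fig:mainTheorem}; turning the chase into a finite, rigorous enumeration of configurations is the technical heart of the proof.
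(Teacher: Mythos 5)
Your overall strategy---reduce to a subdivision of a known non-metrizable graph by fixing two disjoint cycles, two disjoint connecting paths, and then case-analysing where the bridges of the resulting subgraph attach---is indeed the skeleton of the paper's argument. But the proposal has a genuine gap: it never supplies the obstructions that the terminal configurations actually require, and it explicitly defers the decisive part (``turning the chase into a finite, rigorous enumeration of configurations is the technical heart'') rather than carrying it out. The paper's proof closes every branch by exhibiting a subdivision of one of about ten \emph{specific} computer-certified non-metrizable graphs from \cref{fig:zoo} (e.g.\ \cref{fig:graph1,fig:graph2,fig:graph4,fig:graph5,fig:graph10,fig:graph13,fig:graph14,fig:graph16}), together with \cref{lem:K24} ($K_{2,4}'$) and the prism result from \cite{CL}. ``A prism or a larger double-cycle obstruction'' does not cover these: in the fully reduced Case 1 (two $4$-cycles joined by two edges at antipodal vertices), an ear from $x_1$ to $u_2$ yields \cref{fig:graph2}, not a prism; short ears $u_izv_i$ hung on the $4$-cycles force \cref{fig:graph10} or \cref{fig:graph16} and need the count of three extra vertices supplied by $|V(G)|\ge 11$; and the subcase of Case 3 with crossing attachments needs the $K_{2,4}'$ lemma. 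Each of these graphs needs its own non-metrizability certificate, which your argument does not provide or cite.

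Two of your shortcuts are also unsound as stated. First, a bridge with both feet interior to a single arc does not contradict minimality of $|C_1|+|C_2|$: the cycle it creates need not be shorter, and the paper instead treats long arcs directly by producing a subdivision of \cref{fig:graph14} or \cref{fig:graph13}. Second, and more seriously, your analysis assumes an $H_0$-bridge with usable feet always exists, but when the two attachment vertices $x_2,y_2$ of $C_2$ \emph{separate} $C_2$ from the rest of $G$, no such bridge reaches across; the paper handles this by proving that the piece hanging off $\{x_2,y_2\}$ is $2$-connected and non-outerplanar (\cref{lem:compliantOuterplanar}, which is where ``no compliant edges'' enters structurally) and then extracting a $K_4$- or $K_{2,3}$-subdivision through $x_2,y_2$ (\cref{lem:nonOuterStucture}) to manufacture a second cycle and recurse into an earlier case. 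Your proposal has no substitute for this mechanism, so the separation subcases of Cases 2 and 3 are not addressed at all.
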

The basic method for showing that a graph $G=(V,E)$ is non-metrizable,
was developed in \cite{CL}: Namely, we
consider a consistent path system $\mathcal{P}$ in $G$. 
Associated with $\mathcal{P}$ is a system
of linear inequalities, and $\mathcal{P}$ is
metric iff this system is feasible. So if the chosen $\mathcal{P}$
is non-metric, we can use
LP-duality to create a {\em hand-checkable certificates} of this.
Thus, using a computer, we created
a ``zoo'' of 16 non-metrizable graphs along with such path systems and the
corresponding certificates, \cref{fig:zoo} and 
\cref{append:certificates}. We will refer throughout to graphs from the zoo as
7a-7p, as they appear in \cref{fig:zoo}. Our proof of \cref{lem:disjointCycles} 
shows that any graph satisfying these conditions
contains a subdivision of some graph from the zoo.
Our argument splits according to whether $G$ contains two disjoint cycles or not.
If it does, then we use these cycles and paths between them
to find subdivisions of zoo graphs. On the other hand,
as shown by Lov\'asz (\cref{lem:Lovasz}), graphs with no two disjoint cycles
have a very restricted structure. This allows us to derive
a proof of \cref{thm:mainThm} from \cref{lem:disjointCycles}.
Finally, we show that every metrizable graph is nearly outerplanar.
\begin{corollary}\label{cor:outerplanar}
Every $2$-connected metrizable graph with at least $11$ vertices can be made outerplanar by removing at most one vertex.
\end{corollary}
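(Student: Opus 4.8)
The plan is to peel off the compliant edges of $G$, apply \cref{thm:mainThm} to what remains, and then check that reattaching the compliant edges spoils nothing. Throughout I would use the following fact $(\ast)$, which is exactly what keeps the ``add an edge across a flat path'' construction of the introduction inside the class of outerplanar graphs: if $H$ is outerplanar and $xy$ is an edge whose two endpoints are joined by a flat path in $H$, then $H+xy$ is outerplanar too. Now delete compliant edges from $G$ one at a time, obtaining $G=H_0\supsetneq H_1\supsetneq\dots\supsetneq H_t=G^*$; by \cref{compliant_can_be_deleted} every $H_i$ is metrizable and $G^*$ has no compliant edge. Deleting a compliant edge $e=xy$ from a $2$-connected graph $H$ cannot create a cut vertex unless $H$ is already a cycle: if $Q$ is a flat path from $x$ to $y$, its interior has degree $2$, so $H$ is the union of the cycle $C=Q\cup\{e\}$ with a subgraph $R$ meeting $C$ only in $\{x,y\}$; either $R$ is empty and $H=C$, or $2$-connectivity of $H$ forces $R$ to contain an $x$--$y$ path, and then $H-e$ is again $2$-connected. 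Hence either some $H_j$ is a cycle --- and then $G$, being obtainable from $H_j$ by successively adding compliant edges, is itself outerplanar by repeated use of $(\ast)$, so no vertex need be deleted --- or no $H_j$ is a cycle, in which case all the $H_i$, and in particular $G^*$, are $2$-connected.

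Assume the latter. Then $G^*$ is $2$-connected with at least $11$ vertices and no compliant edge, so \cref{thm:mainThm} gives that $G^*$ is $K_{2,n}$ for some $n\ge 4$, or a subdivision of $K_{2,3}$, $K_4$, $W_4$, or $W_4'$. In each case pick a branch vertex $v^*$ of $G^*$: a vertex of the part of size two for $K_{2,n}$ and $K_{2,3}$, the hub for $W_4$ and $W_4'$, any vertex for $K_4$; in particular $\deg_{G^*}(v^*)\ge 3$. Deleting $v^*$ turns $K_{2,n}$ into a star, a subdivision of $K_{2,3}$ into a tree, and a subdivision of $K_4$ (respectively $W_4$, $W_4'$) into a subdivided triangle (respectively $C_4$, $K_4-e$) together with some pendant paths; each of these is outerplanar, so $G^*-v^*$ is outerplanar.

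Now reinsert the deleted edges in reverse order $e_t,\dots,e_1$ and track the graphs $H_i-v^*$, starting from the outerplanar graph $H_t-v^*=G^*-v^*$. When $e_i=x_iy_i$ is put back it is a compliant edge of $H_{i-1}=H_i+e_i$, so $x_i$ and $y_i$ are joined in $H_{i-1}$ by a flat path $Q_i$; since $|Q_i|\ge 2$ the edge $e_i$ is not an edge of $Q_i$, so $Q_i\subseteq H_i$ and the interior of $Q_i$ has degree $2$ in $H_i$ as well, while $\deg_{H_{i-1}}(v^*)\ge \deg_{G^*}(v^*)\ge 3$, so $v^*$ is not an interior vertex of $Q_i$. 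If $v^*\in e_i$ then $H_{i-1}-v^*=H_i-v^*$, which is outerplanar by induction; otherwise $Q_i$ avoids $v^*$ and is still a flat path in $H_i-v^*$, so $H_{i-1}-v^*=(H_i-v^*)+e_i$ is outerplanar by $(\ast)$. Running $i$ down to $0$ yields that $G-v^*=H_0-v^*$ is outerplanar, as desired.

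The genuinely new ingredient is $(\ast)$, and proving it cleanly is where I expect the main work to be. The intended argument is a rerouting: fix an outerplanar embedding of $H$; the degree-$2$ interior vertices of the flat path $Q$ all lie on the outer face, so $Q$ occupies a contiguous arc of its boundary. If $H+xy$ had a $K_4$- or $K_{2,3}$-minor using $xy$, then examining its branch sets --- the interior of $Q$ is split into sub-paths among them, a branch set contained in the interior of $Q$ has at most two neighbours, and the branch sets of $x$ and $y$ can be extended along $Q$ to recover the adjacency that $xy$ provided --- one produces the forbidden minor already in $H$, a contradiction. This is exactly the statement that makes the ``add compliant edges'' procedures of the introduction (starting from a cycle, or from $K_{2,n}$) generate outerplanar, respectively near-outerplanar, graphs, so it is natural to isolate it and reuse it here.
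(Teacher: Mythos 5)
Your proposal is correct and follows essentially the same route as the paper: strip compliant edges, apply \cref{thm:mainThm} to the residue, delete a branch vertex, and re-add the compliant edges using the facts that compliance survives deletion of a degree-$\ge 3$ vertex not incident to the edge and that adding a compliant edge preserves outerplanarity. The only remarks worth making are that your fact $(\ast)$ is exactly part 2 of \cref{prop:compliantEdges} and your $2$-connectivity argument is part 3, so neither needs to be reproved, and that your treatment of the $W_4'$ case (removing the hub works whether or not the chord is subdivided) is, if anything, more direct than the paper's detour through $K_{2,4}'$.
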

This corollary follows from \cref{thm:mainThm} and the fact that outerplanarity is 
preserved under the addition of compliant edges (\cref{compliant_can_be_deleted}). Note that the statement of \cref{cor:outerplanar}
need not hold for smaller graphs. For example, $K_6$ is metrizable but can be made 
outerplanar only by removing $3$ vertices. 

This paper is organized as follows. In section 3 we prove several lemmas
needed for our main proofs. In section 4 we prove our main results: \cref{lem:disjointCycles}, \cref{thm:mainThm} and \cref{cor:outerplanar}. 
Section 5 is devoted to open questions.
Following section 5 is a figure with our zoo of non-metrizable graphs, and
appendix A contains certificates of their non-metrizability.
\section{Lemmas}

We begin with some relevant preliminary facts:
\begin{proposition}
\label{prop:compliantEdges}
Let $G=(V,E)$ be a graph which is not a cycle.
\begin{enumerate}
    \item If $G$ is $2$-connected and outerplanar then it has a compliant edge. In particular, $G$ has at least two edge disjoint flat paths whose endpoints span a compliant edge.\label{part:compliant}
    \item If $G-e$ is outerplanar for some compliant edge $e \in E(G)$, then $G$ is outerplanar.\label{part:outerplanar}
    \item If $G$ is $2$-connected and $e \in E(G)$ is a compliant edge, then $G-e$ is $2$-connected.\label{part:2conn}

\end{enumerate}
\end{proposition}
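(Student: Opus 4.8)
All three parts revolve around the interplay between the edge $e=xy$ and a flat path $Q$ joining $x$ to $y$, together with two standard facts about a $2$-connected outerplanar graph $G$: it has a unique Hamiltonian cycle $H$, which bounds the outer face in any outerplane embedding, so that every other edge is a chord drawn inside $H$ and no two chords cross; and its ``weak dual'' $T$ (one node per bounded face, adjacent when two faces share an edge) is a tree. I will use these freely.

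For part~\ref{part:compliant}, since $G$ is $2$-connected, outerplanar, and not a cycle it has at least one chord, so $T$ has at least two nodes, hence at least two leaves. A leaf face $f$ of $T$ shares an edge with exactly one other bounded face; since $\partial f$ is a cycle, it consists of that one shared edge — a chord $c=x'y'$ — together with an arc $P$ of $H$ from $x'$ to $y'$. I will check that every internal vertex of $P$ has degree $2$ in $G$: a chord at such a vertex would either cross $c$ or would make the outer face incident to a chord, both impossible. Moreover $|P|\ge 2$, since $|P|=1$ would turn $c$ into a parallel edge. Thus $P$ is a flat path whose endpoints span the compliant edge $c$. Picking two distinct leaves of $T$ yields flat paths $P_1,P_2$; they are edge-disjoint because each edge of $P_i$ lies on the boundary of the bounded face $f_i$, and each edge of $H$ bounds a unique bounded face.

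For part~\ref{part:2conn}, let $Q$ be a flat path from $x$ to $y$; it has length $\ge 2$, hence does not use $e$, so $Q\subseteq G-e$ and its internal vertices still have degree $2$ in $G-e$. In particular $x$ and $y$ are joined in $G-e$, so $G-e$ is connected. Suppose for contradiction that $z$ is a cut vertex of $G-e$. Since $(G-e)-x=G-x$ is connected (and likewise for $y$) we get $z\notin\{x,y\}$; since $G-z$ is connected but $(G-e)-z$ is not, $z$ separates $x$ from $y$ in $G-e$, and $(G-e)-z$ has exactly two components $A\ni x$ and $B\ni y$. If $z$ were not on $Q$, then $Q$ would connect $A$ to $B$ in $(G-e)-z$, a contradiction; so $z$ is an internal vertex of $Q$, hence $\deg_G z=2$ and its two neighbours $a,b$ — the neighbours of $z$ on $Q$ — lie in $A$ and $B$ respectively. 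Now the only edges joining $A$ to the rest of $G$ sit at $x$ (namely $e$) and at $a$ (namely $za$); since every internal vertex of the $x$-to-$a$ subpath of $Q$ has degree $2$ in $G$, any vertex of $A$ off this subpath would hang off $x$ alone, making $x$ a cut vertex of $G$ — impossible. So $A$ is precisely the $x$-to-$a$ subpath of $Q$; symmetrically $B$ is the $b$-to-$y$ subpath, and then $G=Q\cup\{e\}$ is a cycle, contrary to hypothesis. Hence $G-e$ has no cut vertex, and being connected with at least $3$ vertices, it is $2$-connected.

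For part~\ref{part:outerplanar}, I will argue block by block, using that a graph is outerplanar iff each of its blocks is; I avoid ``suppressing'' $Q$ because subdividing an edge of an outerplanar graph can create a $K_{2,3}$ and destroy outerplanarity. Let $B$ be the block of $G$ containing $e$: it is $2$-connected (as $e$ is not a bridge of $G$) and contains $Q$ (since $Q\cup\{e\}$ is a cycle). Every block of $G$ other than $B$ is a subgraph of $G-e$, hence outerplanar, so it suffices to show $B$ is outerplanar. If $B$ is a cycle we are done; otherwise $B-e$ is outerplanar (a subgraph of $G-e$) and, by part~\ref{part:2conn} applied to $B$, it is $2$-connected. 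Its Hamiltonian cycle $H$ contains all edges of $Q$ — each internal vertex of $Q$ has degree $2$ in $B-e$, forcing both its edges onto $H$ — so $Q$ is an arc of $H$ and no chord of $B-e$ has an endpoint in the interior of that arc; consequently the edge $xy$ can be added as a chord inside $H$ without crossings, yielding an outerplane embedding of $B$. The main obstacle is part~\ref{part:2conn}: the bookkeeping of the components $A,B$ and the repeated appeals to the $2$-connectivity of $G$ must be carried out carefully, and this is the lemma that makes the block reduction in part~\ref{part:outerplanar} go through.
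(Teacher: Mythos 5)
Your proof is correct, but all three parts take routes genuinely different from the paper's. For part 1 the paper inducts on the number of chords of the Hamiltonian cycle, maintaining the two flat paths as chords are added back; your weak-dual argument extracts the same two edge-disjoint flat paths directly from two leaf faces, at the price of invoking the (standard, but here unproved) fact that the weak dual is a tree, and your justification that an internal vertex of the leaf arc carries no chord (``would make the outer face incident to a chord'') is the one loosely worded spot --- the clean dichotomy is that such a chord either crosses $c$ or lies inside the face $f$, contradicting that $f$ is a face. For part 3 the paper takes a vertex $z$ off the cycle $P\cup e$, fans it to $\{x,y\}$, and exhibits two internally disjoint $xy$-paths in $G-e$; your direct cut-vertex analysis (a cut vertex must be internal to the flat path and forces $G$ to be the cycle $Q\cup e$) is longer but fully self-contained, whereas the paper leaves implicit the standard step that a cut vertex of $G-e$ would have to separate $x$ from $y$. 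The largest divergence is part 2: the paper argues by contradiction with the forbidden-subdivision characterization, showing that a subdivided $K_4$ or $K_{2,3}$ in $G$ either survives the replacement of $e$ by $P$ (contradicting outerplanarity of $G-e$) or must contain the branch-vertex-free cycle $P\cup e$, which is impossible; you instead reduce to the block of $e$, apply your part 3 to that block, observe that the Hamiltonian cycle of $B-e$ contains $Q$ as a chord-free arc, and insert $xy$ as a non-crossing chord. Your route makes part 2 depend on part 3 (the paper's does not) and on the equivalence ``outerplanar iff every block is outerplanar,'' but in exchange it constructively produces an outerplane embedding rather than excluding obstructions.
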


\begin{proof}
\begin{enumerate}
\item
The edge set of
every $2$-connected outerplanar graph has the form $C\sqcup M$, with $C$ 
a Hamiltonian cycle and $M$ cyclically non-crossing edges. 
We argue by induction on $|M|$. 
If $|M|=1$ then $G$ is a cycle with one additional edge $xy$ which splits $C$ into two edge 
disjoint flat $xy$ paths. For the induction step, pick an edge $xy \in M$. 
By the induction hypothesis $G-xy$ has two edge disjoint flat paths $P_1$ and and $P_2$ whose endpoints span a compliant edge. If neither $x$ nor $y$ is an internal vertex of $P_1$ or $P_2$ then the same two paths work for $G$ as well.

Otherwise, say $x$ is an internal vertex of $P_1$, a flat path in $G-e$ between $u$ and $v$,
where, by assumption $uv \in M$.
We conclude that also $y$ belongs to $P_1$, for otherwise
$C \cup uv \cup xy$ forms a subdivided $K_4$, contrary to $G$ being outerplanar. 
Now consider the $xy$ subpath $P_1'$ of $P_1$.
Clearly, $P_1'$ is a flat path in $G$ whose ends span the compliant edge $xy$.
The paths $P_1'$ and $P_2$ satisfy the claim.
\item  
Suppose toward a contradiction that $G$ is not outerplanar, while $G-e$ is, where $e=xy$
is a compliant edge
that connects between the two ends of the flat path $P$. Since $G$ is not outerplanar,
it must have a subgraph $H$ that is either a subdivided $K_4$
or a subdivided $K_{2,3}$, and since $G-e$ is outerplanar, $e$ must belong to $E(H)$.
If $x,y$ are the only common vertices of $P$ and $H$, then by removing the edge $e$
from $H$ and adding instead the path $P$, we obtain a subdivision of $H$,
which is impossible, since $G-e$ is outerplanar.

But $P$ is flat and has no branch vertices other that possibly $x,y$, so that if $P$ and $H$
have a non-trivial intersection, i.e. $H$ contains an internal vertex of $P$, then necessarily $P\subseteq H$.
The cycle $C = P\cup xy$ is contained in $H$. If $H$ is a subdivided $K_4$, then each of its
cycles has $3$ branch vertices, while $C$ has no branch vertices other than
(possibly) $x$ and $y$. Similarly, if $H$ is a subdivision of $K_{2,3}$ then 
$C$ must have two non-adjacent branch vertices, again a contradiction.
\item
Again let $P$ be a flat $xy$ path, and $e=xy$ the corresponding compliant edge.
Since $G$ is not a cycle there must be another vertex $z\in G - C$,
where $C = P\cup e$.
Since $G$ is $2$-connected, there exist two paths $Q_1,Q_2$ from 
$z$ to $C$ which are disjoint except at $z$. Since $P$ is flat, $Q_1$ and $Q_2$ 
must go from $z$ to $\{x,y\}$, respectively.
To see that $G-e$ is $2$-connected we exhibit two internally disjoint $xy$-paths in $G-e$,
namely $P$ and the concatenation of $Q_1$ and $Q_2$.
\end{enumerate}
\end{proof}
Thus \cref{part:2conn} of \cref{prop:compliantEdges} implies that, 
given any $2$-connected $G$, we can iteratively remove compliant edges, which process
ends with a cycle or a $2$-connected subgraph with no compliant edges.



To prove our main results we need some groundwork, starting with the following technical lemma regarding compliant edges in $2$-connected graphs:
\begin{lemma} \label{lem:compliantOuterplanar}
Let $G=(V,E)$ be a $2$-connected graph which is not a cycle, and which has no compliant edges. 
Let $e=xy\in E$, let $S$ be the vertex set of a connected component of $G-\{x,y\}$, and let 
$H$ be the subgraph spanned by $S\cup \{x,y\}$. Then $H$ is $2$-connected, and not outerplanar.
\end{lemma}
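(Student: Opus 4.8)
The plan is to prove the two assertions in turn: the $2$-connectedness of $H$ by a direct cut-vertex analysis, and the non-outerplanarity by contradiction, the point being that if $H$ were outerplanar then $G$ would have to contain a compliant edge --- except in one stubborn configuration, which instead produces a subdivision of $K_4$ inside $H$.

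For the $2$-connectedness of $H$: since $G$ is $2$-connected, $G-x$ and $G-y$ are connected, and this forces both $x$ and $y$ to have a neighbour in $S$ (if, say, $x$ had no neighbour in $S$, then $S$, which induces a connected subgraph of $G$, would be a union of components of $G-y$, so $G-y$ would be disconnected). Consequently $H$ is connected, and $H-x$ and $H-y$ are connected, so neither $x$ nor $y$ is a cut vertex of $H$. If some $v\in S$ were a cut vertex of $H$, then since $xy\in E(H)$ the vertices $x$ and $y$ lie in a common component of $H-v$, so there is a component $A$ of $H-v$ with $A\subseteq S\setminus\{v\}$; but $S$ is a component of $G-\{x,y\}$, so every $G$-neighbour of a vertex of $A$ lies in $S\cup\{x,y\}$, and as no vertex of $A$ is a neighbour of $x$, of $y$, or of another component of $H-v$, the single vertex $v$ separates $A$ from the rest of $G$, contradicting $2$-connectedness. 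Hence $H$ is $2$-connected.

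For the non-outerplanarity, suppose $H$ is outerplanar. First I would clear away two degenerate cases. If $G-\{x,y\}$ is connected then $H=G$, and then $G$ is a $2$-connected outerplanar graph that is not a cycle, contradicting \cref{part:compliant} of \cref{prop:compliantEdges}. So $G-\{x,y\}$ has a component $S'$ other than $S$; by the argument above, $x$ and $y$ each have a neighbour in $S$ and in $S'$, and they are adjacent via $e$, so (three distinct neighbours) $\deg_G(x),\deg_G(y)\ge 3$. The key bookkeeping observation is that every vertex of $S$ has all its neighbours in $V(H)=S\cup\{x,y\}$ and hence the same degree in $G$ as in $H$; therefore a flat path of $H$ none of whose internal vertices lies in $\{x,y\}$ is also a flat path of $G$. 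In particular, if $H$ is a cycle then $H-xy$ is such a path, making $e$ a compliant edge of $G$ --- contradiction. So $H$ is a $2$-connected outerplanar graph that is not a cycle, and \cref{part:compliant} of \cref{prop:compliantEdges} supplies a compliant edge $f=ab$ of $H$ together with two edge-disjoint (hence internally disjoint) flat paths $P_1,P_2$ of $H$, both with endpoints $a$ and $b$.

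The last step is a case analysis on $\{a,b\}\cap\{x,y\}$. If $\{a,b\}=\{x,y\}$, then the internal vertices of $P_1$ lie in $S$, so $P_1$ is a flat path of $G$ and $f=e$ is a compliant edge of $G$. If $|\{a,b\}\cap\{x,y\}|=1$, say $a=x$, then $x$ cannot be an internal vertex of $P_1$ or $P_2$, so if neither $P_i$ were flat in $G$ then each would have $y$ as an internal vertex --- impossible, since $P_1,P_2$ are internally disjoint --- so some $P_i$ is flat in $G$ and $f$ is a compliant edge of $G$. In both cases we contradict the absence of compliant edges. I expect the main obstacle to be the remaining case $\{a,b\}\cap\{x,y\}=\emptyset$, i.e.\ $a,b\in S$: here it may happen that $x$ is an internal vertex of $P_1$ while $y$ is an internal vertex of $P_2$, and then (by internal disjointness this is the only way for neither $P_i$ to be flat in $G$) neither path yields a compliant edge. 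To deal with this I plan to split $P_1$ at $x$ and $P_2$ at $y$, obtaining four internally disjoint paths joining $a$ to $x$, $x$ to $b$, $a$ to $y$, and $y$ to $b$, all with internal vertices in $S$; adjoining the edges $ab=f$ and $xy=e$ turns these six paths into a subdivision of $K_4$ with branch vertices $a,b,x,y$, and this subdivision is contained in $H$, contradicting the outerplanarity of $H$. Since every case leads to a contradiction, $H$ is not outerplanar.
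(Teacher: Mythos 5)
Your argument for the $2$-connectedness of $H$ is correct; it is a cut-vertex case analysis rather than the paper's direct rerouting of $uv$-paths through the edge $xy$, but the two are equivalent in substance, and your preliminary observation that $x$ and $y$ each have a neighbour in every component of $G-\{x,y\}$ is used correctly. The gap is in the non-outerplanarity half, and it is concentrated in your reading of \cref{part:compliant} of \cref{prop:compliantEdges}: you take it to supply \emph{one} compliant edge $f=ab$ of $H$ together with two edge-disjoint flat paths \emph{both} joining $a$ to $b$. The proposition does not give this, and the stronger statement is false: in a hexagon $v_1v_2\dots v_6$ with the two non-crossing chords $v_1v_3$ and $v_4v_6$, the only flat paths are $v_1v_2v_3$ and $v_4v_5v_6$, so the two guaranteed paths can have different endpoint pairs, each spanning its own compliant edge. (This is also what the proposition's inductive proof actually constructs, and the paper's own proof of the lemma speaks of endpoints forming compliant \emph{edges}, plural.) Since your entire case analysis is organized around the set $\{a,b\}\cap\{x,y\}$ for a common endpoint pair, it does not survive this correction without being redone.

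The repair is short, and it also shows that your final $K_4$ case is working much too hard. If $x$ is an internal vertex of a flat path $P$ of $H$, then $\deg_H(x)=2$, so \emph{every} edge of $H$ at $x$ lies on $P$; since $xy\in E(H)$, this forces $xy\in E(P)$, and symmetrically for $y$. Hence if neither $P_1$ nor $P_2$ is flat in $G$ (which is what the absence of compliant edges in $G$ gives you, via your correct observation that a flat path of $H$ with no internal vertex in $\{x,y\}$ is flat in $G$), then each $P_i$ has $x$ or $y$ as an internal vertex, so $xy\in E(P_1)\cap E(P_2)$, contradicting edge-disjointness --- with no assumption on the endpoints at all. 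This is essentially the paper's closing step. In particular the configuration you isolate as ``the main obstacle'' ($a,b\in S$, $x$ internal to $P_1$, $y$ internal to $P_2$) is self-contradictory: $y$ would have to be the neighbour of $x$ on $P_1$, hence an internal vertex of $P_1$, contradicting the internal disjointness you invoke one line earlier; so the $ax$- and $xb$-subpaths cannot both have all internal vertices in $S$, and the advertised $K_4$ subdivision rests on two incompatible consequences of the case hypotheses rather than on an actual construction.
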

\begin{proof}
Since $G$ is $2$-connected and has no compliant edges, it is non-outerplanar
by \cref{prop:compliantEdges}. This yields our claim when $H=G$,
so we can and will assume henceforth that $H\neq G$.
That $H$ is $2$-connected follows from a standard argument. For a vertex $z\in H$ we need to 
show that $H-z$ is connected. For any two vertices $u,v\in H-z$ we find a $(uv)$-path in $H-z$. Since $G$ is $2$-connected, $G-z$ is connected 
and there exists a path $P\subseteq G-z$ between $u$ and $v$. Since $x$ and $y$ separates $H$ from the rest of $G$, either $P\subseteq H-z$ or $x,y\in P$ with, say,
the $(ux)$-subpath and the $(vy)$-subpath of $P$ both contained in $H$. In the latter case, we can concatenate these two subpaths with edge $xy$ to obtain a $(uv)$-path in $H-z$. \\
We now show $H$ is not outerplanar. If $H$ is a cycle, then
$P = H -xy$ is a flat path in $H$ with $xy$ the corresponding compliant edge.
But $P$ is flat in $G$ as well, because other than $x$ and $y$ no vertex
in $P$ has neighbors outside of $H$. However, by
assumption $G$ has no compliant edges, 
so we can and will assume that $H$ is not a cycle. If $H$ is outerplanar, then by
\cref{prop:compliantEdges}, $H$ has two edge disjoint flat paths $P_1, P_2$ whose endpoints 
form compliant edges. By our assumption these paths are not flat in $G$.
Since $x$ and $y$ are the only vertices in $H$ that have neighbors outside of $H$,
necessarily $x$ and $y$ are internal vertices of $P_1$ and $P_2$. But then $P_1$ cannot be flat in $H$, because $x$
has a neighbor outside of $P_1$, namely $y$.
\end{proof}

\noindent
Next, we make a simple observation on the structure of non-outerplanar graphs.

\begin{lemma}\label{lem:nonOuterStucture}
If $G$ is a $2$-connected graph which is not outerplanar, then
every edge of $G$ is in some
subgraph which is a subdivision of $K_4$ or of $K_{2,3}$.
\end{lemma}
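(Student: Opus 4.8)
The plan is to argue by induction on $|V(G)|$. Fix an edge $e=xy$. Since $G$ is $2$-connected, $e$ lies on a cycle, so we may choose a \emph{longest} cycle $C$ with $e\in E(C)$. If $C$ is a Hamiltonian cycle of $G$ then, as $G$ is non-outerplanar, $G\neq C$, so $G$ has a chord of $C$; moreover some two chords $f_1,f_2$ must have endpoints interleaving along $C$, since otherwise every chord could be drawn inside $C$ without crossings and $G$ would be outerplanar. Then $C\cup f_1\cup f_2$ is a subdivision of $K_4$ containing $e$, and we are done. So assume henceforth that $C$ is not Hamiltonian, and let $D$ be the vertex set of a component of $G-V(C)$. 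By $2$-connectivity $D$ has at least two neighbours on $C$, and since $D$ is connected, for any two such neighbours $a,b$ there is an $a$--$b$ path of length $\ge 2$ with interior in $D$ that is internally disjoint from $C$ (and if $D$ has three neighbours on $C$, a ``tripod'' of three such paths through a common vertex of $D$).

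This yields a dichotomy. If for some choice the endpoints $a,b$ are \emph{non-adjacent} on $C$, then $C\cup P$ is a subdivision of $K_{2,3}$ (the two arcs of $C$ between $a,b$ together with $P$, all of length $\ge 2$) containing $e$, and we are done. If instead $P$ joins $a,b$ adjacent on $C$ via an edge $f\neq e$, then $(C-f)\cup P$ is a cycle through $e$ strictly longer than $C$, contradicting maximality. Running the same analysis through the tripod case, the only configuration that survives --- for \emph{every} component $D$ of $G-V(C)$ --- is that $|N_G(D)\cap V(C)|=2$ with $N_G(D)\cap V(C)=\{x,y\}$. In that situation $G$ decomposes as the edge $xy$ together with subgraphs $G_0,\dots,G_m$ with $m\ge 1$, where $G_0=G[V(C)]$ and $G_1,\dots,G_m$ are each induced on $\{x,y\}$ plus a single component of $G-V(C)$; each $G_i$ contains the edge $xy$, any two of the pieces meet only in $\{x,y\}$, and each $G_i$ is a proper $2$-connected subgraph of $G$ (the standard separator argument, as in \cref{lem:compliantOuterplanar}).

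To close, one shows each $G_i$ is non-outerplanar and then applies the inductive hypothesis to it, getting a subdivision of $K_4$ or $K_{2,3}$ inside $G_i\subseteq G$ through $xy$. For non-outerplanarity of $G_i$: here $x,y$ have degree $\ge 3$ in $G$, so they cannot be internal to a flat path of $G$; hence if some $G_i$ were a cycle, or were outerplanar (invoking \cref{prop:compliantEdges} to produce two edge‑disjoint flat paths of $G_i$), we would obtain a flat path of $G_i$ with neither $x$ nor $y$ internal, which is then a flat path of $G$ whose ends span an edge of $G$, contradicting that $G$ has no compliant edge. When applying induction to $G_i$ one first deletes any compliant edges of $G_i$ other than $xy$; by \cref{prop:compliantEdges} this keeps $G_i$ $2$-connected and non-outerplanar. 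I expect this last reduction (the ``glued-along-$xy$'' case) to be the real obstacle, and it is the step that must genuinely use more than bare $2$-connectivity and non-outerplanarity: the conclusion can fail for such graphs in general --- for instance for $K_{2,3}$ with an extra edge joining its two degree-$3$ vertices --- so the induction has to be organised around the absence of the flat structure that would otherwise leave $e$ outside every $K_4$- or $K_{2,3}$-subdivision.
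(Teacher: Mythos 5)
Your first two cases are sound and genuinely different from the paper's argument (which never takes a longest cycle or inducts): a Hamiltonian cycle with two interleaving chords gives a $K_4$-subdivision through $e$, and an ear of a longest cycle through $e$ whose feet are non-adjacent on the cycle gives a $K_{2,3}$-subdivision through $e$. But the surviving case is a genuine gap, and you have in fact put your finger on why: $K_{2,3}$ plus the edge $st$ joining its two degree-$3$ vertices is $2$-connected and non-outerplanar, yet the edge $st$ lies in no subdivision of $K_4$ (only two vertices of that graph have degree at least $3$) and in no subdivision of $K_{2,3}$ (the unique choice of three internally disjoint $st$-paths of length at least $2$ omits the edge $st$). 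So this is not merely an obstacle to your induction --- it is a counterexample to \cref{lem:nonOuterStucture} as literally stated. What the paper's proof actually establishes, and what its applications in \cref{lem:disjointCycles} use, is the weaker conclusion that for every edge $xy$ there is a subdivision of $K_4$ or $K_{2,3}$ containing \emph{both vertices} $x$ and $y$; the paper's own proof concludes with ``containing both vertices $x$ and $y$'' at each step. Your argument should be re-aimed at that statement.

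The rescue you propose --- organizing the induction around the absence of compliant edges --- is not available: the lemma carries no such hypothesis, and one cannot be added, because in Cases 2 and 3 of the proof of \cref{lem:disjointCycles} the lemma is applied to the $2$-connected non-outerplanar subgraph $H$ produced by \cref{lem:compliantOuterplanar}, and that subgraph may well have compliant edges of its own even when the ambient graph has none (indeed your own non-outerplanarity argument for the pieces $G_i$ silently assumes $G$ has no compliant edge, which is nowhere granted). The paper's proof of the endpoint version is also much shorter and needs no case split on cycles: fix any subdivision $H$ of $K_4$ or $K_{2,3}$, which exists by non-outerplanarity; if both $x,y\in V(H)$ you are done; otherwise $2$-connectivity yields one path from $x$ avoiding $y$ (prepended by the edge $yx$) or two disjoint paths from $\{x,y\}$ to $H$, internally disjoint from $H$, and the resulting $H$-ear through $xy$ is either spliced into an edge of $H$ (giving a subdivision of the same graph through $x$ and $y$) or combined with two internally disjoint paths of $H$ between its feet to form a theta graph, i.e.\ a subdivided $K_{2,3}$. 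If you want to keep your structure, note that for the endpoint version your surviving case closes immediately when $G-V(C)$ has at least two components ($C-e$ plus one path through each component is already a subdivided $K_{2,3}$ containing $x$ and $y$), but the single-component subcase still requires an argument, so I would simply adopt the paper's rerouting proof.
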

\begin{proof}
Let $H$ be a subgraph of $G$ which is a subdivision of 
$K_4$ or $K_{2,3}$. Such $H$ exists, since $G$ is not outerplanar.
Let $xy\in E(G)$, and wlog $x\notin V(H)$. We
proceed according to whether $y\in V(H)$ or not.

Consider first the case $y\in H$. Since $G$ is $2$-connected, there 
is a path $P$ from $x$ to a vertex $u\in H$, $u\neq y$. 
But then $yxP$ is a $yu$ path whose internal vertices are disjoint from $H$. 
If $u$ and $y$ are neighbors, we may replace the edge $yu\in E(H)$ with the $yu$-path $yxP$ 
to obtain a graph isomorphic to a subdivision of $H$ containing both vertices $x$ and $y$. 
If $uy\not\in E(H)$ then there exists two internally disjoint $yu$ paths in $H$, 
of length at least $2$. These two $yu$ paths along with the path $yxP$ constitute a subdivided
$K_{2,3}$ containing $x$ and $y$.

Next suppose that neither $x$ nor $y$ is in $H$. 
Since $G$ is $2$-connected, there exist two disjoint paths between $\{x,y\}$ and $H$. 
Namely, $P_1$ is a $xu$ path, $P_2$ is a $yv$ path. Two
paths that are disjoint from $H$ except at their endpoints $u$ and $v$. The
concatenated path $Q=P_1^{-1}xyP_2$, is a $uv$ path whose 
internal vertices are disjoint from $H$.
As before, if $uv \in E(H)$ then we may replace the edge $uv$ by the path $Q$ to obtain a
subgraph of $G$ that is isomorphic to a
subdivision of $H$. Similarly, if $uv \not\in E(H)$ then there are two internally disjoint $uv$ 
paths in $H$, with length at least $2$. These two paths together with $Q$ form
a subdivided $K_{2,3}$ that contains both $x$ and $y$.
\end{proof} 
Let $K_{2,4}'$  denote the graph obtained by adding to $K_{2,4}$ an edge between two vertices on the side of $4$. We prove:

\begin{lemma}\label{lem:K24}
    If a $2$-connected graph $G$ with at least $9$ vertices has a subgraph $H$ that is a subdivision of $K_{2,4}'$, then $G$ is not metrizable.
\end{lemma}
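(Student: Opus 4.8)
The goal is to exhibit inside $G$ a subdivision of one of the sixteen non-metrizable ``zoo'' graphs $7a$--$7p$; since the family of metrizable graphs is closed under topological minors \cite{CL}, this proves $G$ non-metrizable. First I would fix notation for $H$. Its branch vertices are $a,b$ of degree $4$ (the images of the size-$2$ side of $K_{2,4}'$) and $c_1,c_2$ of degree $3$, where $c_1c_2$ is the image of the extra edge; its branch paths are $A_1,A_2$ joining $a$ to $c_1,c_2$, paths $B_1,B_2$ joining $b$ to $c_1,c_2$, a path $L$ joining $c_1$ to $c_2$, and two internally disjoint $a$--$b$ paths $D_3,D_4$ (the images of the two legs through the remaining degree-$2$ vertices of $K_{2,4}'$).

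If $H=G$, then $G$ is itself a subdivision of $K_{2,4}'$ on at least $9$ vertices, and I would dispatch this by a direct argument locating a zoo subdivision inside any such graph. Otherwise $G$ has a vertex or an edge outside $H$, and, $G$ being $2$-connected, Menger's theorem yields an \emph{ear}: a path $P$ in $G$ with distinct endpoints $p,q\in V(H)$, no internal vertex in $V(H)$, and not an edge of $H$ --- so $P$ is either a chord of $H$ or a path with at least one internal vertex.

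The heart of the proof is a case analysis on where $p$ and $q$ lie: each is a branch vertex or an internal vertex of one of the seven branch paths above. In the ``generic'' configurations $P$ supplies exactly the structure that $K_{2,4}'$ lacks, and a trivial rerouting turns $H\cup P$ into a subdivision of a specific zoo graph --- for instance, an ear with both ends on $D_3\cup D_4$ creates a chord between the two $a$--$b$ legs, an ear between $a$ and $b$ creates a fifth $a$--$b$ leg (giving a subdivided $K_{2,5}$-with-a-chord), and an ear with ends on branch paths incident to the two opposite hubs can be rerouted into a more elaborate configuration that is again matched against the zoo; in each case I would name the member of $7a$--$7p$ it subdivides. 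The ``degenerate'' configurations --- where $P$ joins two vertices of a single branch path, or a branch vertex to an internal vertex of an incident branch path --- need more care: there I would either use $P$ to reroute that path, obtaining a new $K_{2,4}'$-subdivision of $G$ covering strictly more vertices (and then recurse, the process terminating since $G$ is finite), or observe that such an ear already creates two vertex-disjoint cycles in $G$ which, together with the surviving $K_{2,4}'$-structure, assemble into a zoo subdivision.

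I expect the main obstacle to be the bookkeeping in this case analysis: there are several genuinely distinct attachment patterns, each needing its own zoo graph; one must check that the rerouting step strictly increases the number of vertices of $G$ covered by the current $K_{2,4}'$-subdivision, so that the recursion halts either in a non-degenerate case or in the case $H=G$; and one must see why ``at least $9$ vertices'' is the right hypothesis (rather than $6$, $7$, or $8$), which should reflect that $K_{2,4}'$ itself and its smallest subdivisions are metrizable. A further subtlety is that, unlike in the later lemmas, $G$ is not assumed to be free of compliant edges, so the zoo subdivision must be produced directly in $G$, without first simplifying it via \cref{compliant_can_be_deleted}.
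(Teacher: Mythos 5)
Your high-level framework -- reduce to a minimal subdivision, find an ear via $2$-connectivity, and match the resulting configuration against the zoo -- is indeed the skeleton of the paper's argument, but the proposal has a genuine gap at exactly the point where the lemma is hardest. Your plan asserts that in the ``generic'' configurations a \emph{single} ear already turns $H\cup P$ into a zoo subdivision. That is true when the ear joins the two hubs, or a hub to a leg, or two legs by a path of length at least $3$ (these give \cref{fig:graph1} or \cref{fig:graph11}, essentially because they create a fourth or fifth hub-to-hub path of length at least $3$). But it is false in the one case that survives all the easy reductions: an ear of length exactly $2$ joining two degree-$3$ legs $u_i,u_j$. The graph $H\cup u_izu_j$ has only $7$ vertices and need not contain any zoo subdivision, so no amount of rerouting of a single ear closes this case. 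The paper's proof closes it by using the hypothesis $|G|\ge 9$ quantitatively: after reducing $H$ to the unsubdivided $K_{2,4}'$ on $6$ vertices (any subdivided leg gives \cref{fig:graph1}, a chord of length greater than $2$ gives \cref{fig:graph11}), there are at least \emph{three} vertices outside $H$, each of which produces its own length-$2$ ear $u_{i}zu_{j}$; one then does a pigeonhole on the three index pairs in $\{1,2,3,4\}$: if two pairs meet in exactly one index one finds \cref{fig:graph12}, and otherwise two pairs coincide, giving \cref{fig:graph1} or \cref{fig:graph10}. This counting-plus-pigeonhole step is the content of the lemma and is absent from your proposal; your closing remark that one ``must see why at least $9$ vertices is the right hypothesis'' is precisely the missing ingredient rather than a detail to be checked later.

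Two smaller points. First, the case $H=G$ that you defer to ``a direct argument'' is actually immediate once the reduction above is in place: a subdivision of $K_{2,4}'$ on at least $9$ vertices must subdivide some edge, hence already contains \cref{fig:graph1} or \cref{fig:graph11}; there is no separate case. Second, your recursion on ``a $K_{2,4}'$-subdivision covering strictly more vertices'' is sound but unnecessary -- the paper's reduction shows that covering more vertices is never the goal, since any enlargement of $H$ beyond the minimal form is itself fatal; the work lies entirely in the configuration where enlargement is impossible. The worry about compliant edges is likewise moot, since the zoo subdivisions are exhibited directly in $G$.
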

\begin{proof}
Since $H$ contains a subdivision of $K_{2,4}$, there exist
$x,y\in H$ with $4$ internally disjoint $xy$ paths $P_1,P_2,P_3,P_4$ between them of
length at least $2$. Observe that if the length of any $P_i$ strictly exceeds $2$,
then we get a subdivision of \cref{fig:graph1}. 
We can therefore assume that $H$ looks as follows: 
The above paths are $P_i = xu_iy$, $i=1,2,3,4$ 
and in addition there is a $u_1u_2$ path $Q$, disjoint from $\{x,y,u_3,u_4\}$. 
Note that if $|Q|>2$ then we obtain a subdivision of \cref{fig:graph11}. 
Therefore, we may assume that $Q$ is either an edge or has length $2$. 
We argue the case where $Q$ is an edge, \cref{fig:K24'notMet1}, and note the same argument
applies as well when $Q$ has length $2$. 
Since $H$ has $6$ vertices there is a vertex $z\in G - H$, and since
$G$ is $2$-connected, there are two paths from $z$ to $H$ which are disjoint except at $z$.
Concatenating these paths at $z$, we obtain a path of length at least $2$ which intersects 
$H$ only at its endpoints. If these endpoints are $x$ and $y$,
then we obtain a subdivision of \cref{fig:graph1}, see \cref{fig:K24'notMet2}. Similarly, if the endpoints of this path is $x$ and $u_i$ (or $y$ and $u_i$) for some $i=1,2,3,4$ we again obtain a subdivision of \cref{fig:graph1}. Therefore, the only possibility left is that the path connects two vertices $u_i, u_j$ for $i\neq j$. (For the following arguments we may ignore the edge $u_1u_2$ so that the argument is symmetric w.r.t.\
vertices $u_1,u_2,u_3,u_4$.)
\begin{figure}
		\centering
		\begin{subfigure}{0.28\textwidth}
			\centering
			\begin{tikzpicture}
                \def\width{1.75cm}
			\def\vtxSize{0.5cm}
			\def\height{1.5cm}
                \def\edgewidth{1.25pt}
			\node[draw,circle,minimum size=\vtxSize,inner sep=0pt] (u1) at (-\width,0) {$u_1$};
                \node[draw,circle,minimum size=\vtxSize,inner sep=0pt] (u4) at (\width,0) {$u_4$};
                \node[draw,circle,minimum size=\vtxSize,inner sep=0pt] (u2) at  ($(u1)!0.33!(u4)$){$u_2$};
                \node[draw,circle,minimum size=\vtxSize,inner sep=0pt] (u3) at  ($(u1)!0.66!(u4)$){$u_3$};

			\node[draw,circle,minimum size=\vtxSize,inner sep=0pt] (x) at (0,\height) {$x$};
			\node[draw,circle,minimum size=\vtxSize,inner sep=0pt] (y) at (0,-\height) {$y$};

			\draw [line width=\edgewidth,-] (u1) -- (u2);
                \draw [line width=\edgewidth,-] (u1) -- (x);
                \draw [line width=\edgewidth,-] (u2) -- (x);
                \draw [line width=\edgewidth,-] (u3) -- (x);
                \draw [line width=\edgewidth,-] (u4) -- (x);
                \draw [line width=\edgewidth,-] (u1) -- (y);
                \draw [line width=\edgewidth,-] (u2) -- (y);
                \draw [line width=\edgewidth,-] (u3) -- (y);
                \draw [line width=\edgewidth,-] (u4) -- (y);

			\end{tikzpicture}
			\caption{A $K_{2,4}'$ subgraph}
			\label{fig:K24'notMet1}
			
		\end{subfigure}
		\hfill
		\begin{subfigure}{0.28\textwidth}
			\centering
			\begin{tikzpicture}
                \def\width{1.75cm}
			\def\vtxSize{0.5cm}
			\def\height{1.5cm}
                \def\edgewidth{1.25pt}
			\node[draw,circle,minimum size=\vtxSize,inner sep=0pt] (u1) at (-\width,0) {$u_1$};
                \node[draw,circle,minimum size=\vtxSize,inner sep=0pt] (z) at (\width,0) {$z$};
                \node[draw,circle,minimum size=\vtxSize,inner sep=0pt] (u2) at  ($(u1)!0.25!(z)$){$u_2$};
                \node[draw,circle,minimum size=\vtxSize,inner sep=0pt] (u3) at  ($(u1)!0.5!(z)$){$u_3$};
                \node[draw,circle,minimum size=\vtxSize,inner sep=0pt] (u4) at  ($(u1)!0.75!(z)$){$u_4$};

			\node[draw,circle,minimum size=\vtxSize,inner sep=0pt] (x) at (0,\height) {$x$};
			\node[draw,circle,minimum size=\vtxSize,inner sep=0pt] (y) at (0,-\height) {$y$};

			\draw [line width=\edgewidth,-,red1] (u1) -- (u2);
                \draw [line width=\edgewidth,-] (u1) -- (x);
                \draw [line width=\edgewidth,-,red1] (u2) -- (x);
                \draw [line width=\edgewidth,-,red1] (u3) -- (x);
                \draw [line width=\edgewidth,-,red1] (u4) -- (x);
                \draw [line width=\edgewidth,-,red1] (u1) -- (y);
                \draw [line width=\edgewidth,-] (u2) -- (y);
                \draw [line width=\edgewidth,-,red1] (u3) -- (y);
                \draw [line width=\edgewidth,-,red1] (u4) -- (y);
                \draw [line width=\edgewidth,dashed,red1] (z) -- (y);
                \draw [line width=\edgewidth,dashed,red1] (z) -- (x);

			\end{tikzpicture}
			\caption{A subdivision of \cref{fig:graph1}.}
			\label{fig:K24'notMet2}
			
		\end{subfigure}
		\hfill
		\begin{subfigure}{0.28\textwidth}
			\centering
			\begin{tikzpicture}
                \def\width{1.75cm}
			\def\vtxSize{0.5cm}
			\def\height{1.5cm}
                \def\edgewidth{1.25pt}
                \def\shift{.5cm}
			\node[draw,circle,minimum size=\vtxSize,inner sep=0pt] (u1) at (-\width,0) {$u_1$};
                \node[draw,circle,minimum size=\vtxSize,inner sep=0pt] (u4) at (\width,0) {$u_4$};
                \node[draw,circle,minimum size=\vtxSize,inner sep=0pt] (u2) at  ($(u1)!0.6!(u4)$){$u_2$};
                \node[draw,circle,minimum size=\vtxSize,inner sep=0pt] (u3) at  ($(u1)!0.8!(u4)$){$u_3$};
                 \node[draw,circle,minimum size=.5*\vtxSize,inner sep=1pt,fill, color=black] (z) at  ($(u1)!0.5!(u2)$){};
                \node[draw,circle,minimum size=.5*\vtxSize,inner sep=1pt,fill, color=black] (z') at  ($(u1)!0.5!(u2) + (0,\shift)$){};
                \node[draw,circle,minimum size=.5*\vtxSize,inner sep=1pt,fill, color=black] (z'') at  ($(u1)!0.5!(u2) + (0,-\shift)$){};

			\node[draw,circle,minimum size=\vtxSize,inner sep=0pt] (x) at (0,\height) {$x$};
			\node[draw,circle,minimum size=\vtxSize,inner sep=0pt] (y) at (0,-\height) {$y$};

			\draw [line width=\edgewidth,-,red1] (u1) -- (z);
                \draw [line width=\edgewidth,-,red1] (u1) -- (z');
                \draw [line width=\edgewidth,-,red1] (u1) -- (z'');
                \draw [line width=\edgewidth,-] (u1) -- (x);
                \draw [line width=\edgewidth,-,red1] (u2) -- (x);
                \draw [line width=\edgewidth,-,red1] (u2) -- (z);
                \draw [line width=\edgewidth,-,red1] (u2) -- (z');
                \draw [line width=\edgewidth,-,red1] (u2) -- (z'');
                \draw [line width=\edgewidth,-] (u3) -- (x);
                \draw [line width=\edgewidth,-,red1] (u4) -- (x);
                \draw [line width=\edgewidth,-,red1] (u1) -- (y);
                \draw [line width=\edgewidth,-] (u2) -- (y);
                \draw [line width=\edgewidth,-] (u3) -- (y);
                \draw [line width=\edgewidth,-,red1] (u4) -- (y);

			\end{tikzpicture}
			\caption{A subdivision of \cref{fig:graph1}.}
			\label{fig:K24'notMet3}
			
		\end{subfigure}
		\caption{}
	\end{figure}
If the length of this $u_iu_j$ path is strictly larger than $2$ we obtain a subdivision of \cref{fig:graph11}. Therefore, there is a path $u_izu_j$ in $G$. Since $|H|=6$ and $|G|\geq 9$ there must be at least two other vertices $z'$ and $z''$ not in $H$, and the same argument 
yields paths $u_{i'}z'u_{j'}$ and $u_{i''}z''u_{j''}$  in $G$. We claim that if, say, 
$|\{i,j\}\cap \{i',j'\}|=1$ then $G$ is not metrizable. 
Indeed, if say $i=1$, $j=j'=2$ and $i'=3$ then \cref{fig:graph12} is a subgraph
of $H\cup u_1zu_2\cup u_2z'u_3$. So we may assume that the sets $\{i,j\}$,$\{i',j'\}$, $\{i'',j''\} \subset \{1,2,3,4\}$ are pairwise equal or disjoint. There are two subcases to consider: i) all three sets coincide, ii) two sets are equal while the third is disjoint. For i) suppose the three sets are equal to $\{1,2\}$. In this case $G$ contains a subdivision of \cref{fig:graph1}, see \cref{fig:K24'notMet3}. For ii) suppose two of sets are equal to $\{1,2\}$ while the third is $\{3,4\}$. Then $G$ contains a subdivision of \cref{fig:graph10}, see \cref{fig:K24'notMet4}.
In either case, $G$ is not metrizable.
\end{proof}

\section{Proof of the Main Results}
We now prove the main results starting with \cref{lem:disjointCycles}.

\begin{proof}{ [\Cref{lem:disjointCycles}]}
Let $C_1$ and $C_2$ be two disjoint cycles in $G$. Since $G$ is $2$-connected, there exists two disjoint paths, $R_1$ and 
$R_2$, (possibly edges) between them. Say $R_1$ connects between $x_1\in C_1$ to $x_2\in C_2$, 
and $R_2$ connects $y_1\in C_1$ to $y_2\in C_2$.\\
Set $H = C_1\cup C_2 \cup R_1\cup R_2$, \cref{fig:disCycCase1_1}.
There are three cases to consider:
\begin{itemize}
    \item[]  Case 1. Neither $x_1y_1$ nor $x_2y_2$ are edges of $C_1$ and $C_2$, respectively.
    \item[] Case 2. $x_1,y_1$ are not adjacent in $C_1$ but $x_2y_2$ is an edge of $C_2$
    \item[]  Case 3. Both $x_1y_1$ and $x_2y_2$ are adjacent in $C_1$ and $C_2$, respectively
\end{itemize} 
 \begin{figure}[h]
		\centering
		\begin{subfigure}[t]{0.28\textwidth}
			\centering
			\begin{tikzpicture}
                \def\width{1.75cm}
			\def\vtxSize{0.5cm}
			\def\height{1.5cm}
                \def\edgewidth{1.25pt}
                \def\r{\width/3}
			\node[draw,circle,minimum size=\vtxSize,inner sep=0pt] (u1) at (-\width,0) {$u_1$};
                \node[draw,circle,minimum size=\vtxSize,inner sep=0pt] (u4) at (\width,0) {$u_4$};
                \node[draw,circle,minimum size=\vtxSize,inner sep=0pt] (u2) at  ($(u1)!0.45!(u4)$){$u_2$};
                \node[draw,circle,minimum size=\vtxSize,inner sep=0pt] (u3) at  ($(u1)!0.65!(u4)$){$u_3$};

			\node[draw,circle,minimum size=\vtxSize,inner sep=0pt] (x) at (0,\height) {$x$};
			\node[draw,circle,minimum size=\vtxSize,inner sep=0pt] (y) at (0,-\height) {$y$};

                \node[draw,circle,minimum size=.5*\vtxSize,inner sep=1pt,fill, color=black] (z) at  ($(u1)!.5!(u2)+ (0,\r/2)$){};
                \node[draw,circle,minimum size=.5*\vtxSize,inner sep=1pt,fill, color=black] (z') at  ($(u1)!.5!(u2)+ (0,-\r/2)$){};
			\node[draw,circle,minimum size=.5*\vtxSize,inner sep=1pt,fill, color=black] (z'') at  ($(u3)!.5!(u4)$){};

                \draw [line width=\edgewidth,-,red1] (u1) -- (x);
                \draw [line width=\edgewidth,-,red1] (u2) -- (x);
                \draw [line width=\edgewidth,-] (u3) -- (x);
                \draw [line width=\edgewidth,-,red1] (u4) -- (x);
                \draw [line width=\edgewidth,-,red1] (u1) -- (y);
                \draw [line width=\edgewidth,-,red1] (u2) -- (y);
                \draw [line width=\edgewidth,-,red1] (u3) -- (y);
                \draw [line width=\edgewidth,-] (u4) -- (y);
                \draw [line width=\edgewidth,-,red1] (u1) -- (z);
                \draw [line width=\edgewidth,-,red1] (u1) -- (z');
                \draw [line width=\edgewidth,-,red1] (u2) -- (z);
                \draw [line width=\edgewidth,-,red1] (u2) -- (z');
                \draw [line width=\edgewidth,-,red1] (u3) -- (z'');
                \draw [line width=\edgewidth,-,red1] (u4) -- (z'');

			\end{tikzpicture}
			\caption{A subdivision of \cref{fig:graph10}.}
			\label{fig:K24'notMet4}
			
		\end{subfigure}
		\hfill 
            \begin{subfigure}[t]{0.28\textwidth}
			\centering
			\begin{tikzpicture}
			\def\vtxSize{0.5cm}
                \def\width{1.25cm}
			\def\height{1.25cm}
                \def\edgewidth{1.5pt}
			\node[draw,circle,minimum size=\vtxSize,inner sep=0pt] (x1) at (-\width,\height) {$x_1$};
                \node[draw,circle,minimum size=\vtxSize,inner sep=0pt] (y1) at (-\width,-\height) {$y_1$};
                \node[draw,circle,minimum size=\vtxSize,inner sep=0pt] (x2) at  (\width, \height){$x_2$};
                \node[draw,circle,minimum size=\vtxSize,inner sep=0pt] (y2) at  (\width, -\height){$y_2$};
                \node at (-\width, 0) {$C_1$};
                \node at (\width, 0) {$C_2$};

			\draw [line width=\edgewidth,dotted] (x1) -- (x2) node[midway, above]{$R_1$};
                \draw [line width=\edgewidth,dotted] (y1) -- (y2) node[midway, above]{$R_2$};;
                \draw [line width=\edgewidth,dotted] (x1) to[out=-135, in=135] (y1);
                \draw [line width=\edgewidth,dotted] (x1) to[out=-45, in=45] (y1);
                \draw [line width=\edgewidth,dotted] (x2) to[out=-135, in=135] (y2);
                \draw [line width=\edgewidth,dotted] (x2) to[out=-45, in=45] (y2);

			\end{tikzpicture}
			\caption{The subgraph $H$ of disjoint cycles.}
			\label{fig:disCycCase1_1}
			
		\end{subfigure}
		\hfill
		\begin{subfigure}[t]{0.28\textwidth}
			\centering
			\begin{tikzpicture}
               \def\vtxSize{0.5cm}
                \def\width{1.25cm}
			\def\height{1.25cm}
                \def\edgewidth{1.25pt}
                \def\r{.5cm}
			\node[draw,circle,minimum size=\vtxSize,inner sep=0pt] (x1) at (-       \width,\height) {$x_1$};
                \node[draw,circle,minimum size=\vtxSize,inner sep=0pt] (y1) at (-\width,-\height) {$y_1$};
                \node[draw,circle,minimum size=\vtxSize,inner sep=0pt] (x2) at  (\width, \height){$x_2$};
                \node[draw,circle,minimum size=\vtxSize,inner sep=0pt] (y2) at  (\width, -\height){$y_2$};
                \node[draw,circle,minimum size=\vtxSize,inner sep=0pt] (u1) at  (-\width-\r, 0){$u_1$};
                \node[draw,circle,minimum size=\vtxSize,inner sep=0pt] (v1) at  (-\width+\r, 0){$v_1$};
                \node[draw,circle,minimum size=\vtxSize,inner sep=0pt] (u2) at  (\width-\r, 0){$u_2$};
                \node[draw,circle,minimum size=\vtxSize,inner sep=0pt] (v2) at  (\width+\r, 0){$v_2$};

			\draw [line width=\edgewidth,-,red1] (x1) -- (x2);
                \draw [line width=\edgewidth,-,red1] (y1) -- (y2);
                \draw [line width=\edgewidth,-,red1] (x1) -- (u1);
                \draw [line width=\edgewidth,-,red1] (x1) -- (v1);
                \draw [line width=\edgewidth,-,red1] (y1) -- (u1);
                \draw [line width=\edgewidth,-,red1] (y1) -- (v1);
                \draw [line width=\edgewidth,-] (x2) -- (u2);
                \draw [line width=\edgewidth,-,red1] (x2) -- (v2);
                \draw [line width=\edgewidth,-,red1] (y2) -- (u2);
                \draw [line width=\edgewidth,-,red1] (y2) -- (v2);
                \draw [line width=\edgewidth,dashed,red1] (x1) -- (u2);

			\end{tikzpicture}
			\caption{A subdivision of \cref{fig:graph2}.}
			\label{fig:disCycCase1_2}
			
		\end{subfigure}
		\caption{}
	\end{figure}
{\it Case 1:}
The vertices $x_i, y_i$ split $C_i$ into two arcs ($i=1,2$).
If any of these four arcs is strictly longer than $2$, then
we we obtain a subdivision of \cref{fig:graph14}. 
So we can assume that $C_1, C_2$ are $4$-cycles, with 
$x_i,y_i$ antipodal in $C_i$.  Also, if either $|R_1|>1$ or $|R_2|>1$ we get a subdivision of \cref{fig:graph13}. 
So we can assume that $R_1, R_2$ are in fact edges. 
Let $u_i,v_i$ be the two vertices in $C_i - \{x_i,y_i\}$. 
Since $G$ is $2$-connected and has at least $11$ vertices and $H$ contains $8$,
we can, as before, find an $H$-{\em ear} in $G$. Namely,
a path $Q$ of at least length $2$ 
between two vertices in $H$. We can assume that these two vertices are non-adjacent
since, as discussed above, subdividing any edge in $H$ yields a non-metrizable graph. 
If $Q$ runs between a vertex in $C_1$ and a non-neighbor of it in $C_2$, then
up to symmetry three subcases may occur:  $Q$ connects i) $x_1$ and $y_2$, ii) $x_1$ and $u_2$, 
and iii) $u_1$ and $u_2$. 
Subcases i) and ii) yield a subdivision of \cref{fig:graph2}, see
\cref{fig:disCycCase1_4}. Lastly, case iii) gives a subdivision of the prism,
\cref{fig:graph4}, which is not metrizable, see \cref{fig:disCycCase1_5}. So we may assume that $Q$ connects two non-adjacent vertices in the same $C_i$. 
Notice a path between $x_i$ and $y_i$ gives a subdivision of \cref{fig:graph1}. So the only option left is that $Q$ connects $u_i$ and $v_i$. In this case, if $|Q|>2$ we then obtain a subdivision of \cref{fig:graph16}, see \cref{fig:disCycCase1_7}, 
which leaves only the case $Q=u_i z v_i$. 
However, since $|H|=8$ and $|G|\geq 11$,
there exists at least three distinct vertices $z,z',z''\in G-H$ such that $u_{i}zv_{i}$, $u_{i'}z'v_{i'}$ and $u_{i''}z''v_{i''}$ are paths in $G$. Since $i,i',i'' \in \{1,2\}$ at least two of these indices must coincide, say $i=i'=1$, which yields
a subdivision of \cref{fig:graph10}, see \cref{fig:disCycCase1_3}, and $G$ is not metrizable.

  \begin{figure}[h]
		\centering
            \begin{subfigure}[t]{0.28\textwidth}
			\centering
			\begin{tikzpicture}
               \def\vtxSize{0.5cm}
                \def\width{1.25cm}
			\def\height{1.25cm}
                \def\edgewidth{1.25pt}
                \def\r{.5cm}
			\node[draw,circle,minimum size=\vtxSize,inner sep=0pt] (x1) at (-       \width,\height) {$x_1$};
                \node[draw,circle,minimum size=\vtxSize,inner sep=0pt] (y1) at (-\width,-\height) {$y_1$};
                \node[draw,circle,minimum size=\vtxSize,inner sep=0pt] (x2) at  (\width, \height){$x_2$};
                \node[draw,circle,minimum size=\vtxSize,inner sep=0pt] (y2) at  (\width, -\height){$y_2$};
                \node[draw,circle,minimum size=\vtxSize,inner sep=0pt] (u1) at  (-\width-\r, 0){$u_1$};
                \node[draw,circle,minimum size=\vtxSize,inner sep=0pt] (v1) at  (-\width+\r, 0){$v_1$};
                \node[draw,circle,minimum size=\vtxSize,inner sep=0pt] (u2) at  (\width-\r, 0){$u_2$};
                \node[draw,circle,minimum size=\vtxSize,inner sep=0pt] (v2) at  (\width+\r, 0){$v_2$};
			 \node[draw,circle,minimum size=.5*\vtxSize,inner sep=1pt,fill, color=black] (z) at  ($(x1)!.5!(u2)$){};

			\draw [line width=\edgewidth,-,red1] (x1) -- (x2);
                \draw [line width=\edgewidth,-,red1] (y1) -- (y2);
                \draw [line width=\edgewidth,-,red1] (x1) -- (u1);
                \draw [line width=\edgewidth,-,red1] (x1) -- (v1);
                \draw [line width=\edgewidth,-,red1] (y1) -- (u1);
                \draw [line width=\edgewidth,-,red1] (y1) -- (v1);
                \draw [line width=\edgewidth,-] (x2) -- (u2);
                \draw [line width=\edgewidth,-,red1] (x2) -- (v2);
                \draw [line width=\edgewidth,-,red1] (y2) -- (u2);
                \draw [line width=\edgewidth,-,red1] (y2) -- (v2);
                \draw [line width=\edgewidth,dashed,red1] (x1) -- (z);
                 \draw [line width=\edgewidth,dashed,red1] (u2) -- (z);

			\end{tikzpicture}
			\caption{A path between $x_1$ and $u_2$ yields a subdivision of \cref{fig:graph2}.}
			\label{fig:disCycCase1_4}
		\end{subfigure}
		\hfill
            \begin{subfigure}[t]{0.28\textwidth}
			\centering
			\begin{tikzpicture}
               \def\vtxSize{0.5cm}
                \def\width{1.25cm}
			\def\height{1.25cm}
                \def\edgewidth{1.25pt}
                \def\r{.5cm}
			\node[draw,circle,minimum size=\vtxSize,inner sep=0pt] (x1) at (-       \width,\height) {$x_1$};
                \node[draw,circle,minimum size=\vtxSize,inner sep=0pt] (y1) at (-\width,-\height) {$y_1$};
                \node[draw,circle,minimum size=\vtxSize,inner sep=0pt] (x2) at  (\width, \height){$x_2$};
                \node[draw,circle,minimum size=\vtxSize,inner sep=0pt] (y2) at  (\width, -\height){$y_2$};
                \node[draw,circle,minimum size=\vtxSize,inner sep=0pt] (u1) at  (-\width-\r, 0){$u_1$};
                \node[draw,circle,minimum size=\vtxSize,inner sep=0pt] (v1) at  (-\width+\r, 0){$v_1$};
                \node[draw,circle,minimum size=\vtxSize,inner sep=0pt] (u2) at  (\width-\r, 0){$u_2$};
                \node[draw,circle,minimum size=\vtxSize,inner sep=0pt] (v2) at  (\width+\r, 0){$v_2$};
			 \node[draw,circle,minimum size=.5*\vtxSize,inner sep=1pt,fill, color=black] (z) at  ($(v1)!.5!(u2)$){};

			\draw [line width=\edgewidth,-,red1] (x1) -- (x2);
                \draw [line width=\edgewidth,-,red1] (y1) -- (y2);
                \draw [line width=\edgewidth,-,red1] (x1) -- (u1);
                \draw [line width=\edgewidth,-,red1] (x1) -- (v1);
                \draw [line width=\edgewidth,-,red1] (y1) -- (u1);
                \draw [line width=\edgewidth,-,red1] (y1) -- (v1);
                \draw [line width=\edgewidth,-,red1] (x2) -- (u2);
                \draw [line width=\edgewidth,-,red1] (x2) -- (v2);
                \draw [line width=\edgewidth,-,red1] (y2) -- (u2);
                \draw [line width=\edgewidth,-,red1] (y2) -- (v2);
                \draw [line width=\edgewidth,dashed,red1] (v1) -- (z);
                 \draw [line width=\edgewidth,dashed,red1] (u2) -- (z);

			\end{tikzpicture}
			\caption{A path between $x_1$ and $y_2$ yields a subdivision of \cref{fig:graph4}.}
			\label{fig:disCycCase1_5}
			
		\end{subfigure}
		\hfill
            \begin{subfigure}[t]{0.28\textwidth}
			\centering
			\begin{tikzpicture}
               \def\vtxSize{0.5cm}
                \def\width{1.25cm}
			\def\height{1.25cm}
                \def\edgewidth{1.25pt}
                \def\r{.5cm}
			\node[draw,circle,minimum size=\vtxSize,inner sep=0pt] (x1) at (-       \width,\height) {$x_1$};
                \node[draw,circle,minimum size=\vtxSize,inner sep=0pt] (y1) at (-\width,-\height) {$y_1$};
                \node[draw,circle,minimum size=\vtxSize,inner sep=0pt] (x2) at  (\width, \height){$x_2$};
                \node[draw,circle,minimum size=\vtxSize,inner sep=0pt] (y2) at  (\width, -\height){$y_2$};
                \node[draw,circle,minimum size=\vtxSize,inner sep=0pt] (u1) at  (-\width-1.75*\r, 0){$u_1$};
                \node[draw,circle,minimum size=\vtxSize,inner sep=0pt] (v1) at  (-\width+1.75*\r, 0){$v_1$};
                \node[draw,circle,minimum size=\vtxSize,inner sep=0pt] (u2) at  (\width-\r, 0){$u_2$};
                \node[draw,circle,minimum size=\vtxSize,inner sep=0pt] (v2) at  (\width+\r, 0){$v_2$};
			 \node[draw,circle,minimum size=.5*\vtxSize,inner sep=1pt,fill, color=black] (z) at  ($(u1)!.33!(v1)$){};
            \node[draw,circle,minimum size=.5*\vtxSize,inner sep=1pt,fill, color=black] (z') at  ($(u1)!.66!(v1)$){};

			\draw [line width=\edgewidth,-,red1] (x1) -- (x2);
                \draw [line width=\edgewidth,-,red1] (y1) -- (y2);
                \draw [line width=\edgewidth,-,red1] (x1) -- (u1);
                \draw [line width=\edgewidth,-,red1] (x1) -- (v1);
                \draw [line width=\edgewidth,-,red1] (y1) -- (u1);
                \draw [line width=\edgewidth,-,red1] (y1) -- (v1);
                \draw [line width=\edgewidth,-,red1] (x2) -- (u2);
                \draw [line width=\edgewidth,-,red1] (x2) -- (v2);
                \draw [line width=\edgewidth,-,red1] (y2) -- (u2);
                \draw [line width=\edgewidth,-,red1] (y2) -- (v2);
                \draw [line width=\edgewidth,dashed,red1] (u1) -- (z);
                 \draw [line width=\edgewidth,dashed,red1] (z) -- (z');
                 \draw [line width=\edgewidth,dashed,red1] (v1) -- (z');

			\end{tikzpicture}
			\caption{A path between $u_1$ and $v_1$ gives a \cref{fig:graph16} subdivision.}
			\label{fig:disCycCase1_7}
			
		\end{subfigure}
		\caption{}
	\end{figure}
 
\noindent
{\it Case 2:} Here we view $C_2$ as the union of an $(x_2y_2)$-path $P$, $|P|\geq 2$, 
and the edge $x_2y_2$, which, by assumption is non-compliant. Let us consider first the case 
where $x_2,y_2$ separate $C_2$ from $H - C_2$. By \cref{lem:compliantOuterplanar}, 
$C_2$ is contained in a $2$-connected non-outerplanar subgraph which is disjoint from $H-C_2$. By \cref{lem:nonOuterStucture} the vertices 
$x_2$ and $y_2$ are contained a subdivision $F$, of either $K_4$ or $K_{2,3}$, disjoint from $H$. If $x_2$ and $y_2$ are not neighbors in $F$ then they are contained in some cycle of $F$, and we are back to Case 1 which we have already settled. 
Similarly, if $x_2$, $y_2$ are adjacent in $F$, then it is
easily seen that we can find a cycle $C'\subset F - x_2y_2$ and two disjoint paths from $x_2,y_2$ to $C'$, so we can again reduce the situations to Case 1.\\
So we may assume that $x_2,y_2$ do not disconnect $C_2$ from the rest of $H$. In particular, there exists a path $Q$ (possibly an edge) from an internal vertex, $u$, of $P$ to a vertex $v$ in $H-P$ which is otherwise disjoint from $H$. Up to symmetries there are three possibilities to consider: $Q$ connects between $u$ and
i) an internal vertex of $R_1$. ii) $x_1$, 
iii) $C_1-\{x_1,y_1\}$. In i), the situation reduces to Case 1, \cref{fig:disCycCase2_1} , which is already settled. In ii), G contains a subdivision of \cref{fig:graph2}, see \cref{fig:disCycCase2_2}. In iii), $G$ contains a subdivision of \cref{fig:graph5}.
\begin{figure}
		\centering
		\begin{subfigure}[t]{0.28\textwidth}
			\centering
			\begin{tikzpicture}
               \def\vtxSize{0.5cm}
                \def\width{1.25cm}
			\def\height{1.25cm}
                \def\edgewidth{1.25pt}
                \def\r{.75cm}
			\node[draw,circle,minimum size=\vtxSize,inner sep=0pt] (x1) at (-\width,\height) {$x_1$};
                \node[draw,circle,minimum size=\vtxSize,inner sep=0pt] (y1) at (-\width,-\height) {$y_1$};
                \node[draw,circle,minimum size=\vtxSize,inner sep=0pt] (x2) at  (\width, \height){$x_2$};
                \node[draw,circle,minimum size=\vtxSize,inner sep=0pt] (y2) at  (\width, -\height){$y_2$};
                \node[draw,circle,minimum size=\vtxSize,inner sep=0pt] (u1) at  (-\width-\r, 0){$u_1$};
                \node[draw,circle,minimum size=\vtxSize,inner sep=0pt] (v1) at  (-\width+\r, 0){$v_1$};
                \node[draw,circle,minimum size=\vtxSize,inner sep=0pt] (u2) at  (\width-\r, 0){$u_2$};
                \node[draw,circle,minimum size=\vtxSize,inner sep=0pt] (v2) at  (\width+\r, 0){$v_2$};
                \node[draw,circle,minimum size=.5*\vtxSize,inner sep=1pt,fill, color=black] (z) at  ($(u1)!.5!(v1)+ (0,\r/2)$){};
                \node[draw,circle,minimum size=.5*\vtxSize,inner sep=1pt,fill, color=black] (z') at  ($(u1)!.5!(v1)+ (0,-\r/2)$){};

			\draw [line width=\edgewidth,-,red1] (x1) -- (x2);
                \draw [line width=\edgewidth,-,red1] (y1) -- (y2);
                \draw [line width=\edgewidth,-,red1] (x1) -- (u1);
                \draw [line width=\edgewidth,-,red1] (x1) -- (v1);
                \draw [line width=\edgewidth,-,red1] (y1) -- (u1);
                \draw [line width=\edgewidth,-,red1] (y1) -- (v1);
                \draw [line width=\edgewidth,-] (x2) -- (u2);
                \draw [line width=\edgewidth,-,red1] (x2) -- (v2);
                \draw [line width=\edgewidth,-] (y2) -- (u2);
                \draw [line width=\edgewidth,-,red1] (y2) -- (v2);
                \draw [line width=\edgewidth,-,red1] (u1) -- (z);
                \draw [line width=\edgewidth,-,red1] (v1) -- (z);
                \draw [line width=\edgewidth,-,red1] (u1) -- (z');
                \draw [line width=\edgewidth,-,red1] (v1) -- (z');

			\end{tikzpicture}
			\caption{A subdivision of \cref{fig:graph10}.}
			\label{fig:disCycCase1_3}
			
		\end{subfigure}
            \hfill 
            \begin{subfigure}[t]{0.28\textwidth}
			\centering
			\begin{tikzpicture}
			\def\vtxSize{0.5cm}
                \def\width{1.25cm}
			\def\height{1.25cm}
                \def\edgewidth{1.5pt}
                \def\r{\width/2}
			\node[draw,circle,minimum size=\vtxSize,inner sep=0pt] (x1) at (-\width,\height) {$x_1$};
                \node[draw,circle,minimum size=\vtxSize,inner sep=0pt] (y1) at (-\width,-\height) {$y_1$};
                \node[draw,circle,minimum size=\vtxSize,inner sep=0pt] (x2) at  (\width, \height){$x_2$};
                \node[draw,circle,minimum size=\vtxSize,inner sep=0pt] (y2) at  (\width, -\height){$y_2$};
			\node[draw,circle,minimum size=.5*\vtxSize,inner sep=1pt,fill, color=black] (z) at  ($(x1)!.5!(x2)$){};
                \node[draw,circle,minimum size=\vtxSize,inner sep=0pt] (z') at  ($(x2)!.5!(y2) - (\r,0)$){$u$};

			\draw [line width=\edgewidth,dotted,red1] (x1) -- (z) ;
                \draw [line width=\edgewidth,dotted,red1] (x2) -- (z) ;
                \draw [line width=\edgewidth,dotted,red1] (y1) -- (y2);
                \draw [line width=\edgewidth,dotted,red1] (x1) to[out=-135, in=135] (y1);
                \draw [line width=\edgewidth,dotted,red1] (x1) to[out=-45, in=45] (y1);
                \draw [line width=\edgewidth,dotted] (x2) to[out=-135, in=90] (z');
                \draw [line width=\edgewidth,dotted,red1] (z') to[out=-90, in=135] (y2);
                \draw [line width=\edgewidth,-,red1] (x2) to[out=-45, in=45]   (y2);
                \draw [line width=\edgewidth,-,red1] (z) to (z');
      

			\end{tikzpicture}
			\caption{A path from $Q$ to $R_1$ recovers the previous case.}
			\label{fig:disCycCase2_1}
			
		\end{subfigure}
            \hfill 
            \begin{subfigure}[t]{0.28\textwidth}
			\centering
			\begin{tikzpicture}
			\def\vtxSize{0.5cm}
                \def\width{1.25cm}
			\def\height{1.25cm}
                \def\edgewidth{1.5pt}
                \def\r{\width/2}
			\node[draw,circle,minimum size=\vtxSize,inner sep=0pt] (x1) at (-\width,\height) {$x_1$};
                \node[draw,circle,minimum size=\vtxSize,inner sep=0pt] (y1) at (-\width,-\height) {$y_1$};
                \node[draw,circle,minimum size=\vtxSize,inner sep=0pt] (x2) at  (\width, \height){$x_2$};
                \node[draw,circle,minimum size=\vtxSize,inner sep=0pt] (y2) at  (\width, -\height){$y_2$};

                \node[draw,circle,minimum size=\vtxSize,inner sep=0pt] (z') at  ($(x2)!.5!(y2) - (\r,0)$){$u$};

			\draw [line width=\edgewidth,dotted,red1] (x1) -- (x2) ;
                \draw [line width=\edgewidth,dotted,red1] (y1) -- (y2);
                \draw [line width=\edgewidth,dotted,red1] (x1) to[out=-135, in=135] (y1);
                \draw [line width=\edgewidth,dotted,red1] (x1) to[out=-45, in=45] (y1);
                \draw [line width=\edgewidth,dotted] (x2) to[out=-135, in=90] (z');
                \draw [line width=\edgewidth,dotted,red1] (z') to[out=-90, in=135] (y2);
                \draw [line width=\edgewidth,-,red1] (x2) to[out=-45, in=45]   (y2);
                \draw [line width=\edgewidth,-,red1] (x1) to (z');
                \node[draw,circle,minimum size=.5*\vtxSize,inner sep=1pt,fill, color=black] (v) at  ($(x1)!.5!(y1) - (\r,0)$){};
                \node[draw,circle,minimum size=.5*\vtxSize,inner sep=1pt,fill, color=black] (v') at  ($(x1)!.5!(y1) + (\r,0)$){};

			\end{tikzpicture}
			\caption{An path from $P$ to $x_1$ gives a subdivision of \cref{fig:graph2}.}
			\label{fig:disCycCase2_2}
			
		\end{subfigure}
		\caption{}
	\end{figure}

\noindent
{\it Case 3:} Now each $C_i$ is the union of the edge $x_iy_i$ and an $(x_iy_i)$-path 
$P_i$ of length at least $2$. First assume that $x_2,y_w$ separate $C_2$ from $C_1$. As argued above, 
by \cref{lem:compliantOuterplanar} and \cref{lem:nonOuterStucture} the vertices $x_2$ and $y_2$ are contained in a subdivision $F$ of $K_{2,3}$ or $K_4$ which is disjoint from $C_1$. If $x_2$ and $y_2$ are not neighbors in $F$ then we can take $C_1$ and any cycle containing $x_2$ and $y_2$ and apply case 2. Similarly, if $x_2y_2$ is an edge in $F$ then we can take $C_1$ and a cycle in $F-x_2y_2$ and again apply case 2.\\
So we may assume that $x_2,y_2$  do not separate $C_2$ from $C_1$. This means that there is a path $Q$ (possibly an edge) connecting a vertex $u$ in $C_2-\{x_2,y_2\}$, i.e. an internal vertex of $P_2$, to a vertex in $H-C_2$. First assume that $Q$ is a path from $u$ to an internal vertex of $R_1$ (or $R_2$). 
(Recall that $R_1$ and $R_2$ are disjoint paths between $C_1$ and $C_2$, 
\cref{fig:disCycCase1_1}.)  
This creates the same situation as in Case 2, see e.g. \cref{fig:disCycCase2_1}.

So we may assume that $Q$ is a path from $u$ to $C_1$. Next suppose that $Q$ is a path connecting $u$ to an internal vertex of $P_1$. This gives us a
subdivision of the prism, \cref{fig:disCycCase3_1}, and as shown in \cite{CL},
a $2$-connected graph on $7$ vertices or more which contains a proper subdivision of the prism is not metrizable. 
We can therefore assume that the path $Q$ connects $u$ to the set $\{x_1,y_1\}$. The above discussion
allows us to assume as well that $x_1,y_1$ do not separate $C_1$ from $C_2$, and that there is a path $Q'$
from a vertex $u'\in C_1-\{x_1,y_1\}$ to the set $\{x_2,y_2\}$, which is internally disjoint from $H$. 
There are now up to symmetries 2 subcases to consider: i) $Q$ is a $(ux_1)$-path and $Q'$ is a $(u'x_2)$-path, ii) $Q$ is a $(ux_1)$-path and $Q'$ is $(u'y_2)$ path. In subcase i), $G$ contains a 
subdivision of the prism and has at least $7$ verices, \cref{fig:disCycCase3_2}, and as 
noted above, it is consequently non-metrizable. In subcase ii), notice that this graph contains a $K_{2,4}'$ subdivision, \cref{fig:disCycCase3_3}, which is not metrizable by \cref{lem:K24}.

\end{proof}
\begin{figure}
		\centering
		
		\hfill
            \begin{subfigure}[t]{0.28\textwidth}
			\centering
			\begin{tikzpicture}
			\def\vtxSize{0.5cm}
                \def\width{1.25cm}
			\def\height{1.25cm}
                \def\edgewidth{1.5pt}
                \def\r{\width/2}
			\node[draw,circle,minimum size=\vtxSize,inner sep=0pt] (x1) at (-\width,\height) {$x_1$};
                \node[draw,circle,minimum size=\vtxSize,inner sep=0pt] (y1) at (-\width,-\height) {$y_1$};
                \node[draw,circle,minimum size=\vtxSize,inner sep=0pt] (x2) at  (\width, \height){$x_2$};
                \node[draw,circle,minimum size=\vtxSize,inner sep=0pt] (y2) at  (\width, -\height){$y_2$};
		  \node[draw,circle,minimum size=\vtxSize,inner sep=0pt] (u1) at  ($(x1)!.5!(y1) + (\r,0)$){$u'$};
                \node[draw,circle,minimum size=\vtxSize,inner sep=0pt] (u2) at  ($(x2)!.5!(y2) - (\r,0)$){$u$};

			\draw [line width=\edgewidth,dotted,red1] (x1) -- (x2) ;
                \draw [line width=\edgewidth,dotted,red1] (y1) -- (y2);
                \draw [line width=\edgewidth,dotted,red1] (x1) to[out=-45, in=90] (u1);
                \draw [line width=\edgewidth,dotted,red1] (u1) to[out=-90, in=45] (y1);
                \draw [line width=\edgewidth,red1] (x1) to[out=-135, in=135] (y1);
                \draw [line width=\edgewidth,dotted,red1] (x2) to[out=-135, in=90] (u2);
                \draw [line width=\edgewidth,dotted,red1] (u2) to[out=-90, in=135] (y2);
                \draw [line width=\edgewidth,-,red1] (x2) to[out=-45, in=45]   (y2);
                \draw [line width=\edgewidth,-,red1] (u1) -- (u2);

			\end{tikzpicture}
			\caption{A path between $u$ and $u'$ gives a subdivision of the prism.}
			\label{fig:disCycCase3_1}
			
		\end{subfigure}
            \hfill
		\begin{subfigure}[t]{0.28\textwidth}
			\centering
			\begin{tikzpicture}
			\def\vtxSize{0.5cm}
                \def\width{1.25cm}
			\def\height{1.25cm}
                \def\edgewidth{1.5pt}
                \def\r{\width/2}
			\node[draw,circle,minimum size=\vtxSize,inner sep=0pt] (x1) at (-\width,\height) {$x_1$};
                \node[draw,circle,minimum size=\vtxSize,inner sep=0pt] (y1) at (-\width,-\height) {$y_1$};
                \node[draw,circle,minimum size=\vtxSize,inner sep=0pt] (x2) at  (\width, \height){$x_2$};
                \node[draw,circle,minimum size=\vtxSize,inner sep=0pt] (y2) at  (\width, -\height){$y_2$};
		  \node[draw,circle,minimum size=\vtxSize,inner sep=0pt] (u1) at  ($(x1)!.5!(y1) + (\r,0)$){$u'$};
                \node[draw,circle,minimum size=\vtxSize,inner sep=0pt] (u2) at  ($(x2)!.5!(y2) - (\r,0)$){$u$};

			\draw [line width=\edgewidth,dotted] (x1) -- (x2) ;
                \draw [line width=\edgewidth,dotted,red1] (y1) -- (y2);
                \draw [line width=\edgewidth,dotted,red1] (x1) to[out=-45, in=90] (u1);
                \draw [line width=\edgewidth,dotted,red1] (u1) to[out=-90, in=45] (y1);
                \draw [line width=\edgewidth, red1] (x1) to[out=-135, in=135] (y1);
                \draw [line width=\edgewidth,dotted,red1] (x2) to[out=-135, in=90] (u2);
                \draw [line width=\edgewidth,dotted,red1] (u2) to[out=-90, in=135] (y2);
                \draw [line width=\edgewidth,-,red1] (x2) to[out=-45, in=45]   (y2);
                \draw [line width=\edgewidth,-,red1, dotted] (x1) to (u2);
                \draw [line width=\edgewidth,-,red1, dotted] (x2) to (u1);

			\end{tikzpicture}
			\caption{Paths from $u'$ to $x_1$ and $u$ to $x_2$ yield a subdivision of the prism.}
			\label{fig:disCycCase3_2}
			
		\end{subfigure}
            \hfill 
            \begin{subfigure}[t]{0.28\textwidth}
			\centering
			\begin{tikzpicture}
			\def\vtxSize{0.5cm}
                \def\width{1.25cm}
			\def\height{1.25cm}
                \def\edgewidth{1.5pt}
                \def\r{\width/2}
			\node[draw,circle,minimum size=\vtxSize,inner sep=0pt] (x1) at (-\width,\height) {$x_1$};
                \node[draw,circle,minimum size=\vtxSize,inner sep=0pt] (y1) at (-\width,-\height) {$y_1$};
                \node[draw,circle,minimum size=\vtxSize,inner sep=0pt] (x2) at  (\width, \height){$x_2$};
                \node[draw,circle,minimum size=\vtxSize,inner sep=0pt] (y2) at  (\width, -\height){$y_2$};
		  \node[draw,circle,minimum size=\vtxSize,inner sep=0pt] (u1) at  ($(x1)!.5!(y1) + (\r,0)$){$u'$};
                \node[draw,circle,minimum size=\vtxSize,inner sep=0pt] (u2) at  ($(x2)!.5!(y2) - (\r,0)$){$u$};

			\draw [line width=\edgewidth,dotted,red1] (x1) -- (x2) ;
                \draw [line width=\edgewidth,dotted,red1] (y1) -- (y2);
                \draw [line width=\edgewidth,dotted,red1] (x1) to[out=-45, in=90] (u1);
                \draw [line width=\edgewidth,dotted] (u1) to[out=-90, in=45] (y1);
                \draw [line width=\edgewidth, red1] (x1) to[out=-135, in=135] (y1);
                \draw [line width=\edgewidth,dotted,red1] (x2) to[out=-135, in=90] (u2);
                \draw [line width=\edgewidth,dotted,red1] (u2) to[out=-90, in=135] (y2);
                \draw [line width=\edgewidth,-,red1] (x2) to[out=-45, in=45]   (y2);
                \draw [line width=\edgewidth,-,red1, dotted] (x1) to (u2);
                \draw [line width=\edgewidth,-,red1, dotted] (y2) to (u1);

			\end{tikzpicture}
			\caption{A subdivision of $K_{2,4}'$, with $x_1, y_2$ on the $2$-part.}
			\label{fig:disCycCase3_3}
			
		\end{subfigure}
		\caption{}
	\end{figure}

Our proof of \cref{thm:mainThm} uses the following result of Lov\'asz, \cite{L}.
Consider the graphs obtained by adding $1$, $2$ and $3$ edges to the side of $3$ vertices in $K_{3,n}$. Call them
$K_{3,n}'$, $K_{3,n}''$ and $K_{3,n}'''$, respectively.
\begin{lemma}\label{lem:Lovasz}
If a graph $G$ contains no disjoint cycles and satisfies $\delta(G)\geq 3$, then it is either $K_5$, a wheel, $K_{3,n}$, $K_{3,n}'$, $K_{3,n}''$ or $K_{3,n}'''$.
\end{lemma}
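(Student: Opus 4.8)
\noindent
The plan is \emph{not} to reprove Lov\'asz's theorem in full, but to derive the stated form from the general classification of graphs with no two vertex-disjoint cycles and then discard the irrelevant cases using $\delta(G)\ge 3$. Recall that Lov\'asz's theorem \cite{L} asserts that, up to attaching pendant trees, subdividing edges, and deleting some edges, every graph with no two vertex-disjoint cycles is one of: a graph having a vertex $v$ with $G-v$ a forest; $K_5$; a wheel $W_k$ with $k\ge 3$; or $K_{3,n}$ with up to three additional edges placed inside the side of size $3$ (the ``multi-spoke'' wheel variants of the multigraph version cannot occur because $G$ is simple). The first step is to record this list; the rest is to check that $\delta(G)\ge 3$ collapses it to exactly $K_5$, the wheels, and $K_{3,n},K_{3,n}',K_{3,n}'',K_{3,n}'''$.

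First I would eliminate the family ``$G-v$ is a forest''. If $G-v$ has at least two vertices then it has a vertex $u$ with $\deg_{G-v}(u)\le 1$ (a leaf or an isolated vertex of the forest), and since the only possible extra neighbour of $u$ in $G$ is $v$ we get $\deg_G(u)\le 2<3$; if $G-v$ has at most one vertex then $|V(G)|\le 2$, again forcing $\delta(G)\le 1$. Since $\delta(G)\ge 3$ rules out all vertices of degree at most $2$, no pendant tree and no subdivision vertex can be present, so $G$ must be one of the core graphs itself. It remains to handle the allowed edge deletions: deleting any edge of $K_{3,n}$ that joins the two sides drops a vertex of the $n$-side to degree $2$, which is forbidden, so only edges inside the $3$-side may be added or deleted, and the outcome is precisely $K_{3,n}^{(j)}$ for $j\in\{0,1,2,3\}$ (with $n\ge 3$, so that the $3$-side also has degree $\ge 3$). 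Finally $K_5$ and every wheel already have $\delta\ge 3$, and every small graph on the original list is a wheel ($K_4=W_3$, $W_4$, and so on) or $K_5$. This gives the lemma.

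The delicate point on this route is bookkeeping: one must quote the general theorem in a form that does permit edge deletions, and then make sure that every way of degenerating the four canonical families to a graph with $\delta\ge 3$ already appears on the final list, with no spurious small exception and no residual multigraph artifact. If instead a self-contained argument is wanted, the outline is: (a) $\delta(G)\ge 3$ forces $G$ to be $2$-connected, since a cut vertex $v$ splits $G$ into parts each of minimum internal degree $\ge 3$, hence each containing a cycle avoiding $v$, which gives two disjoint cycles; (b) a similar but more careful analysis of $2$-separations reduces to the $3$-connected case; (c) for $3$-connected $G$ one takes a non-separating induced cycle $C$ and analyses its bridges, showing that two bridges whose attachments interlace around $C$ create two disjoint cycles, so the bridge pattern is forced to be that of a wheel, of $K_5$, or of a $K_{3,n}$-type graph. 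Step (c), the $3$-connected bridge analysis, is the genuine obstacle, and is exactly the part one usually defers to \cite{L}.
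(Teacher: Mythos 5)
The paper does not actually prove this lemma: it is quoted directly from Lov\'asz \cite{L}, so your proposal --- which likewise defers the substantive classification (the bridge analysis of a non-separating cycle in the $3$-connected case) to \cite{L} and only supplies the bookkeeping needed to pass from the general form of the theorem to the minimum-degree-$3$ form --- is essentially the same approach, and that bookkeeping is the right way to reconcile the two statements. It does, however, contain one concrete slip: the restriction $n\ge 3$ is correct only for $K_{3,n}$ itself, not for the variants with edges added inside the $3$-side. For example $K_{3,2}'''$ has minimum degree $3$ (the $2$-side vertices have degree $3$, the $3$-side vertices degree $4$), has no two vertex-disjoint cycles (only $5$ vertices), and is isomorphic to $W_4'$, the $4$-wheel plus an edge between two nonadjacent rim vertices --- so it is neither a wheel nor $K_5$, and your closing claim that every small degenerate case is a wheel or $K_5$ misses it. This matters in context, since $W_4'$ is one of the outcomes of \cref{thm:mainThm}; the lemma must therefore be read with no lower bound on $n$ for the primed variants, exactly as the paper states it. With that restriction dropped (and noting $K_{3,2}''\cong W_4$ and $K_{3,1}'''\cong K_4=W_3$ are already wheels), your derivation goes through.
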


\begin{proof}{[\Cref{thm:mainThm}]} 
Let $G$ be a $2$-connected graph with no compliant edges and at least $11$ vertices.
We may assume that $G$ does not contain disjoint cycles,
for otherwise it is not metrizable by \cref{lem:disjointCycles}.\\ 
In order to use \cref{lem:Lovasz}, we need to eliminate all vertices of degree $2$ in $G$, 
since the lemma assumes $\delta(G)\geq 3$. If we suppress all vertices of degree $2$ in $G$,
the resulting graph $\tilde{G}$ indeed satisfies $\delta(\tilde{G})\geq 3$, though
it need not be a {\em simple} graph. It is not difficult to see that $\tilde{G}$ is
simple if and only if $G$ does not contain any parallel flat paths. In this view let us consider
what happens if $G$ contains parallel flat paths. Namely, there are at least two flat $(uv)$-paths in $G$ between a pair of
vertices $u,v\in G$. We show that there cannot be an additional vertex pair $\{x,y\} \neq \{u,v\}$ that is connected by parallel flat paths. Now $\{x,y\}$, $\{u,v\}$ cannot be disjoint, or else
these four flat paths form two disjoint cycles. So suppose that there are parallel flat paths between $x,y$ and $x,v$. Since $G$ is $2$-connected, there must be another path from $v$ to $y$ disjoint from $x$. But this forms a subdivision of \cref{fig:graph2}.\\
So say that $x,y$ is the (unique) pair of vertices between which there are parallel flat paths. If $x$ and $y$ are the only 
branch vertices of $G$
then $G$ is a subdivision of $K_{2,n}$, $n\geq 3$. But as shown in \cite{CL}, for $n\geq 4$ the graph
$K_{2,n}$ is metrizable, whereas every proper subdivision of $K_{2,n}$ is not metrizable,
since it contains a subdivision of \cref{fig:graph1}. In the remaining case $n=3$ and $G$ is a subdivided $K_{2,3}$.\\ 
So we may assume that $G$ has another branch vertex $z\neq x, y$. Since $G$ is $2$-connected there is an $(xy)$-path in $G$ containing $z$. Let $P$ be such a path of 
largest length. Note that every
cycle $Z$ in $G$ must contain either $x$ or $y$. Otherwise, $Z$
along with the parallel $xy$ paths, form two disjoint cycles. It follows that if 
and $Q$ is a $(uv)$-path internally disjoint from $P$ with $u,v\in P$,
then $\{u,v\}\cap\{x,y\}\neq \emptyset$. Otherwise, $Q$ along with the $(uv)$-subpath of $P$ forms a a cycle containing neither $x$ nor $y$. \\
We first suppose that there exists a vertex $u\in P - \{x,y\}$ with a neighbor $v\notin P$. Since $G$ is $2$-connected there are two paths from $v$ to $P$ disjoint except at $v$. At least one of these paths does not contain $u$, and gluing this path with the edge $uv$ we obtain a path $Q$ from $u$ to another vertex in $P$. As was observed above, this other endpoint must be either $x$ or $y$. Say $Q$ is a $(ux)$-path, and note that $u$ and $x$ are not neighbors in $P$ for otherwise we can use $Q$ to replace $P$ by a longer $(xy)$-path, contradicting the maximality of $P$'s length. Then the graph $H\cup P \cup Q$ contains a subdivision of \cref{fig:graph2}, implying $G$ is not metrizable.\\
In the remaining case every internal vertex of $P$ has all its neighbors in $P$. Given an internal vertex of $u$ in $P$, let us call $w$ an additional neighbor of $u$ if $uw$ is an edge in $G$ and but not an edge in $P$. Note by assumption, all additional neighbors are all also vertices in $P$. Moreover, by our above observations if $w$ is an additional neighbor of $u$ then $w\in \{x,y\}$. (Recall that otherwise we would obtain disjoint cycles in $G$.) Recall that $P$ has at least one vertex $z$, of degree at least $3$. So $z$ has at least one additional neighbor, say $x$. Let $u$ be the vertex in $P$ such that $x$ is an additional neighbor of $u$ and the distance between $u$ and $x$ along $P$ is minimized. Since $ux$ is not a compliant edge, the $(ux)$-subpath of $P$ is not flat. In particular, there must be a vertex, $v$, of degree $3$ along this subpath. This vertex $v$ has $x$ or $y$ as an additional neighbor. Also, since $u$ was chosen to minimizes the distance to $x$, $v$ must have $y$ as an additional neighbor. Along with the flat $(xy)$-paths we obtained a subdivision of $K_{2,4}'$. It follows that $G$ is not metrizable by \cref{lem:K24}.\\
In the eventual case $G$ has no parallel flat paths, so
we may suppress all its degree-$2$ vertices to obtain a simple graph $\tilde{G}$ such that $\delta(\tilde{G})\geq 3$. 
By \cref{lem:Lovasz} $\tilde{G}$ is either $K_5$, $W_n$, $K_{3,n}$, $K_{3,n}'$, $K_{3,n}''$ or $K_{3,n}'''$. As shown in \cite{CL}, every $2$-connected non-planar graph with at least $8$ vertices is non-metrizable. 
Since $G$ has at least $11$ vertices this excludes $K_5$, $K_{3,n}$, $K_{3,n}'$, $K_{3,n}''$ or $K_{3,n}'''$ for $n\geq 3$.
To exclude $W_n$ for $n\geq 5$, recall
from \cite{CL} that a $2$-connected graph containing $W_5$ with at least $7$ vertices is non-metrizable. The
possibilities for $\tilde{G}$ that remain are precisely $K_4$, $W_4$ and $W_4'$. 

\end{proof}
Next we prove \cref{cor:outerplanar}.
\begin{proof}{[\Cref{cor:outerplanar}]} 
To prove the claim we use the following procedure: we first iteratively remove all compliant edges in $G$ to obtain a graph $G_t$ with no compliant edges and conclude that there exists a vertex $x$ such that $G_t-x$ is outerplanar. The last step is to recover the graph $G-x$ by iteratively adding back the compliant edges we removed while maintining outerplanarity. In this last step, we use the fact that an outerplanar graph remains outerplanar after adding a compliant edge, \cref{prop:compliantEdges} (2).\\
We first observe that every compliant edge $e$ in $G$, is also compliant in 
$G-x$, for every vertex
$x\in G$ of degree at least $3$ that is not incident with $e$. 
Start with $G_0=G$ and define the graphs
$G_0,G_1,\dots, G_t$, where $G_{i+1}$ is obtained from $G_{i}$ by deleting
some compliant edge $e_i$ of $G_i$.
This sequence ends with $G_t$ that has no compliant edges. 
Suppose that there exists a vertex $x\in G_t$ of degree at least $3$ such that $G_t-x$ is outerplanar. By our initial observation, if $e_i$ is not incident with $x$ then $e_i$ is a compliant edge in $G_i-x$. It follows that $G-x$ can be constructed from $G_t-x$ by iteratively adding compliant edge. By \cref{prop:compliantEdges}, 
$G$ is also outerplanar. \\
Why is there such a vertex $x$ in $G_t$? By
\cref{prop:compliantEdges}, $G_t$ is $2$-connected and so by \cref{thm:mainThm} it is either $K_{2,n}$ or a subdivision of $C_3$, $K_{2,3}$, $K_4$, $W_4$ or $W_4'$. Unless $G_t$ is a subdivision of $W_4'$ it is clear such a vertex exists and so it suffices to consider this case. But
$W_4'$ is the wheel $W_4$ plus an additional edge $e$, so $G_t =H\cup P$ where $H$ is the subdivision of $W_4$ and $P$ is the subdivision of $e$. Notice that if $|P|\geq 2$ then $G_t$ contains a subdivision of $K_{2,4}'$, and by \cref{lem:K24}, $G_t$ is not metrizable.  It follows that $P$ is just an edge and $G_n$ can be made outerplanar by removing the single vertex of degree $4$ in $H$.
\end{proof}
\section{Open Questions}
While the structure of metrizable graphs is now much better understood,
many questions remain open. For instance, \cref{thm:mainThm} shows that, up to compliant edges, large metrizable graphs are either $K_{2,n}$ or subdivisions of $C_3,K_4,K_{2,3},W_4$ or $W_4'$, but the converse is far from true. Indeed, many of the non-metrizable graphs in \cref{fig:zoo} are subdivisions of these graphs. Our ignorance is perhaps best illustrated by $\Theta$-graphs. 
Recall that $\Theta_{a,b,c}$ consists of two vertices $u$ and $v$ and three 
internally disjoint $(uv)$-paths of length $a$, $b$ and $c$. As mentioned already in 
\cite{CL}, we still do not know in full which $\Theta$-graphs are metrizable. 
\begin{open}
Which subdivisions of $K_4,K_{2,3},W_4$ and $W_4'$ are metrizable and which are not?
\end{open}
In \cite{CL} it was shown that graph metrizability can be decided in polynomial time. The proof of this relied heavily on the graph minor theory of Robertson and Seymour, \cite{RS}, and therefore a practical metrizability algorithm still remains elusive. On the other hand,
our results here show that metrizable graphs have a very simple structure. In this view
we could hope to use these insights and obtain a practically efficient metrizability algorithm. Unfortunately, even such simple graphs have exponentially many consistent path systems and so an exhuastive checking of all consistent path systems is still inefficient. 
\begin{open}
Find a practically efficient algorithm for graph metrizability.
\end{open}
It is suggestive to go beyond decision problems into the realm
of {\em promise problems}, and ask for example: 
\begin{open}
Given a non-metrizable graph, can we efficiently
find a consistent non-metrizable path system?
\end{open}
As mentioned in section 2, given non-metric path system we can find a ``hand-checkable'' certificate of non-metrizability using LP-duality.
On the other hand, it is not clear that such a certificate exists for metrizable graphs.
\begin{open}
Do there exist ``humanly verifiable'' certificates of metrizability?
\end{open}
Our definitions do not posit that path systems use all the edges in the graph
under consideration. In this view we find the concept of neighborly metrizability 
particularly appealing. A {\em neighborly consistent path system} is a consistent path 
system where every edge is
the chosen path between its two vertices. We say a graph $G$ is {\em neighborly metrizable}
if every consistent neighborly path system in $G$ is induced by a metric. 
(For example, $K_n$ is always neighborly metrizable but never metrizable for $n\geq 7$.) 
\begin{open}
    Characterize the family of graph which are neighborly metrizable.
\end{open}
Unlike metrizable graphs, neighborly metrizable graphs are not closed under topological minors. Therefore, different tools are necessary to tackle this problem.
\begin{figure}
		\centering
		\begin{subfigure}{0.175\textwidth}
			\centering
			\resizebox{\textwidth}{!}{
				\begin{tikzpicture}
				
				\node[draw,circle,fill] (2) at (1*360/5 +90: 5cm) {$2$};
				\node[draw,circle,fill]  (3) at (2*360/5 +90: 5cm) {$3$};
				\node[draw,circle,fill] (4) at (3*360/5 +90: 5cm) {$4$};
				\node[draw,circle,fill] (5) at (4*360/5 +90: 5cm) {$5$};
				\node[draw,circle,fill] (1) at ($(2)!0.5!(5)$) {$1$};
				\node[draw,circle,fill] (6) at ($(2)!0.5!(5) + (0,-3)$) {$6$};
				\node[draw,circle,fill] (7) at  ($(2)!0.5!(5) + (0,3)$) {$7$};

				\draw [line width=3pt,-] (1) -- (2);
				\draw [line width=3pt,-] (2) -- (3);
				\draw [line width=3pt,-] (3) -- (4);
				\draw [line width=3pt,-] (4) -- (5);
				\draw [line width=3pt,-] (5) -- (1);
				\draw [line width=3pt,-] (2) -- (6);
				\draw [line width=3pt,-] (5) -- (6);
				\draw [line width=3pt,-] (2) -- (7);
				\draw [line width=3pt,-] (5) -- (7);
				
				\end{tikzpicture}
			}
			\caption{Graph 1}
			\label[graph]{fig:graph1}
		\end{subfigure}
		\hfill
		\begin{subfigure}{0.175\textwidth}
			\centering
			\resizebox{\textwidth}{!}{
				\begin{tikzpicture}

				\node[draw,circle,fill] (1) at (0*360/5 +90: 5cm) {$1$};
				\node[draw,circle,fill] (2) at (1*360/5 +90: 5cm) {$2$};
				\node[draw,circle,fill] (3) at (2*360/5 +90: 5cm) {$3$};
				\node[draw,circle,fill] (4) at (3*360/5 +90: 5cm) {$4$};
				\node[draw,circle,fill] (5) at (4*360/5 +90: 5cm) {$5$};
				
				\node[draw,circle,fill] (6) at ($(1)!0.5!(3)$) {$6$};
				\node[draw,circle,fill] (7) at  ($(1)!0.5!(4)$) {$7$};

				\draw [line width=3pt,-] (1) -- (2);
				\draw [line width=3pt,-] (2) -- (3);
				\draw [line width=3pt,-] (3) -- (4);
				\draw [line width=3pt,-] (4) -- (5);
				\draw [line width=3pt,-] (5) -- (1);
				\draw [line width=3pt,-] (1) -- (6);
				\draw [line width=3pt,-] (3) -- (6);
				\draw [line width=3pt,-] (1) -- (7);
				\draw [line width=3pt,-] (4) -- (7);
				
				\end{tikzpicture}
			}
			\caption{Graph 2}
			\label[graph]{fig:graph2}
		\end{subfigure}
		\hfill
		\begin{subfigure}{0.175\textwidth}
			\centering
			\resizebox{\textwidth}{!}{
				\begin{tikzpicture}

				\node[draw,circle,fill] (1) at (0*360/5 +90: 5cm) {$1$};
				\node[draw,circle,fill]  (2) at (1*360/5 +90: 5cm) {$2$};
				\node[draw,circle,fill] (3) at (2*360/5 +90: 5cm) {$3$};
				\node[draw,circle,fill]  (4) at (3*360/5 +90: 5cm) {$4$};
				\node[draw,circle,fill] (5) at (4*360/5 +90: 5cm) {$5$};
				
				\node[draw,circle,fill] (6) at ($(1)!0.5!(3)$) {$6$};
				\node[draw,circle,fill] (7) at  ($(1)!0.5!(4)$) {$7$};

				\draw [line width=3pt,-] (1) -- (2);
				\draw [line width=3pt,-] (2) -- (3);
				\draw [line width=3pt,-] (3) -- (4);
				\draw [line width=3pt,-] (4) -- (5);
				\draw [line width=3pt,-] (5) -- (1);
				\draw [line width=3pt,-] (6) -- (7);
				\draw [line width=3pt,-] (2) -- (6);
				\draw [line width=3pt,-] (5) -- (7);
				\draw [line width=3pt,-] (3) -- (7);
				\draw [line width=3pt,-] (4) -- (6);
				
				\end{tikzpicture}
			}
			\caption{Graph 3}
			\label[graph]{fig:graph3}
		\end{subfigure}
            \\
		\begin{subfigure}{0.175\textwidth}
			\centering
			\resizebox{\textwidth}{!}{
				\begin{tikzpicture}

				\node[draw,circle,fill] (1) at (0*360/5 +90: 5cm) {$1$};
				\node[draw,circle,fill]  (2) at (1*360/5 +90: 5cm) {$2$};
				\node[draw,circle,fill] (3) at (2*360/5 +90: 5cm) {$3$};
				\node[draw,circle,fill] (4) at (3*360/5 +90: 5cm) {$4$};
				\node[draw,circle,fill]  (5) at (4*360/5 +90: 5cm) {$5$};
				
				\node (m1) at ($(2)!.5! (3)$) {};
				\node (m2) at ($(4)!.5! (5)$) {};
				\node[draw,circle,fill] (6) at ($(m1)!.8!270:(2)$) {$6$};
				\node[draw,circle,fill] (7) at  ($(m2)!.8!90:(5)$) {$7$};

				\draw [line width=3pt,-] (1) -- (2);
				\draw [line width=3pt,-] (2) -- (3);
				\draw [line width=3pt,-] (3) -- (4);
				\draw [line width=3pt,-] (4) -- (5);
				\draw [line width=3pt,-] (5) -- (1);
				\draw [line width=3pt,-] (6) -- (7);
				\draw [line width=3pt,-] (2) -- (6);
				\draw [line width=3pt,-] (5) -- (7);
				\draw [line width=3pt,-] (3) -- (6);
				\draw [line width=3pt,-] (4) -- (7);

				\end{tikzpicture}
			}
			\caption{Graph 4}
			\label[graph]{fig:graph4}
		\end{subfigure}
            \hfill
		\begin{subfigure}{0.175\textwidth}
			\centering
			\resizebox{\textwidth}{!}{
				\begin{tikzpicture}
				
				\node[draw,circle,fill] (1) at (0*360/5 +90: 5cm) {$1$};
				\node[draw,circle,fill] (2) at (1*360/5 +90: 5cm) {$2$};
				\node[draw,circle,fill] (3) at (2*360/5 +90: 5cm) {$3$};
				\node[draw,circle,fill] (4) at (3*360/5 +90: 5cm) {$4$};
				\node[draw,circle,fill] (5) at (4*360/5 +90: 5cm) {$5$};
				\node[draw,circle,fill] (6) at ($(2)!.5!(5)+(0,-1)$) {$6$};
				\node[draw,circle,fill] (7) at ($(2)!.5!(5) + (0,-3)$) {$7$};
				
				\draw [line width=3pt,-] (1) -- (2);
				\draw [line width=3pt,-] (2) -- (3);
				\draw [line width=3pt,-] (3) -- (4);
				\draw [line width=3pt,-] (4) -- (5);
				\draw [line width=3pt,-] (5) -- (1);
				\draw [line width=3pt,-] (6) -- (7);
				\draw [line width=3pt,-] (2) -- (6);
				\draw [line width=3pt,-] (5) -- (6);
				\draw [line width=3pt,-] (3) -- (7);
				\draw [line width=3pt,-] (4) -- (7);
				
				\end{tikzpicture}
			}
			\caption{Graph 5}
			\label[graph]{fig:graph5}
		\end{subfigure}
		\hfill
		\begin{subfigure}{0.175\textwidth}
			\centering
			\resizebox{\textwidth}{!}{
				\begin{tikzpicture}
				
				\node[draw,circle,fill] (1) at (0*360/7 +90: 5cm) {$1$};
				\node[draw,circle,fill] (2) at (1*360/7 +90: 5cm) {$2$};
				\node[draw,circle,fill] (3) at (2*360/7 +90: 5cm) {$3$};
				\node[draw,circle,fill] (4) at (3*360/7 +90: 5cm) {$4$};
				\node[draw,circle,fill] (5) at (4*360/7 +90: 5cm) {$5$};
				\node[draw,circle,fill] (6) at (5*360/7 +90: 5cm) {$6$};
				\node[draw,circle,fill] (7) at (6*360/7 +90: 5cm) {$7$};
				\node[draw,circle,fill] (8) at (0,0) {$8$};
				
				\draw [line width=3pt,-] (1) -- (2);
				\draw [line width=3pt,-] (2) -- (3);
				\draw [line width=3pt,-] (3) -- (4);
				\draw [line width=3pt,-] (4) -- (5);
				\draw [line width=3pt,-] (5) -- (6);
				\draw [line width=3pt,-] (6) -- (7);
				\draw [line width=3pt,-] (7) -- (1);
				\draw [line width=3pt,-] (1) -- (8);
				\draw [line width=3pt,-] (3) -- (8);
				\draw [line width=3pt,-] (6) -- (8);
				
				\end{tikzpicture}
			}
			\caption{Graph 6}
			\label[graph]{fig:graph6}
		\end{subfigure}
            \\
		\begin{subfigure}{0.175\textwidth}
			\centering
			\resizebox{\textwidth}{!}{
				\begin{tikzpicture}
				
				\node[draw,circle,fill] (1) at (0*360/5 +90: 5cm) {$1$};
				\node[draw,circle,fill] (2) at (1*360/5 +90: 5cm) {$2$};
				\node[draw,circle,fill] (3) at (2*360/5 +90: 5cm){$3$};
				\node[draw,circle,fill] (4) at (3*360/5 +90: 5cm) {$4$};
				\node[draw,circle,fill] (5) at (4*360/5 +90: 5cm)  {$5$};
				\node[draw,circle,fill] (6) at (0,0) {$6$};
				\node[draw,circle,fill] (7) at ($(1)!.5!(6)$) {$7$};
				\node[draw,circle,fill] (8) at ($(3)!.5!(4)$) {$8$};
				
				\draw [line width=3pt,-] (1) -- (2);
				\draw [line width=3pt,-] (1) -- (5);
				\draw [line width=3pt,-] (1) -- (7);
				\draw [line width=3pt,-] (2) -- (3);
				\draw [line width=3pt,-] (3) -- (8);
				\draw [line width=3pt,-] (4) -- (5);
				\draw [line width=3pt,-] (4) -- (8);
				\draw [line width=3pt,-] (3) -- (6);
				\draw [line width=3pt,-] (4) -- (6);
				\draw [line width=3pt,-] (6) -- (7);
				
				\end{tikzpicture}
			}
			\caption{Graph 7}
			\label[graph]{fig:graph7}
		\end{subfigure}
		\hfill
		\begin{subfigure}{0.175\textwidth}
			\centering
			\resizebox{\textwidth}{!}{
				
				\begin{tikzpicture}
				
				\node[draw,circle,fill] (1) at (0*360/5 +90: 5cm) {$1$};
				\node[draw,circle,fill] (2) at (1*360/5 +90: 5cm) {$2$};
				\node[draw,circle,fill] (3) at (2*360/5 +90: 5cm){$3$};
				\node[draw,circle,fill] (4) at (3*360/5 +90: 5cm) {$4$};
				\node[draw,circle,fill] (5) at (4*360/5 +90: 5cm)  {$5$};
				\node[draw,circle,fill] (6) at (0,0) {$6$};
				\node[draw,circle,fill] (7) at ($(1)!.33!(6)$) {$7$};
				\node[draw,circle,fill] (8) at ($(1)!.66!(6)$) {$8$};
				
				\draw [line width=3pt,-] (1) -- (2);
				\draw [line width=3pt,-] (1) -- (5);
				\draw [line width=3pt,-] (1) -- (7);
				\draw [line width=3pt,-] (2) -- (3);
				\draw [line width=3pt,-] (3) -- (4);
				\draw [line width=3pt,-] (4) -- (5);
				\draw [line width=3pt,-] (6) -- (7);
				\draw [line width=3pt,-] (7) -- (8);
				\draw [line width=3pt,-] (3) -- (6);
				\draw [line width=3pt,-] (4) -- (6);
				
				\end{tikzpicture}
			}
			\caption{Graph 8}
			\label[graph]{fig:graph8}
		\end{subfigure}
            \hfill
		\begin{subfigure}{0.175\textwidth}
			\centering
			\resizebox{\textwidth}{!}{
				\begin{tikzpicture}

				\node[draw,circle,fill] (1) at (0*360/5 +90: 5cm) {$1$};
				\node[draw,circle,fill]  (2) at (1*360/5 +90: 5cm) {$2$};
				\node[draw,circle,fill] (3) at (2*360/5 +90: 5cm) {$3$};
				\node[draw,circle,fill] (4) at (3*360/5 +90: 5cm) {$4$};
				\node[draw,circle,fill]  (5) at (4*360/5 +90: 5cm) {$5$};
				\node[draw,circle,fill]  (6) at (0,0) {$6$};
				\node[draw,circle,fill]  (7) at ($(3)!0.5!(6)$) {$7$};
				\node[draw,circle,fill] (8) at ($(4)!0.5!(6)$) {$8$};

				\draw [line width=3pt,-] (1) -- (2);
				\draw [line width=3pt,-] (1) -- (5);
				\draw [line width=3pt,-] (1) -- (6);
				\draw [line width=3pt,-] (2) -- (3);
				\draw [line width=3pt,-] (3) -- (4);
				\draw [line width=3pt,-] (3) -- (7);
				\draw [line width=3pt,-] (4) -- (5);
				\draw [line width=3pt,-] (4) -- (8);
				\draw [line width=3pt,-] (6) -- (7);
				\draw [line width=3pt,-] (6) -- (8);

				\end{tikzpicture}
			}
			\caption{Graph 9}
			\label[graph]{fig:graph9}
		\end{subfigure}
		\\
            \begin{subfigure}{0.175\textwidth}
			\centering
			\resizebox{\textwidth}{!}{
				\begin{tikzpicture}
				
				\node[draw,circle,fill] (1) at (0*360/5 +90: 5cm) {$1$};
				\node[draw,circle,fill] (2) at (1*360/5 +90: 5cm) {$2$};
				\node[draw,circle,fill] (3) at (2*360/5 +90: 5cm) {$3$};
				\node[draw,circle,fill] (4) at (3*360/5 +90: 5cm) {$4$};
				\node[draw,circle,fill] (5) at (4*360/5 +90: 5cm) {$5$};
                    \node[draw,circle,fill] (6) at (0,-1.6) {$6$};
				\node[draw,circle,fill] (7) at ($(2)!.25!(5)$) {$7$};
				\node[draw,circle,fill] (8) at ($(2)!.75!(5)$) {$8$};
				
				\draw [line width=3pt,-] (1) -- (2);
                    \draw [line width=3pt,-] (1) -- (5);
                    \draw [line width=3pt,-] (1) -- (7);
                    \draw [line width=3pt,-] (1) -- (8);
                    \draw [line width=3pt,-] (2) -- (3);
                    \draw [line width=3pt,-] (2) -- (6);
                    \draw [line width=3pt,-] (3) -- (4);
                    \draw [line width=3pt,-] (4) -- (5);
                    \draw [line width=3pt,-] (5) -- (6);
                    \draw [line width=3pt,-] (6) -- (7);
                    \draw [line width=3pt,-] (6) -- (8);

				\end{tikzpicture}
			}
			\caption{Graph 10}
			\label[graph]{fig:graph10}
		\end{subfigure}
            \hfill
            \begin{subfigure}{0.175\textwidth}
			\centering
			\resizebox{\textwidth}{!}{
				\begin{tikzpicture}
				
				\node[draw,circle,fill] (1) at (-4.5,0) {$1$};
				\node[draw,circle,fill] (2) at (4.5,0) {$2$};
				\node[draw,circle,fill] (3) at ($(1)!.5!(2)$) {$3$};
                    \node[draw,circle,fill] (4) at ($(1)!.5!(2)+ (0,-2)$) {$4$};
                    \node[draw,circle,fill] (5) at ($(1)!.5!(2)+ (0,2)$) {$5$};
                    \node[draw,circle,fill] (6) at ($(1)!.5!(2)+ (0,6)$) {$6$};
                    \node[draw,circle,fill] (7) at ($(5)!.33!(6)$) {$7$};
                    \node[draw,circle,fill] (8) at ($(5)!.66!(6)$) {$8$};

				\draw [line width=3pt,-] (1) -- (3);
                    \draw [line width=3pt,-] (1) -- (4);
                    \draw [line width=3pt,-] (1) -- (5);
                    \draw [line width=3pt,-] (1) -- (6);
                    \draw [line width=3pt,-] (2) -- (3);
                    \draw [line width=3pt,-] (2) -- (4);
                    \draw [line width=3pt,-] (2) -- (5);
                    \draw [line width=3pt,-] (2) -- (6);
                    \draw [line width=3pt,-] (5) -- (7);
                    \draw [line width=3pt,-] (7) -- (8);
                    \draw [line width=3pt,-] (6) -- (8);

				\end{tikzpicture}
			}
			\caption{Graph 11}
			\label[graph]{fig:graph11}
		\end{subfigure}
            \hfill
            \begin{subfigure}{0.175\textwidth}
			\centering
			\resizebox{\textwidth}{!}{
				\begin{tikzpicture}
				
				\node[draw,circle,fill] (1) at (-4.5,0) {$1$};
				\node[draw,circle,fill] (2) at (4.5,0) {$2$};
                    \node[draw,circle,fill] (3) at ($(1)!.5!(2)+ (0,6)$) {$3$};
                    \node[draw,circle,fill] (6) at ($(1)!.5!(2)+ (0,-2)$) {$6$};
                    \node[draw,circle,fill] (4) at ($(3)!.4!(6)$) {$4$};
                    \node[draw,circle,fill] (5) at ($(3)!.8!(6)$) {$5$};
                    \node[draw,circle,fill] (7) at ($(3)!.2!(6)$) {$7$};
                    \node[draw,circle,fill] (8) at ($(3)!.6!(6)$) {$8$};

				\draw [line width=3pt,-] (1) -- (3);
                    \draw [line width=3pt,-] (1) -- (4);
                    \draw [line width=3pt,-] (1) -- (5);
                    \draw [line width=3pt,-] (1) -- (6);
                    \draw [line width=3pt,-] (2) -- (3);
                    \draw [line width=3pt,-] (2) -- (4);
                    \draw [line width=3pt,-] (2) -- (5);
                    \draw [line width=3pt,-] (2) -- (6);
                    \draw [line width=3pt,-] (3) -- (7);
                    \draw [line width=3pt,-] (4) -- (7);
                    \draw [line width=3pt,-] (4) -- (8);
                    \draw [line width=3pt,-] (5) -- (8);

				\end{tikzpicture}
			}
			\caption{Graph 12}
			\label[graph]{fig:graph12}
		\end{subfigure}
            \\
		\begin{subfigure}{0.175\textwidth}
			\centering
			\resizebox{\textwidth}{!}{
				\begin{tikzpicture}
				
				\node[draw,circle,fill] (1) at (0*360/6 +120: 5cm) {$1$};
				\node[draw,circle,fill] (2) at (1*360/6 +120: 5cm) {$2$};
				\node[draw,circle,fill] (3) at (2*360/6 +120: 5cm) {$3$};
				\node[draw,circle,fill] (5) at (3*360/6 +120: 5cm) {$5$};
				\node[draw,circle,fill] (6) at (4*360/6 +120: 5cm) {$6$};
				\node[draw,circle,fill] (7) at (5*360/6 +120: 5cm) {$7$};
				\node[draw,circle,fill] (4) at ($(3)!.5!(5)$) {$4$};
				\node[draw,circle,fill] (8) at ($(1)!.5!(3)$) {$8$};
				\node[draw,circle,fill] (9) at ($(5)!.5!(7)$) {$9$};
				
				\draw [line width=3pt,-] (1) -- (2);
				\draw [line width=3pt,-] (1) -- (7);
				\draw [line width=3pt,-] (1) -- (8);
				\draw [line width=3pt,-] (2) -- (3);
				\draw [line width=3pt,-] (3) -- (4);
				\draw [line width=3pt,-] (3) -- (8);
				\draw [line width=3pt,-] (4) -- (5);
				\draw [line width=3pt,-] (5) -- (6);
				\draw [line width=3pt,-] (5) -- (9);
				\draw [line width=3pt,-] (6) -- (7);
				\draw [line width=3pt,-] (7) -- (9);
				
				\end{tikzpicture}
			}
			\caption{Graph 13}
			\label[graph]{fig:graph13}
		\end{subfigure}
		\hfill
            \begin{subfigure}{0.175\textwidth}
			\centering
			\resizebox{\textwidth}{!}{
				\begin{tikzpicture}
				
				\node[draw,circle,fill] (1) at (0*360/7 +90: 5cm) {$1$};
				\node[draw,circle,fill] (2) at (1*360/7 +90: 5cm) {$2$};
				\node[draw,circle,fill] (3) at (2*360/7 +90: 5cm) {$3$};
				\node[draw,circle,fill] (4) at (3*360/7 +90: 5cm) {$4$};
				\node[draw,circle,fill] (5) at (4*360/7 +90: 5cm) {$5$};
				\node[draw,circle,fill] (6) at (5*360/7 +90: 5cm) {$6$};
                    \node[draw,circle,fill] (7) at (6*360/7 +90: 5cm) {$7$};
                    \node[draw,circle,fill] (8) at ($(3)!.5!(6)$) {$8$};
                    \node[draw,circle,fill] (9) at ($(2)!.5!(7) - (0,1.25)$) {$9$};

				\draw [line width=3pt,-] (1) -- (2);
                    \draw [line width=3pt,-] (1) -- (7);
				\draw [line width=3pt,-] (2) -- (3);
                    \draw [line width=3pt,-] (2) -- (9);
                    \draw [line width=3pt,-] (3) -- (4);
                    \draw [line width=3pt,-] (3) -- (8);
                    \draw [line width=3pt,-] (4) -- (5);
                    \draw [line width=3pt,-] (5) -- (6);
                    \draw [line width=3pt,-] (6) -- (7);
                    \draw [line width=3pt,-] (6) -- (8);
                    \draw [line width=3pt,-] (7) -- (9);

				\end{tikzpicture}
			}
			\caption{Graph 14}
			\label[graph]{fig:graph14}
		\end{subfigure}
            \hfill
		\begin{subfigure}{0.175\textwidth}
			\centering
			\resizebox{\textwidth}{!}{
				\begin{tikzpicture}
				
				\node[draw,circle,fill] (1) at (0*360/6 +120: 5cm) {$1$};
				\node[draw,circle,fill] (2) at (1*360/6 +120: 5cm) {$2$};
				\node[draw,circle,fill] (3) at (2*360/6 +120: 5cm) {$3$};
				\node[draw,circle,fill] (4) at (3*360/6 +120: 5cm) {$4$};
				\node[draw,circle,fill] (5) at (4*360/6 +120: 5cm) {$5$};
				\node[draw,circle,fill] (6) at (5*360/6 +120: 5cm) {$6$};
				\node[draw,circle,fill] (7) at ($(2)!.25!(5)$) {$7$};
				\node[draw,circle,fill] (8) at ($(2)!.5!(5)$) {$8$};
				\node[draw,circle,fill] (9) at ($(2)!.75!(5)$) {$9$};
				
				\draw [line width=3pt,-] (1) -- (2);
				\draw [line width=3pt,-] (1) -- (6);
				\draw [line width=3pt,-] (2) -- (3);
				\draw [line width=3pt,-] (2) -- (7);
				\draw [line width=3pt,-] (3) -- (4);
				\draw [line width=3pt,-] (4) -- (5);
				\draw [line width=3pt,-] (5) -- (6);
				\draw [line width=3pt,-] (5) -- (9);
				\draw [line width=3pt,-] (7) -- (8);
				\draw [line width=3pt,-] (8) -- (9);
				
				\end{tikzpicture}
			}
			\caption{Graph 15}
			\label[graph]{fig:graph15}
		\end{subfigure}
            \\
		\begin{subfigure}{0.175\textwidth}
			\centering
			\resizebox{\textwidth}{!}{
				\begin{tikzpicture}
				
				\node[draw,circle,fill] (1) at (0*360/6 +30: 5cm) {$1$};
				\node[draw,circle,fill] (2) at (1*360/6 +30: 5cm) {$2$};
				\node[draw,circle,fill] (3) at (2*360/6 +30: 5cm) {$3$};
				\node[draw,circle,fill] (4) at (3*360/6 +30: 5cm) {$4$};
				\node[draw,circle,fill] (5) at (4*360/6 +30: 5cm) {$5$};
				\node[draw,circle,fill] (6) at (5*360/6 +30: 5cm) {$6$};
				\node[draw,circle,fill] (7) at ($(4)!.5!(6)$) {$7$};
                    \node[draw,circle,fill] (8) at (0,0) {$8$};
                    \node[draw,circle,fill] (9) at ($(2)!.33!(8)$) {$9$};
                    \node[draw,circle,fill] (10) at ($(2)!.66!(8)$) {$1$};

				\draw [line width=3pt,-] (1) -- (2);
                    \draw [line width=3pt,-] (1) -- (6);
                    \draw [line width=3pt,-] (1) -- (8);
				\draw [line width=3pt,-] (2) -- (3);
                    \draw [line width=3pt,-] (2) -- (9);
				\draw [line width=3pt,-] (3) -- (4);
                    \draw [line width=3pt,-] (3) -- (8);
				\draw [line width=3pt,-] (4) -- (5);
                    \draw [line width=3pt,-] (4) -- (7);
				\draw [line width=3pt,-] (5) -- (6);
                    \draw [line width=3pt,-] (6) -- (7);
                    \draw [line width=3pt,-] (8) -- (10);
                    \draw [line width=3pt,-] (9) -- (10);

				\end{tikzpicture}
			}
			\caption{Graph 16}
			\label[graph]{fig:graph16}
		\end{subfigure}
		\caption{Currently known topologically minimal non-metrizable graphs}
		\label{fig:zoo}
	\end{figure}
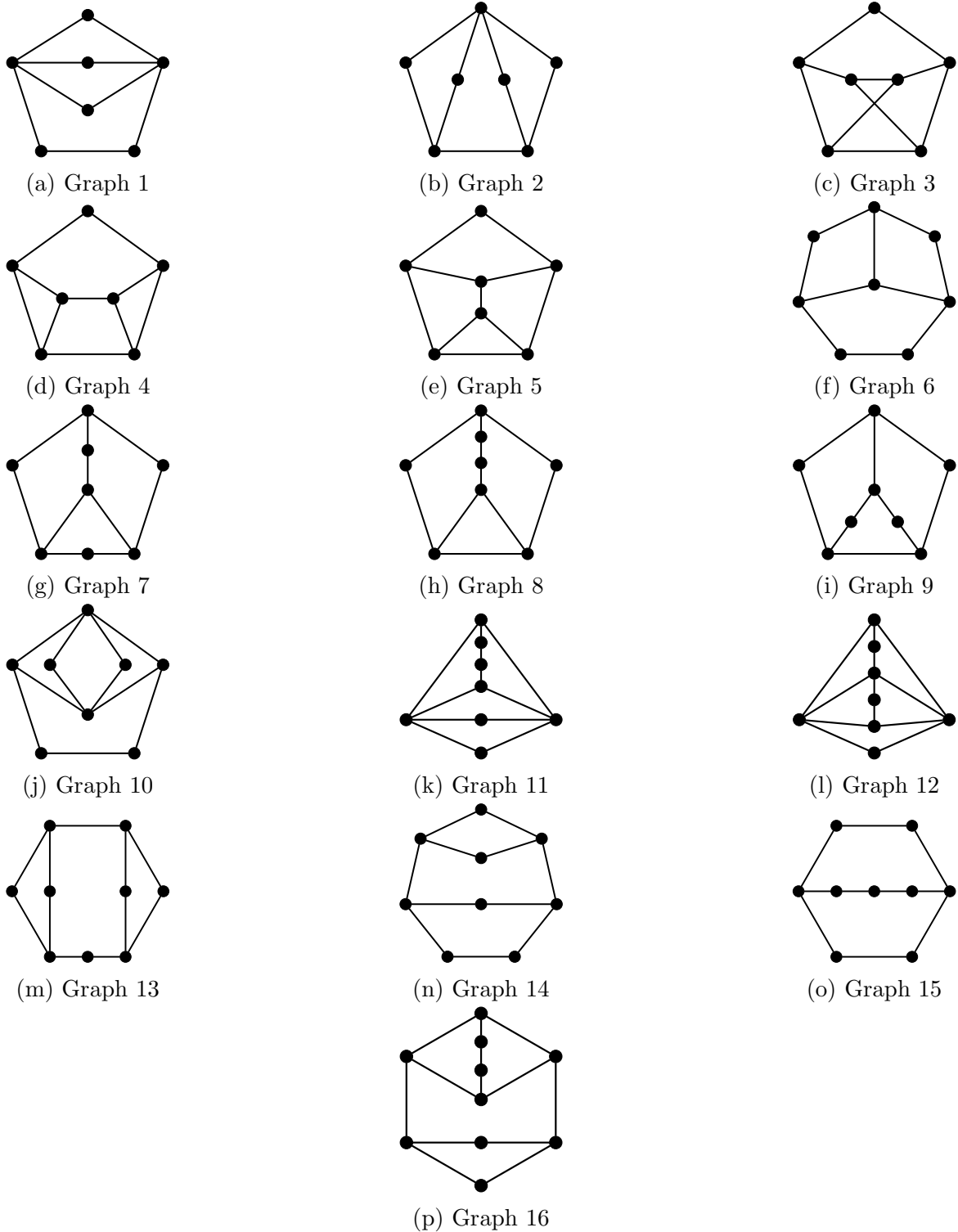
\newpage
\appendix \label{append:certificates}
\section{Certificates of non-metrizability}\label{append:certificates}
	For each graph $G$ in \Cref{fig:zoo} we give a path system in $G$ along with a system of inequalities a weight function inducing this path system must satisfy. In each case, these inequalities imply at least one edge in the graph must have a non-positive weight, showing the graph in not metrizable.\\
	\vspace{5mm}
	
	\noindent
	\begin{minipage}[c]{0.4\textwidth}
		\centering
		\begin{tikzpicture}[scale=0.35, every node/.style={scale=0.35}]

		\node[draw,circle,minimum size=.5cm,inner sep=1pt] (2) at (1*360/5 +90: 5cm) [scale = 2]{$1$};
		\node[draw,circle,minimum size=.5cm,inner sep=1pt] (3) at (2*360/5 +90: 5cm)[scale = 2] {$6$};
		\node[draw,circle,minimum size=.5cm,inner sep=1pt] (4) at (3*360/5 +90: 5cm) [scale = 2]{$7$};
		\node[draw,circle,minimum size=.5cm,inner sep=1pt] (5) at (4*360/5 +90: 5cm) [scale = 2]{$2$};
		\node[draw,circle,minimum size=.5cm,inner sep=1pt] (1) at ($(2)!0.5!(5)$) [scale = 2]{$4$};
		\node[draw,circle,minimum size=.5cm,inner sep=1pt] (6) at ($(2)!0.5!(5) + (0,-3)$)[scale = 2] {$5$};
		\node[draw,circle,minimum size=.5cm,inner sep=1pt] (7) at  ($(2)!0.5!(5) + (0,3)$)[scale = 2] {$3$};

		\draw [line width=2pt,-] (1) -- (2);
		\draw [line width=2pt,-] (2) -- (3);
		\draw [line width=2pt,-] (3) -- (4);
		\draw [line width=2pt,-] (4) -- (5);
		\draw [line width=2pt,-] (5) -- (1);
		\draw [line width=2pt,-] (2) -- (6);
		\draw [line width=2pt,-] (5) -- (6);
		\draw [line width=2pt,-] (2) -- (7);
		\draw [line width=2pt,-] (5) -- (7);
		
		\end{tikzpicture}
	\end{minipage}
	\begin{minipage}[c]{0.4\textwidth}
		\centering
		\small
		\begin{equation*}
		\begin{gathered}
		(1,3,2), \ (1,6,7),  \ (2,7,6), \ (3,1,4) \ (3,2,5),\ (3,1,6),\\
		(3,1,6,7), \ (4,1,5),\ (4,2,7,6),\ (4,2,7),  \  (5,1,6), \ (5,2,7)
		\end{gathered}
		\end{equation*}
	\end{minipage}
	\\
	\vspace{4mm}
	\begin{minipage}{0.45\textwidth}
		\small
		\begin{equation*}
		\begin{split}
		w_{2,3} + w_{2,5} & \leq w_{1,3} + w_{1,5}\\
		w_{1,4} + w_{1,5} & \leq w_{2,4} + w_{2,5}\\
		w_{2,4} + w_{2,7} + w_{6,7} & \leq w_{1,4} + w_{1,6}\\
		w_{1,3} + w_{1,6} + w_{6,7} & \leq w_{2,3} + w_{2,7}
		\end{split}
		\end{equation*}
		
	\end{minipage}
	$\implies $
	\begin{minipage}{0.2\textwidth}
		\small
		$$w_{6,7} \leq 0$$
	\end{minipage}
	\begin{center}
		\line(1,0){400}
	\end{center}
	\begin{minipage}[c]{0.4\textwidth}
		\centering
		\begin{tikzpicture}[scale=0.35, every node/.style={scale=0.35}]

		\node[draw,circle,minimum size=.5cm,inner sep=1pt] (1) at (0*360/5 +90: 5cm) [scale=2]{$1$};
		\node[draw,circle,minimum size=.5cm,inner sep=1pt] (2) at (1*360/5 +90: 5cm) [scale=2]{$2$};
		\node[draw,circle,minimum size=.5cm,inner sep=1pt] (3) at (2*360/5 +90: 5cm) [scale=2]{$3$};
		\node[draw,circle,minimum size=.5cm,inner sep=1pt] (4) at (3*360/5 +90: 5cm) [scale=2]{$4$};
		\node[draw,circle,minimum size=.5cm,inner sep=1pt] (5) at (4*360/5 +90: 5cm) [scale=2]{$5$};
		
		\node[draw,circle,minimum size=.5cm,inner sep=1pt] (6) at ($(1)!0.5!(3)$)[scale=2] {$6$};
		\node[draw,circle,minimum size=.5cm,inner sep=1pt] (7) at  ($(1)!0.5!(4)$) [scale=2]{$7$};

		\draw [line width=2pt,-] (1) -- (2);
		\draw [line width=2pt,-] (2) -- (3);
		\draw [line width=2pt,-] (3) -- (4);
		\draw [line width=2pt,-] (4) -- (5);
		\draw [line width=2pt,-] (5) -- (1);
		\draw [line width=2pt,-] (1) -- (6);
		\draw [line width=2pt,-] (3) -- (6);
		\draw [line width=2pt,-] (1) -- (7);
		\draw [line width=2pt,-] (4) -- (7);
		
		\end{tikzpicture}
	\end{minipage}
	\begin{minipage}[c]{0.4\textwidth}
		\small
		\begin{equation*}
		\begin{gathered}
		 (1,6,3), \ (1,7,4),  \ (2,3,4), \ (2,3,4,5), \\ 
            (2,1,6), \ (2,1,7), \ (3,4,5), \ (3,4,7), \\
            (4,3,6), \ (5,4,3,6), \ (5,1,7), \ (6,3,4,7)
		\end{gathered}
		\end{equation*}
	\end{minipage}
	\\
	\vspace{4mm}
	\begin{minipage}{0.45\textwidth}
		\small
		\begin{equation*}
		\begin{split}
		w_{2,3} + w_{3,4} +w_{4,5}& \leq w_{1,2} + w_{1,5}\\
		w_{1,2} + w_{1,6} & \leq w_{2,3} + w_{3,6}\\
		w_{1,5} + w_{1,7} & \leq w_{4,5} + w_{4,7}\\
		w_{3,6} + w_{3,4} + w_{4,7} & \leq w_{1,6} + w_{1,7}
		\end{split}
		\end{equation*}
	\end{minipage}
	$\implies $
	\begin{minipage}{0.2\textwidth}
		\small $$w_{3,4} \leq 0$$
	\end{minipage}
	\begin{center}
		\line(1,0){400}
	\end{center}
	\begin{minipage}[c]{0.4\textwidth}
		\centering
		\begin{tikzpicture}[scale=0.35, every node/.style={scale=0.35}]

		\node[draw,circle,minimum size=.5cm,inner sep=1pt] (1) at (0*360/5 +90: 5cm) [scale=2]{$7$};
		\node[draw,circle,minimum size=.5cm,inner sep=1pt] (2) at (1*360/5 +90: 5cm) [scale=2]{$1$};
		\node[draw,circle,minimum size=.5cm,inner sep=1pt] (3) at (2*360/5 +90: 5cm)[scale=2] {$6$};
		\node[draw,circle,minimum size=.5cm,inner sep=1pt] (4) at (3*360/5 +90: 5cm) [scale=2]{$3$};
		\node[draw,circle,minimum size=.5cm,inner sep=1pt] (5) at (4*360/5 +90: 5cm) [scale=2]{$4$};
		
		\node[draw,circle,minimum size=.5cm,inner sep=1pt] (6) at ($(1)!0.5!(3)$) [scale=2]{$5$};
		\node[draw,circle,minimum size=.5cm,inner sep=1pt] (7) at  ($(1)!0.5!(4)$) [scale=2]{$2$};

		\draw [line width=2pt,-] (1) -- (2);
		\draw [line width=2pt,-] (2) -- (3);
		\draw [line width=2pt,-] (3) -- (4);
		\draw [line width=2pt,-] (4) -- (5);
		\draw [line width=2pt,-] (5) -- (1);
		\draw [line width=2pt,-] (6) -- (7);
		\draw [line width=2pt,-] (2) -- (6);
		\draw [line width=2pt,-] (5) -- (7);
		\draw [line width=2pt,-] (3) -- (7);
		\draw [line width=2pt,-] (4) -- (6);
		
		\end{tikzpicture}
	\end{minipage}
	\begin{minipage}[c]{0.4\textwidth}
		\centering
		\small
		\begin{equation*}
		\begin{gathered}
		(1,7,4,2), \ (1,6,3), \ (1,7,4), \\ 
		(2,6,3),   (2,4,7),  \ (3,4,7),\ (4,3,5), \\ 
                (4,3,6),\ (5,1,6), \ (5,3,4,7), \ (6,1,7)
		\end{gathered}
		\end{equation*}
	\end{minipage}
	\\
	\vspace{5mm}
	\begin{minipage}{0.45\textwidth}
		\small
		\begin{equation*}
		\begin{split}
		w_{2,6} + w_{3,6} & \leq w_{2,4} + w_{3,4}\\
		w_{1,5} + w_{1,6} & \leq w_{3,5} + w_{3,6}\\
		w_{1,7} + w_{4,7} + w_{2,4} & \leq w_{1,6} + w_{2,6}\\
		w_{3,5} + w_{3,4} + w_{4,7} & \leq w_{1,5} + w_{1,7}
		\end{split}
		\end{equation*}
	\end{minipage}
	$\implies $
	\begin{minipage}{0.2\textwidth}
		\small
		$$w_{4,7} \leq 0$$
	\end{minipage}
	\begin{center}
		\line(1,0){400}
	\end{center}
	\begin{minipage}[c]{0.4\textwidth}
		\centering
		\begin{tikzpicture}[scale=0.35, every node/.style={scale=0.35}]

		\node[draw,circle,minimum size=.5cm,inner sep=1pt] (1) at (0*360/5 +90: 5cm)[scale =2] {$1$};
		\node[draw,circle,minimum size=.5cm,inner sep=1pt] (2) at (1*360/5 +90: 5cm)[scale =2] {$2$};
		\node[draw,circle,minimum size=.5cm,inner sep=1pt] (3) at (2*360/5 +90: 5cm)[scale =2] {$3$};
		\node[draw,circle,minimum size=.5cm,inner sep=1pt] (4) at (3*360/5 +90: 5cm)[scale =2] {$4$};
		\node[draw,circle,minimum size=.5cm,inner sep=1pt] (5) at (4*360/5 +90: 5cm)[scale =2] {$5$};
		
		\node (m1) at ($(2)!.5! (3)$) {};
		\node (m2) at ($(4)!.5! (5)$) {};
		\node[draw,circle,minimum size=.5cm,inner sep=1pt] (6) at ($(m1)!.8!270:(2)$)[scale =2] {$6$};
		\node[draw,circle,minimum size=.5cm,inner sep=1pt] (7) at  ($(m2)!.8!90:(5)$)[scale =2] {$7$};

		\draw [line width=2pt,-] (1) -- (2);
		\draw [line width=2pt,-] (2) -- (3);
		\draw [line width=2pt,-] (3) -- (4);
		\draw [line width=2pt,-] (4) -- (5);
		\draw [line width=2pt,-] (5) -- (1);
		\draw [line width=2pt,-] (6) -- (7);
		\draw [line width=2pt,-] (2) -- (6);
		\draw [line width=2pt,-] (5) -- (7);
		\draw [line width=2pt,-] (3) -- (6);
		\draw [line width=2pt,-] (4) -- (7);

		\end{tikzpicture}
	\end{minipage}
	\begin{minipage}[c]{0.4\textwidth}
		\centering
		\small
		\begin{equation*}
		\begin{gathered}
		 (1,2,3), \ (1,2,3,4), \ (1,2,6), \ (1,5,7), \\
		(2,3,4), \ (2,1,5), \ (2,1,5,7),  \ (3,4,5),  \\
		(3,6,7),  \ (4,7,6), \ (5,1,2,6)
		\end{gathered}
		\end{equation*}
	\end{minipage}
	\\
	\vspace{5mm}
	\begin{minipage}{0.45\textwidth}
		\small
		\begin{equation*}
		\begin{split}
		w_{1,2} + w_{2,3} + w_{3,4} & \leq w_{1,5} +  w_{4,5}\\
		w_{1,2} + w_{1,5} + w_{5,7} & \leq w_{2,6} + w_{6,7}\\
		w_{1,5} + w_{1,2} + w_{2,6} & \leq w_{5,7} + w_{6,7}\\
		w_{3,6} + w_{6,7} & \leq w_{3,4} + w_{4,7}\\
		w_{4,7} + w_{6,7} & \leq w_{3,4} + w_{3,6}\\
		w_{3,4} + w_{4,5}  & \leq w_{1,5} + w_{1,2} + w_{2,3}
		\end{split}
		\end{equation*}
	\end{minipage}
	$\implies $
	\begin{minipage}{0.2\textwidth}
		\small
		$$w_{1,2} \leq 0$$
	\end{minipage}
	\begin{center}
		\line(1,0){400}
	\end{center}
	\begin{minipage}[c]{0.4\textwidth}
		\centering
		\begin{tikzpicture}[scale=0.35, every node/.style={scale=0.35}]

		\node[draw,circle,minimum size=.5cm,inner sep=1pt] (1) at (0*360/5 +90: 5cm)[scale=2] {$1$};
		\node[draw,circle,minimum size=.5cm,inner sep=1pt] (2) at (1*360/5 +90: 5cm)[scale=2] {$2$};
		\node[draw,circle,minimum size=.5cm,inner sep=1pt] (3) at (2*360/5 +90: 5cm)[scale=2] {$3$};
		\node[draw,circle,minimum size=.5cm,inner sep=1pt] (4) at (3*360/5 +90: 5cm)[scale=2] {$4$};
		\node[draw,circle,minimum size=.5cm,inner sep=1pt] (5) at (4*360/5 +90: 5cm)[scale=2] {$5$};
		
		\node[draw,circle,minimum size=.5cm,inner sep=1pt] (6) at ($(2)!.5!(5)+(0,-1)$)[scale=2] {$6$};
		\node[draw,circle,minimum size=.5cm,inner sep=1pt] (7) at ($(2)!.5!(5) + (0,-3)$)[scale=2] {$7$};

		\draw [line width=2pt,-] (1) -- (2);
		\draw [line width=2pt,-] (2) -- (3);
		\draw [line width=2pt,-] (3) -- (4);
		\draw [line width=2pt,-] (4) -- (5);
		\draw [line width=2pt,-] (5) -- (1);
		\draw [line width=2pt,-] (6) -- (7);
		\draw [line width=2pt,-] (2) -- (6);
		\draw [line width=2pt,-] (5) -- (6);
		\draw [line width=2pt,-] (3) -- (7);
		\draw [line width=2pt,-] (4) -- (7);

		\end{tikzpicture}
	\end{minipage}
	\begin{minipage}[c]{0.4\textwidth}
		\centering
		\small
		\begin{equation*}
		\begin{gathered}
		(1,5,4,3), \ (1,5,4),  \ (1,5,6), \ (1,5,6,7), \\
		(2,1,5,4), \ (2,1,5), \ (2,3,7), \ (3,4,5)\\
		(3,2,6),\ (4,7,6), \ (5,6,7)
		\end{gathered}
		\end{equation*}
	\end{minipage}
	\\
	\vspace{5mm}
	\begin{minipage}{0.45\textwidth}
		\small
		\begin{equation*}
		\begin{split}
		w_{1,5} + w_{4,5} + w_{3,4} & \leq w_{1,2} + w_{2,3}\\
		w_{1,2} + w_{1,5} + w_{4,5} & \leq w_{2,3} + w_{3,4}\\
		w_{2,3} + w_{3,7}  & \leq w_{2,6} + w_{6,7}\\
		w_{2,3} + w_{2,6}  & \leq w_{3,7} + w_{6,7}\\
		w_{5,6} + w_{6,7} & \leq w_{4,5} + w_{4,7}\\
		w_{4,7} + w_{6,7}  & \leq w_{4,5} + w_{5,6}
		\end{split}
		\end{equation*}
	\end{minipage}
	$\implies $
	\begin{minipage}{0.2\textwidth}
		\small
		$$w_{1,5} \leq 0$$
	\end{minipage}
	\begin{center}
		\line(1,0){400}
	\end{center}
	\begin{minipage}[c]{0.4\textwidth}
		\centering
		\begin{tikzpicture}[scale=0.35, every node/.style={scale=0.35}]

		\node[draw,circle,minimum size=.5cm,inner sep=1pt] (1) at (0*360/5 +90: 5cm) [scale =2]{$1$};
		\node[draw,circle,minimum size=.5cm,inner sep=1pt] (2) at (-1*360/5 +90: 5cm) [scale =2]{$2$};
		\node[draw,circle,minimum size=.5cm,inner sep=1pt] (3) at (-2*360/5 +90: 5cm) [scale =2]{$3$};
		\node[draw,circle,minimum size=.5cm,inner sep=1pt] (4) at (-3*360/5 +90: 5cm)[scale =2] {$4$};
		\node[draw,circle,minimum size=.5cm,inner sep=1pt] (5) at (-4*360/5 +90: 5cm) [scale =2]{$5$};
		\node[draw,circle,minimum size=.5cm,inner sep=1pt] (6) at (0,0) [scale =2]{$6$};
		\node[draw,circle,minimum size=.5cm,inner sep=1pt] (7) at ($(4)!.5!(6)$) [scale =2]{$7$};
		\node[draw,circle,minimum size=.5cm,inner sep=1pt] (8) at ($(3)!.5!(6)$) [scale =2]{$8$};

		\draw [line width=2pt,-] (1) -- (2);
		\draw [line width=2pt,-] (1) -- (5);
		\draw [line width=2pt,-] (1) -- (6);
		\draw [line width=2pt,-] (2) -- (3);
		\draw [line width=2pt,-] (3) -- (4);
		\draw [line width=2pt,-] (3) -- (8);
		\draw [line width=2pt,-] (4) -- (5);
		\draw [line width=2pt,-] (4) -- (7);
		\draw [line width=2pt,-] (6) -- (7);
		\draw [line width=2pt,-] (6) -- (8);

		\end{tikzpicture}
	\end{minipage}
	\begin{minipage}[c]{0.4\textwidth}
		\centering
		\small
		\begin{equation*}
		\begin{gathered}
		(1,5,4,3), \ (1,5,4),  \ (1,6,7), \ (1,6,8), \\
		(2,3,4), \ (2,3,4,5), \ (2,1,6), \ (2,1,6,7), \ (2,1,6,8), \ (3,4,5), \\ 
		(3,8,6), \ (3,4,7), \ (4,5,1,6),  \ (4,3,8), \\
		(5,1,6), \ (5,4,7), \ (5,1,6,8),  \ (7,4,3,8)
		\end{gathered}
		\end{equation*}
	\end{minipage}
	\\
	\vspace{5mm}
	\begin{minipage}{0.45\textwidth}
		\small
		\begin{equation*}
		\begin{split}
		w_{4,7} + w_{3,4} + w_{3,8} & \leq  w_{6,7} +w_{6,8}\\
		w_{2,3} + w_{3,4} + w_{4,5} & \leq  w_{1,2} +w_{1,5}\\
		w_{1,5} + w_{1,6} + w_{6,8} & \leq  w_{4,5} +w_{3,4}+w_{3,8}\\
		w_{1,2} + w_{1,6} + w_{6,7} & \leq  w_{2,3} +w_{3,4}+w_{4,7}
		\end{split}
		\end{equation*}
	\end{minipage}
	$\implies $
	\begin{minipage}{0.2\textwidth}
		\small
		$$w_{1,6} \leq 0$$
	\end{minipage}
	\begin{center}
		\line(1,0){400}
	\end{center}
	\begin{minipage}[c]{0.4\textwidth}
		\centering
		\begin{tikzpicture}[scale=0.34, every node/.style={scale=0.34}]

		\node[draw,circle,minimum size=.5cm,inner sep=1pt] (1) at (0*360/7 +90: 5cm)[scale =2] {$1$};
		\node[draw,circle,minimum size=.5cm,inner sep=1pt] (2) at (1*360/7 +90: 5cm)[scale =2] {$2$};
		\node[draw,circle,minimum size=.5cm,inner sep=1pt] (3) at (2*360/7 +90: 5cm) [scale =2]{$3$};
		\node[draw,circle,minimum size=.5cm,inner sep=1pt] (4) at (3*360/7 +90: 5cm) [scale =2]{$4$};
		\node[draw,circle,minimum size=.5cm,inner sep=1pt] (5) at (4*360/7 +90: 5cm) [scale =2]{$5$};
		\node[draw,circle,minimum size=.5cm,inner sep=1pt] (6) at (5*360/7 +90: 5cm) [scale =2]{$6$};
		\node[draw,circle,minimum size=.5cm,inner sep=1pt] (7) at (6*360/7 +90: 5cm) [scale =2]{$7$};
		
		\node[draw,circle,minimum size=.5cm,inner sep=1pt] (8) at (0,0) [scale =2]{$8$};

		\draw [line width=2pt,-] (1) -- (2);
		\draw [line width=2pt,-] (2) -- (3);
		\draw [line width=2pt,-] (3) -- (4);
		\draw [line width=2pt,-] (4) -- (5);
		\draw [line width=2pt,-] (5) -- (6);
		\draw [line width=2pt,-] (6) -- (7);
		\draw [line width=2pt,-] (7) -- (1);
		\draw [line width=2pt,-] (1) -- (8);
		\draw [line width=2pt,-] (3) -- (8);
		\draw [line width=2pt,-] (6) -- (8);

		\end{tikzpicture}
	\end{minipage}
	\begin{minipage}[c]{0.4\textwidth}
		\centering
		\small
		\begin{equation*}
		\begin{gathered}
		(1,2,3), \ (1,7,6,5,4), \ (1,7,6,5), \ (1,7,6),\\ 
		(2,3,4), \ (2,3,4,5), \ (2,3,4,5,6), \ (2,1,7), \ (2,1,8),  \\
		(3,4,5), \ (3,4,5,6), \ (3,2,1,7),  \ (4,5,6), \ (4,5,6,7), \\
		(4,3,8), \  (5,6,7), \ (5,4,3,8),  \ (7,6,8)
		\end{gathered}
		\end{equation*}
	\end{minipage}
	\\
	\vspace{5mm}
	\begin{minipage}{0.45\textwidth}
		\small
		\begin{equation*}
		\begin{split}
		w_{4,5} + w_{3,4}+w_{3,8} & \leq w_{5,6} + w_{6,8}\\
		w_{1,2} + w_{1,8} & \leq w_{2,3} + w_{3,8}\\
		w_{6,7} + w_{6,8} & \leq w_{1,7} + w_{1,8}\\
		w_{1,7}+w_{6,7} + w_{5,6} + w_{4,5} & \leq w_{1,2} + w_{2,3} + w_{3,4}\\
		w_{2,3}+w_{3,4} + w_{4,5} + w_{5,6} & \leq w_{1,2} + w_{1,7} + w_{6,7}\\
		w_{2,3} + w_{1,2} + w_{1,7} & \leq w_{3,4} + w_{4,5} + w_{5,6} + w_{6,7}
		\end{split}
		\end{equation*}
	\end{minipage}
	$\hspace{10mm}\implies $
	\begin{minipage}{0.2\textwidth}
		\small
		$$w_{4,5} \leq 0$$
	\end{minipage}
	\begin{center}
		\line(1,0){400}
	\end{center}
	\begin{minipage}[c]{0.4\textwidth}
		\centering
		\begin{tikzpicture}[scale=0.34, every node/.style={scale=0.34}]
		
		\node[draw,circle,minimum size=.5cm,inner sep=1pt] (2) at (0*360/5 +90: 5cm) [scale =2]{$2$};
		\node[draw,circle,minimum size=.5cm,inner sep=1pt] (1) at (1*360/5 +90: 5cm) [scale =2]{$1$};
		\node[draw,circle,minimum size=.5cm,inner sep=1pt] (6) at (2*360/5 +90: 5cm) [scale =2]{$6$};
		\node[draw,circle,minimum size=.5cm,inner sep=1pt] (4) at (3*360/5 +90: 5cm)[scale =2] {$4$};
		\node[draw,circle,minimum size=.5cm,inner sep=1pt] (3) at (4*360/5 +90: 5cm)[scale =2] {$3$};
		\node[draw,circle,minimum size=.5cm,inner sep=1pt] (8) at (0,0)[scale =2] {$8$};
		\node[draw,circle,minimum size=.5cm,inner sep=1pt] (5) at ($(4)!.5!(6)$)[scale =2] {$5$};
		\node[draw,circle,minimum size=.5cm,inner sep=1pt] (7) at ($(2)!.5!(8)$)[scale =2] {$7$};

		\draw [line width=2pt,-] (1) -- (2);
		\draw [line width=2pt,-] (1) -- (6);
		\draw [line width=2pt,-] (2) -- (3);
		\draw [line width=2pt,-] (2) -- (7);
		\draw [line width=2pt,-] (3) -- (4);
		\draw [line width=2pt,-] (4) -- (5);
		\draw [line width=2pt,-] (4) -- (8);
		\draw [line width=2pt,-] (5) -- (6);
		\draw [line width=2pt,-] (6) -- (8);
		\draw [line width=2pt,-] (7) -- (8);

		\end{tikzpicture}
	\end{minipage}
	\begin{minipage}[c]{0.4\textwidth}
		\centering
		\small
		\begin{equation*}
		\begin{gathered}
		 (1,2,3), \ (1,2,3,4), \ (1,6,5), \ (1,6,8,7), \ (1,6,8),\\
		 (2,3,4), \ (2,1,6,5), \ (2,1,6),\ (2,7,8),  \\ 
		(3,4,5), \ (3,4,5,6), \ (3,2,7), \ (3,2,7,8),  \ (4,5,6), \ (4,8,7), \\ 
		(5,4,8,7), \ (5,4,8)
		\end{gathered}
		\end{equation*}
	\end{minipage}
	\\
	\vspace{5mm}
	\begin{minipage}{0.45\textwidth}
		\small
		\begin{equation*}
		\begin{split}
		w_{1,6} + w_{6,8} + w_{7,8}  & \leq w_{1,2} + w_{2,7}\\
		w_{2,3} + w_{2,7} + w_{7,8}  & \leq w_{3,4} + w_{4,8}\\
		w_{4,5} + w_{4,8}  & \leq w_{5,6} + w_{6,8}\\
		w_{1,2} + w_{2,3} + w_{3,4}  & \leq w_{1,6} + w_{5,6} + w_{4,5}\\
		w_{1,2} + w_{1,6} + w_{5,6}  & \leq w_{2,3} + w_{3,4} + w_{4,5}\\
		w_{3,4} + w_{4,5} + w_{5,6}  & \leq w_{2,3} + w_{1,2} + w_{1,6}
		\end{split}
		\end{equation*}
	\end{minipage}
	$\implies $
	\begin{minipage}{0.2\textwidth}
		\small
		$$w_{7,8} \leq 0$$
	\end{minipage}
	\begin{center}
		\line(1,0){400}
	\end{center}
	\begin{minipage}[c]{0.4\textwidth}
		\centering
		\begin{tikzpicture}[scale=0.34, every node/.style={scale=0.34}]

		\node[draw,circle,minimum size=.5cm,inner sep=1pt] (5) at (0*360/5 +90: 5cm) [scale =2] {$5$};
		\node[draw,circle,minimum size=.5cm,inner sep=1pt] (6) at (1*360/5 +90: 5cm) [scale =2] {$6$};
		\node[draw,circle,minimum size=.5cm,inner sep=1pt] (1) at (2*360/5 +90: 5cm) [scale =2] {$1$};
		\node[draw,circle,minimum size=.5cm,inner sep=1pt] (7) at (3*360/5 +90: 5cm)  [scale =2]{$7$};
		\node[draw,circle,minimum size=.5cm,inner sep=1pt] (8) at (4*360/5 +90: 5cm)  [scale =2]{$8$};
		\node[draw,circle,minimum size=.5cm,inner sep=1pt] (2) at (0,-1.5)  [scale =2]{$2$};
		\node[draw,circle,minimum size=.5cm,inner sep=1pt] (3) at ($(2)!.33!(5)$)  [scale =2]{$3$};
		\node[draw,circle,minimum size=.5cm,inner sep=1pt] (4) at ($(2)!.66!(5)$) [scale =2] {$4$};

		\draw [line width=2pt,-] (1) -- (2);
		\draw [line width=2pt,-] (1) -- (6);
		\draw [line width=2pt,-] (1) -- (7);
		\draw [line width=2pt,-] (2) -- (3);
		\draw [line width=2pt,-] (2) -- (7);
		\draw [line width=2pt,-] (3) -- (4);
		\draw [line width=2pt,-] (4) -- (5);
		\draw [line width=2pt,-] (5) -- (6);
		\draw [line width=2pt,-] (5) -- (8);
		\draw [line width=2pt,-] (7) -- (8);

		\end{tikzpicture}
	\end{minipage}
	\begin{minipage}[c]{0.4\textwidth}
		\centering
		\small
		\begin{equation*}
		\begin{gathered}
		 (1,6,5,4,3), \ (1,6,5,4), \ (1,6,5), \ (1,7,8), \\ 
		(2,3,4), \ (2,3,4,5), \ (2,1,6), \ (2,3,4,5,8), \\
		(3,4,5), \ (3,4,5,6), \ (3,2,7), \ (3,4,5,8), \ (4,5,6), \ (4,3,2,7), \\
		(4,5,8), \ (5,8,7),  \ (6,5,8,7), \ (6,5,8)
		\end{gathered}
		\end{equation*}
	\end{minipage}
	\\
	\vspace{4mm}
	\begin{minipage}{0.45\textwidth}
		\small
		\begin{equation*}
		\begin{split}
		w_{1,6} + w_{5,6} + w_{4,5} + w_{3,4}  & \leq w_{1,2} + w_{2,3}\\
		w_{2,3} + w_{3,4} + w_{4,5} + w_{5,8} & \leq w_{2,7} +w_{7,8}\\
		w_{5,6} + w_{5,8} + w_{7,8} & \leq w_{1,6} +w_{1,7}\\
		w_{3,4}+ w_{2,3} + w_{2,7}  & \leq w_{4,5} +w_{5,8}+ w_{7,8} \\
		w_{1,7} + w_{7,8} &\leq  w_{1,6} + w_{5,6} + w_{5,8}\\
		w_{1,2} + w_{1,6} &\leq  w_{2,3} + w_{3,4}+w_{4,5} +w_{5,6}
		\end{split}
		\end{equation*}
	\end{minipage}
	$\hspace{10mm}\implies $
	\begin{minipage}{0.2\textwidth}
		\small
		$$w_{3,4} \leq 0$$
	\end{minipage}
	\begin{center}
		\line(1,0){400}
	\end{center}
        \begin{minipage}[c]{0.4\textwidth}
		\centering
		\begin{tikzpicture}[scale=0.34, every node/.style={scale=0.34}]

		\node[draw,circle,minimum size=.5cm,inner sep=1pt] (1) at (0*360/5 +90: 5cm) [scale =2] {$1$};
		\node[draw,circle,minimum size=.5cm,inner sep=1pt] (2) at (1*360/5 +90: 5cm) [scale =2] {$2$};
		\node[draw,circle,minimum size=.5cm,inner sep=1pt] (3) at (2*360/5 +90: 5cm) [scale =2] {$3$};
		\node[draw,circle,minimum size=.5cm,inner sep=1pt] (4) at (3*360/5 +90: 5cm)  [scale =2]{$4$};
		\node[draw,circle,minimum size=.5cm,inner sep=1pt] (5) at (4*360/5 +90: 5cm)  [scale =2]{$5$};
		\node[draw,circle,minimum size=.5cm,inner sep=1pt] (6) at (0,-1.6) [scale =2]{$6$};
		\node[draw,circle,minimum size=.5cm,inner sep=1pt] (7) at ($(2)!.25!(5)$) [scale =2]{$7$};
		\node[draw,circle,minimum size=.5cm,inner sep=1pt] (8) at ($(2)!.75!(5)$) [scale =2]{$8$};

				\draw [line width=2pt,-] (1) -- (2);
                    \draw [line width=2pt,-] (1) -- (5);
                    \draw [line width=2pt,-] (1) -- (7);
                    \draw [line width=2pt,-] (1) -- (8);
                    \draw [line width=2pt,-] (2) -- (3);
                    \draw [line width=2pt,-] (2) -- (6);
                    \draw [line width=2pt,-] (3) -- (4);
                    \draw [line width=2pt,-] (4) -- (5);
                    \draw [line width=2pt,-] (5) -- (6);
                    \draw [line width=2pt,-] (6) -- (7);
                    \draw [line width=2pt,-] (6) -- (8);

		\end{tikzpicture}
	\end{minipage}
	\begin{minipage}[c]{0.4\textwidth}
		\centering
		\small
		\begin{equation*}
		\begin{gathered}
		 (1, 5, 4, 3), \ (1, 5, 4), \ (1, 5, 6), \ (2, 3, 4), \ (2, 6, 5),  (2, 1, 7), \\ (2, 1, 8), \ (3, 4, 5), \ (3, 2, 6), \ (3, 4, 5, 1, 7), \ (3, 4, 5, 1, 8), \\ (4, 3, 2, 6), \ (4, 5, 1, 7), \ (4, 5, 1, 8), \ (5, 1, 7), \ (5, 1, 8),  \ (7, 6, 8)
		\end{gathered}
		\end{equation*}
	\end{minipage}
	\\
	\vspace{4mm}
	\begin{minipage}{0.45\textwidth}
		\small
		\begin{equation*}
		\begin{split}
		w_{1,5} + w_{1,8} + w_{3,4} + w_{4,5}  & \leq w_{2,3} + 
            w_{2,6}+w_{6,8}\\
		w_{3,4}+w_{2,3} + w_{2,6}  & \leq w_{4,5} +w_{5,6}\\
		w_{2,6} + w_{5,6} & \leq w_{1,2} +w_{1,5}\\
		w_{1,2}+ w_{1,7}  & \leq w_{2,6} +w_{6,7} \\
		w_{6,7} + w_{6,8} &\leq  w_{1,7} + w_{1,8}\\
		\end{split}
		\end{equation*}
	\end{minipage}
	$\implies $
	\begin{minipage}{0.2\textwidth}
		\small
		$$w_{3,4} \leq 0$$
	\end{minipage}
	\begin{center}
		\line(1,0){400}
	\end{center}
        \begin{minipage}[c]{0.4\textwidth}
		\centering
		\begin{tikzpicture}[scale=0.38, every node/.style={scale=0.38}]

            \node[draw,circle,minimum size=.5cm,inner sep=1pt]  (1) at (-4.5,0) [scale=2]{$1$};
				\node[draw,circle,minimum size=.5cm,inner sep=1pt] (2) at (4.5,0)  [scale =2]  {$2$};
				\node[draw,circle,minimum size=.5cm,inner sep=1pt] (4) at ($(1)!.5!(2)$)  [scale =2] {$4$};
                    \node[draw,circle,minimum size=.5cm,inner sep=1pt] (3) at ($(1)!.5!(2)+ (0,-2)$)  [scale =2] {$3$};
                    \node[draw,circle,minimum size=.5cm,inner sep=1pt] (5) at ($(1)!.5!(2)+ (0,2)$)  [scale =2] {$5$};
                    \node[draw,circle,minimum size=.5cm,inner sep=1pt] (6) at ($(1)!.5!(2)+ (0,7)$)  [scale =2] {$6$};
                    \node[draw,circle,minimum size=.5cm,inner sep=1pt] (7) at ($(5)!.33!(6)$)  [scale =2] {$7$};
                    \node[draw,circle,minimum size=.5cm,inner sep=1pt] (8) at ($(5)!.66!(6)$)  [scale =2] {$8$};

				\draw [line width=2pt,-] (1) -- (3);
                    \draw [line width=2pt,-] (1) -- (4);
                    \draw [line width=2pt,-] (1) -- (5);
                    \draw [line width=2pt,-] (1) -- (6);
                    \draw [line width=2pt,-] (2) -- (3);
                    \draw [line width=2pt,-] (2) -- (4);
                    \draw [line width=2pt,-] (2) -- (5);
                    \draw [line width=2pt,-] (2) -- (6);
                    \draw [line width=2pt,-] (5) -- (7);
                    \draw [line width=2pt,-] (7) -- (8);
                    \draw [line width=2pt,-] (6) -- (8);

		\end{tikzpicture}
	\end{minipage}
	\begin{minipage}[c]{0.4\textwidth}
		\centering
		\small
		\begin{equation*}
		\begin{gathered}
		(1, 6, 2), \ (1, 6, 8, 7), \ (1, 6, 8), \ (2, 5, 7), \ (2, 5, 7, 8), \\
  (3, 1, 4), \ (3, 2, 5), \ (3, 2, 6), \ (3, 2, 5, 7), \ (3, 2, 5, 7, 8), \\
  (4, 2, 5), \ (4, 2, 6), \ (4, 2, 5, 7), \
  (4, 2, 5, 7, 8), \\ (5, 1, 6), \ (5, 7, 8), \ (6, 8, 7)
		\end{gathered}
		\end{equation*}
	\end{minipage}
	\\
	\vspace{5mm}
	\begin{minipage}{0.45\textwidth}
		\small
		\begin{equation*}
		\begin{split}
		w_{1,6} + w_{6,8} + w_{7,8}& \leq w_{1,5} + w_{5,7}\\
            w_{2,3} + w_{2,5} + w_{5,7} + w_{7,8}& \leq w_{1,3} + w_{1,6} + w_{6,8}\\
            w_{1,3} + w_{1,4}& \leq w_{2,3} + w_{2,4}\\
            w_{2,4} + w_{2,6}& \leq w_{1,4} + w_{1,6}\\
            w_{1,5} + w_{1,6}& \leq w_{2,5} + w_{2,6}
		\end{split}
		\end{equation*}
	\end{minipage}
	\hspace{10mm}$\implies $
	\begin{minipage}{0.2\textwidth}
		\small
		$$w_{7,8} \leq 0$$
	\end{minipage}
	\begin{center}
		\line(1,0){400}
	\end{center}
        \begin{minipage}[c]{0.4\textwidth}
		\centering
		\begin{tikzpicture}[scale=0.38, every node/.style={scale=0.38}]
            
            \node[draw,circle,minimum size=.5cm,inner sep=1pt] (1) at (-4.5,0)[scale =2] {$1$};
            \node[draw,circle,minimum size=.5cm,inner sep=1pt] (2) at (4.5,0)[scale =2] {$2$};
            \node[draw,circle,minimum size=.5cm,inner sep=1pt] (3) at ($(1)!.5!(2)+ (0,6)$)[scale =2] {$3$};
            \node[draw,circle,minimum size=.5cm,inner sep=1pt](6) at ($(1)!.5!(2)+ (0,-2)$)[scale =2] {$6$};
            \node[draw,circle,minimum size=.5cm,inner sep=1pt] (4) at ($(3)!.4!(6)$)[scale =2] {$4$};
            \node[draw,circle,minimum size=.5cm,inner sep=1pt](5) at ($(3)!.8!(6)$)[scale =2] {$5$};
            \node[draw,circle,minimum size=.5cm,inner sep=1pt] (7) at ($(3)!.2!(6)$)[scale =2] {$7$};
            \node[draw,circle,minimum size=.5cm,inner sep=1pt] (8) at ($(3)!.6!(6)$)[scale =2] {$8$};

            \draw [line width=2pt,-] (1) -- (3);
            \draw [line width=2pt,-] (1) -- (4);
            \draw [line width=2pt,-] (1) -- (5);
            \draw [line width=2pt,-] (1) -- (6);
            \draw [line width=2pt,-] (2) -- (3);
            \draw [line width=2pt,-] (2) -- (4);
            \draw [line width=2pt,-] (2) -- (5);
            \draw [line width=2pt,-] (2) -- (6);
            \draw [line width=2pt,-] (3) -- (7);
            \draw [line width=2pt,-] (4) -- (7);
            \draw [line width=2pt,-] (4) -- (8);
            \draw [line width=2pt,-] (5) -- (8);

		\end{tikzpicture}
	\end{minipage}
	\begin{minipage}[c]{0.4\textwidth}
		\centering
		\small
		\begin{equation*}
		\begin{gathered}
		(1, 5, 2), \ (1, 3, 7), \ (1, 5, 8), \ (2, 4, 7), \ (2, 5, 8), \\ (3, 2, 4), \ (3, 1, 5), \ (3, 1, 6), \ (3, 1, 5, 8), \\ (4, 8, 5), \ (4, 1, 6), \ (5, 2, 6), \ (5, 1, 3, 7), \\
  (6, 1, 3, 7), \ (6, 2, 5, 8), \ (7, 3, 1, 5, 8)
		\end{gathered}
		\end{equation*}
	\end{minipage}
	\\
	\vspace{5mm}
	\begin{minipage}{0.45\textwidth}
		\small
		\begin{equation*}
		\begin{split}
		w_{2,4} + w_{4,7}& \leq w_{2,3} + w_{3,7}\\
            w_{2,3} + w_{2,4}& \leq w_{1,3} + w_{1,4}\\
            w_{4,8} + w_{5,8}& \leq w_{2,4} + w_{2,5}\\
            w_{1,4} + w_{1,6}& \leq w_{2,4} + w_{2,6}\\
            w_{2,5} + w_{2,6}& \leq w_{1,5} + w_{1,6}\\
            w_{1,3} + w_{1,5} + w_{3,7} + w_{5,8}& \leq w_{4,7} + w_{4,8}
		\end{split}
		\end{equation*}
	\end{minipage}
	\hspace{5mm}$\implies $
	\begin{minipage}{0.2\textwidth}
		\small
		$$w_{5,8} \leq 0$$
	\end{minipage}
	\begin{center}
		\line(1,0){400}
	\end{center}
	\begin{minipage}[c]{0.4\textwidth}
		\centering
		\begin{tikzpicture}[scale=0.38, every node/.style={scale=0.38}]

		\node[draw,circle,minimum size=.5cm,inner sep=1pt] (1) at (-4,3.5) [scale=2]{$1$};
		\node[draw,circle,minimum size=.5cm,inner sep=1pt] (2) at (-6,0) [scale=2]{$2$};
		\node[draw,circle,minimum size=.5cm,inner sep=1pt] (3) at (-4,-3.5) [scale=2]{$3$};
		\node[draw,circle,minimum size=.5cm,inner sep=1pt] (5) at (4,-3.5) [scale=2]{$5$};
		\node[draw,circle,minimum size=.5cm,inner sep=1pt] (6) at (6,0) [scale=2]{$6$};
		\node[draw,circle,minimum size=.5cm,inner sep=1pt] (7) at (4,3.5) [scale=2]{$7$};
		\node[draw,circle,minimum size=.5cm,inner sep=1pt] (4) at ($(3)!.5!(5)$) [scale=2]{$4$};
		\node[draw,circle,minimum size=.5cm,inner sep=1pt] (8) at (-2,0) [scale=2]{$8$};
		\node[draw,circle,minimum size=.5cm,inner sep=1pt] (9) at (2,0) [scale=2]{$9$};

		\draw [line width=2pt,-] (1) -- (2);
		\draw [line width=2pt,-] (1) -- (7);
		\draw [line width=2pt,-] (1) -- (8);
		\draw [line width=2pt,-] (2) -- (3);
		\draw [line width=2pt,-] (3) -- (4);
		\draw [line width=2pt,-] (3) -- (8);
		\draw [line width=2pt,-] (4) -- (5);
		\draw [line width=2pt,-] (5) -- (6);
		\draw [line width=2pt,-] (5) -- (9);
		\draw [line width=2pt,-] (6) -- (7);
		\draw [line width=2pt,-] (7) -- (9);

		\end{tikzpicture}
	\end{minipage}
	\begin{minipage}[c]{0.4\textwidth}
		\centering
		\small
		\begin{equation*}
		\begin{gathered}
		 (1,8,3), \ (1,8,3,4) , \ (1,7,6,5), \ (1,7,6), \ (1,7,9), \\
		 (2,3,4), \ (2,3,4,5), \ (2,3,4,5,6), \ (2,1,7), \ (2,1,8), \  (2,1,7,9), \\
		 (3,4,5), \ (3,4,5,6), \ (3,8,1,7), \ (3,4,5,9), \\
		(4,5,6), \ (4,3,8,1,7), \ (4,3,8), \ (4,5,9), \  (5,6,7), \ (5,4,3,8), \\ 
		 (6,7,1,8), \ (6,7,9), \ (7,1,8), \ (8,3,4,5,9)
		\end{gathered}
		\end{equation*}
	\end{minipage}
	\\
	\vspace{5mm}
	\begin{minipage}{0.45\textwidth}
		\small
		\begin{equation*}
		\begin{split}
		w_{2,3} + w_{3,4} + w_{4,5} + w_{5,6} & \leq w_{1,2} + w_{1,7} + w_{6,7}\\
		w_{3,4} + w_{3,8} + w_{1,8} + w_{1,7} & \leq w_{4,5} + w_{5,6} + w_{6,7}\\
		w_{3,8} + w_{3,4} + w_{4,5} + w_{5,9} & \leq w_{1,8} + w_{1,7} + w_{7,9}\\
		w_{1,7} + w_{6,7} + w_{5,6} & \leq w_{1,8} + w_{3,8} + w_{3,4} + w_{4,5}\\
		w_{1,2} + w_{1,8} & \leq w_{2,3} + w_{3,8} \\
		w_{6,7} + w_{7,9} & \leq w_{5,6} + w_{5,9}
		\end{split}
		\end{equation*}
	\end{minipage}
	$\hspace{15mm}\implies $
	\begin{minipage}{0.2\textwidth}
		\small
		$$w_{3,4} \leq 0$$
	\end{minipage}
	\begin{center}
		\line(1,0){400}
	\end{center}
\begin{minipage}[c]{0.4\textwidth}
		\centering
		\begin{tikzpicture}[scale=0.38, every node/.style={scale=0.38}]

            \node[draw,circle,minimum size=.5cm,inner sep=1pt] (1) at (0*360/7 +90: 5cm)  [scale=2]{$1$};
            \node[draw,circle,minimum size=.5cm,inner sep=1pt](2) at (1*360/7 +90: 5cm)  [scale=2]{$2$};
            \node[draw,circle,minimum size=.5cm,inner sep=1pt](3) at (2*360/7 +90: 5cm)  [scale=2]{$3$};
            \node[draw,circle,minimum size=.5cm,inner sep=1pt] (4) at (3*360/7 +90: 5cm)  [scale=2]{$4$};
            \node[draw,circle,minimum size=.5cm,inner sep=1pt] (5) at (4*360/7 +90: 5cm)  [scale=2]{$5$};
            \node[draw,circle,minimum size=.5cm,inner sep=1pt] (6) at (5*360/7 +90: 5cm)  [scale=2]{$6$};
            \node[draw,circle,minimum size=.5cm,inner sep=1pt](7) at (6*360/7 +90: 5cm)  [scale=2]{$7$};
            \node[draw,circle,minimum size=.5cm,inner sep=1pt](8) at ($(3)!.5!(6)$)  [scale=2]{$8$};
            \node[draw,circle,minimum size=.5cm,inner sep=1pt](9) at ($(2)!.5!(7) - (0,1.25)$)  [scale=2]{$9$};

            \draw [line width=2pt,-] (1) -- (2);
            \draw [line width=2pt,-] (1) -- (7);
            \draw [line width=2pt,-] (2) -- (3);
            \draw [line width=2pt,-] (2) -- (9);
            \draw [line width=2pt,-] (3) -- (4);
            \draw [line width=2pt,-] (3) -- (8);
            \draw [line width=2pt,-] (4) -- (5);
            \draw [line width=2pt,-] (5) -- (6);
            \draw [line width=2pt,-] (6) -- (7);
            \draw [line width=2pt,-] (6) -- (8);
            \draw [line width=2pt,-] (7) -- (9);		
		
		\end{tikzpicture}
	\end{minipage}
	\begin{minipage}[c]{0.4\textwidth}
		\centering
		\small
		\begin{equation*}
		\begin{gathered}
		(1, 2, 3), \ (1, 2, 3, 4), \ (1, 2, 3, 4, 5), \ (1, 7, 6), \ (1, 7, 6, 8), \\ (1, 2, 9), \ (2, 3, 4), \ (2, 3, 4, 5), \ (2, 9, 7, 6), \ (2, 9, 7), \ (2, 3, 8), \\ (3, 4, 5), \ (3, 8, 6), \ (3, 2, 9, 7), \ (3, 2, 9), \ (4, 5, 6), \\ (4, 5, 6, 7), \ (4, 5, 6, 8), \ (4, 5, 6, 7, 9), \ (5, 6, 7), \\ (5, 6, 8), \ (5, 6, 7, 9), \ (6, 7, 9), \ (7, 6, 8), \ (8, 3, 2, 9)
		\end{gathered}
		\end{equation*}
	\end{minipage}
	\\
	\vspace{5mm}
	\begin{minipage}{0.45\textwidth}
		\small
		\begin{equation*}
		\begin{split}
w_{1,2} + w_{2,3} + w_{3,4} + w_{4,5}& \leq w_{1,7} + w_{6,7} + w_{5,6}\\
w_{4,5} + w_{5,6} + w_{6,7} + w_{7,9}& \leq  w_{2,3} + w_{3,4} + w_{2,9}\\
w_{1,7} + w_{6,7} + w_{6,8}& \leq w_{1,2} + w_{2,3} + w_{3,8}\\
w_{2,3} + w_{2,9} + w_{3,8}& \leq w_{6,8} +w_{6,7} + w_{7,9}
		\end{split}
		\end{equation*}
	\end{minipage}
	$\hspace{15mm}\implies $
	\begin{minipage}{0.2\textwidth}
		\small
		$$w_{4,5} \leq 0$$
	\end{minipage}
	\begin{center}
		\line(1,0){400}
	\end{center}
	\begin{minipage}[c]{0.4\textwidth}
		\centering
		\begin{tikzpicture}[scale=0.35, every node/.style={scale=0.35}]

		\node[draw,circle,minimum size=.5cm,inner sep=1pt] (1) at (0*360/6 +180: 5cm) [scale =2] {$1$};
		\node[draw,circle,minimum size=.5cm,inner sep=1pt] (2) at (3*360/6 +180: 5cm) [scale =2] {$2$};
		\node[draw,circle,minimum size=.5cm,inner sep=1pt] (3) at (1*360/6 +180: 5cm) [scale =2] {$3$};
		\node[draw,circle,minimum size=.5cm,inner sep=1pt] (4) at (2*360/6 +180: 5cm)  [scale =2]{$4$};
		\node[draw,circle,minimum size=.5cm,inner sep=1pt] (5) at (5*360/6 +180: 5cm)  [scale =2]{$5$};
		\node[draw,circle,minimum size=.5cm,inner sep=1pt] (6) at (4*360/6 +180: 5cm)  [scale =2]{$6$};
		\node[draw,circle,minimum size=.5cm,inner sep=1pt] (7) at ($(1)!.25!(2)$)  [scale =2]{$7$};
		\node[draw,circle,minimum size=.5cm,inner sep=1pt] (8) at ($(1)!.5!(2)$)  [scale =2]{$8$};
		\node[draw,circle,minimum size=.5cm,inner sep=1pt] (9) at ($(1)!.75!(2)$)  [scale =2]{$9$};

		\draw [line width=2pt,-] (1) -- (3);
		\draw [line width=2pt,-] (1) -- (5);
		\draw [line width=2pt,-] (1) -- (7);
		\draw [line width=2pt,-] (2) -- (4);
		\draw [line width=2pt,-] (2) -- (6);
		\draw [line width=2pt,-] (2) -- (9);
		\draw [line width=2pt,-] (3) -- (4);
		\draw [line width=2pt,-] (5) -- (6);
		\draw [line width=2pt,-] (7) -- (8);
		\draw [line width=2pt,-] (8) -- (9);

		\end{tikzpicture}
	\end{minipage}
	\begin{minipage}[c]{0.4\textwidth}
		\centering
		\small
		\begin{equation*}
		\begin{gathered}
		(1,7,8,9,2), \  (1,3,4),  \ (1,5,6),  \ (1,7,8), \ (1,7,8,9), \\
		(2,4,3), \  (2,6,5),\ (2,9,8,7), \ (2,9,8),   \\
		(3,1,5), \ (3,1,5,6), \ (3,1,7), \ (3,1,7,8), \ (3,4,2,9), \ (4,2,6,5), \ (4,2,6), \\
		(4,3,1,7), \ (4,3,1,7,8), \ (4,2,9), \ (5,1,7), \ (5,1,7,8), \\
		(5,1,7,8,9), \ (6,2,9,8,7), \ (6,2,9,8), \ (6,2,9),  \ (7,8,9)
		\end{gathered}
		\end{equation*}
	\end{minipage}
	\\
	\vspace{5mm}
	\begin{minipage}{0.45\textwidth}
		\small
		\begin{equation*}
		\begin{split}
		w_{2,6} + w_{2,9} + w_{8,9} + w_{7,8} & \leq w_{5,6} + w_{1,5} + w_{1,7}\\
		w_{3,4} + w_{1,3} + w_{1,7} + w_{7,8} & \leq w_{2,4} + w_{2,9} + w_{8,9}\\
		w_{1,5} + w_{1,7} + w_{7,8} + w_{8,9} & \leq w_{5,6} + w_{2,6} + w_{2,9}\\
		w_{2,4} + w_{2,6} + w_{5,6}  & \leq w_{3,4} + w_{1,3} + w_{1,5}\\
		w_{1,3} + w_{1,5} + w_{5,6}  & \leq w_{3,4} + w_{2,4} + w_{2,6}\\
		w_{3,4} + w_{2,4} + w_{2,9}  & \leq w_{1,3} + w_{1,7} + w_{7,8} + w_{8,9}
		\end{split}
		\end{equation*}
	\end{minipage}
	$\hspace{15mm}\implies $
	\begin{minipage}{0.2\textwidth}
		\small
		$$w_{7,8} \leq 0$$
	\end{minipage}
	\begin{center}
		\line(1,0){400}
	\end{center}
\begin{minipage}[c]{0.4\textwidth}
		\centering
		\begin{tikzpicture}[scale=0.38, every node/.style={scale=0.38}]
            \node[draw,circle,minimum size=.5cm,inner sep=1pt] (1) at (0*360/6 +30: 5cm) [scale=2]{$1$};
            \node[draw,circle,minimum size=.5cm,inner sep=1pt] (2) at (1*360/6 +30: 5cm) [scale=2]{$2$};
            \node[draw,circle,minimum size=.5cm,inner sep=1pt] (3) at (2*360/6 +30: 5cm) [scale=2]{$3$};
            \node[draw,circle,minimum size=.5cm,inner sep=1pt](4) at (3*360/6 +30: 5cm) [scale=2]{$4$};
            \node[draw,circle,minimum size=.5cm,inner sep=1pt](5) at (4*360/6 +30: 5cm) [scale=2]{$5$};
            \node[draw,circle,minimum size=.5cm,inner sep=1pt] (6) at (5*360/6 +30: 5cm) [scale=2]{$6$};
            \node[draw,circle,minimum size=.5cm,inner sep=1pt](7) at ($(4)!.5!(6)$) [scale=2]{$7$};
            \node[draw,circle,minimum size=.5cm,inner sep=1pt] (8) at (0,0) [scale=2]{$8$};
            \node[draw,circle,minimum size=.5cm,inner sep=1pt](9) at ($(2)!.33!(8)$) [scale=2]{$9$};
            \node[draw,circle,minimum size=.5cm,inner sep=1pt] (10) at ($(2)!.66!(8)$) [scale=2]{$10$};

            \draw [line width=2pt,-] (1) -- (2);
            \draw [line width=2pt,-] (1) -- (6);
            \draw [line width=2pt,-] (1) -- (8);
            \draw [line width=2pt,-] (2) -- (3);
            \draw [line width=2pt,-] (2) -- (9);
            \draw [line width=2pt,-] (3) -- (4);
            \draw [line width=2pt,-] (3) -- (8);
            \draw [line width=2pt,-] (4) -- (5);
            \draw [line width=2pt,-] (4) -- (7);
            \draw [line width=2pt,-] (5) -- (6);
            \draw [line width=2pt,-] (6) -- (7);
            \draw [line width=2pt,-] (8) -- (10);
            \draw [line width=2pt,-] (9) -- (10);	
		
		\end{tikzpicture}
	\end{minipage}
	\begin{minipage}[c]{0.4\textwidth}
		\centering
		\small
		\begin{equation*}
		\begin{gathered}
		(1, 2, 3), \ (1, 2, 3, 4), \ (1, 6, 5), \ (1, 6, 7), \ (1, 2, 9), \\ (1, 2, 9, 10), \ (2, 3, 4), \ (2, 1, 6, 5), \ (2, 1, 6), \ (2, 1, 6, 7), \\ (2, 3, 8), \ (2, 9, 10), \ (3, 4, 5), \ (3, 4, 5, 6), \ (3, 4, 7), \\ (3, 8, 10, 9), \ (3, 8, 10), \ (4, 5, 6), \ (4, 3, 8), \ (4, 3, 8, 10, 9), \\ (4, 3, 8, 10), \ (5, 4, 7), \ (5, 6, 1, 8), \ (5, 6, 1, 2, 9), \\ (5, 6, 1, 2, 9, 10), \ (6, 1, 8), \ (6, 1, 2, 9), \ (6, 1, 2, 9, 10), \\ (7, 4, 3, 8), \ (7, 6, 1, 2, 9), \ (7, 6, 1, 2, 9, 10), \ (8, 10, 9)
		\end{gathered}
		\end{equation*}
	\end{minipage}
	\\
	\vspace{5mm}
	\begin{minipage}{0.45\textwidth}
		\small
		\begin{equation*}
		\begin{split}
  w_{6,7}+ w_{1,6}  +w_{1,2} + w_{2,9} + w_{9,10}& \leq   w_{4,7} +w_{3,4}+ w_{3,8} + w_{8,10}\\
  w_{3,8} + w_{8,10} + w_{9,10}& \leq w_{2,3} + w_{2,9}\\
  w_{3,4} + w_{4,5} + w_{5,6}& \leq  w_{2,3}+w_{1,2} + w_{1,6}\\
   w_{5,6}+w_{1,6} + w_{1,8}& \leq w_{4,5}+w_{3,4} + w_{3,8}\\
w_{1,2} + w_{2,3} + w_{3,4}& \leq w_{1,6} + w_{5,6} + w_{4,5}\\
w_{2,3} + w_{3,8}& \leq w_{1,2} + w_{1,8}\\
w_{4,5} + w_{4,7}& \leq w_{5,6} + w_{6,7}
		\end{split}
		\end{equation*}
	\end{minipage}
	$\hspace{25mm}\implies $
	\begin{minipage}{0.2\textwidth}
		\small
		$$w_{9,10} \leq 0$$
	\end{minipage}
	\begin{center}
		\line(1,0){400}
	\end{center}
 \newpage
 \printbibliography
\end{document}